\documentclass[12pt,a4paper,reqno]{amsart}
\usepackage{amsfonts,amsthm,amsmath,amssymb,
	amsbsy,euscript,textcomp,url,stmaryrd} 



\setlength{\headheight}{32pt}
\setlength{\headsep}{29pt}
\setlength{\footskip}{28pt}
\setlength{\textwidth}{444pt}
\setlength{\textheight}{636pt}
\setlength{\marginparsep}{7pt}
\setlength{\marginparpush}{7pt}
\setlength{\oddsidemargin}{4.5pt}
\setlength{\marginparwidth}{55pt}
\setlength{\evensidemargin}{4.5pt}
\setlength{\topmargin}{-15pt}
\setlength{\footnotesep}{8.4pt}
\setcounter{tocdepth}{1}

\newtheorem{theor}{Theorem}

\theoremstyle{definition}

\newtheorem{proposition}[theor]{Proposition}
\newtheorem{lemma}[theor]{Lemma}
\newtheorem{cor}[theor]{Corollary}
\newtheorem{define}{Definition}

\newtheorem{problem}{Problem}
\newtheorem{open}[problem]{Open problem}
\newtheorem{property}{Property}
\newtheorem{example}{Example}
\theoremstyle{remark}
\newtheorem{rem}{Remark}


\newcommand{\pinner}{\mathbin{\mathchoice
{\hbox{\vrule width0.6em depth0pt height0.4pt
	\vrule width0.4pt depth0pt height0.8ex}}
{\hbox{\vrule width0.6em depth0pt height0.4pt
	\vrule width0.4pt depth0pt height0.8ex}}
{\hbox{\kern0.14em
	\vrule width0.48em depth0pt height0.4pt
	\vrule width0.4pt depth0pt height0.6ex\kern0.14em}}
{\hbox{\kern0.1em
	\vrule width0.39em depth0pt height0.4pt
	\vrule width0.4pt depth0pt height0.5ex\kern0.1em}}}}

\newcommand{\BBR}{\mathbb{R}}


\newcommand{\bcP}{{\boldsymbol{\mathcal{P}}}}
\newcommand{\bcQ}{{\boldsymbol{\mathcal{Q}}}}

\newcommand{\cP}{\mathcal{P}}\newcommand{\cR}{\mathcal{R}}
\newcommand{\cQ}{\mathcal{Q}}

\newcommand{\cX}{{\EuScript X}}    
\newcommand{\cY}{{\EuScript Y}}    
\newcommand{\cZ}{{\EuScript Z}}    

\newcommand{\bx}{{\boldsymbol{x}}}

\newcommand{\veps}{\varepsilon}

\newcommand{\dd}{\partial}
\newcommand{\Id}{{\mathrm d}}

\DeclareMathOperator{\const}{const}

\DeclareMathOperator{\Jac}{Jac}

\newcommand{\lshad}{[\![}
\newcommand{\rshad}{]\!]}

\newcommand{\schouten}[1]{\lshad {#1} \rshad}

\newcommand{\by}[1]{\textit{{#1}}}
\newcommand{\jour}[1]{\textit{{#1}}}
\newcommand{\vol}[1]{\textbf{{#1}}}
\newcommand{\book}[1]{\textrm{{#1}}}

\newcommand{\ground}[1]{\text{\textit{\small #1}}}

\newcommand*{\vcenteredhbox}[1]{\begingroup
\setbox0=\hbox{#1}\parbox{\wd0}{\box0}\endgroup}

\DeclareSymbolFont{extraup}{U}{zavm}{m}{n}
\DeclareMathSymbol{\varheartsuit}{\mathalpha}{extraup}{86}
\DeclareMathSymbol{\vardiamondsuit}{\mathalpha}{extraup}{87}

\title[Universal infinitesimal deformation of Poisson structures]{The Kontsevich tetrahedral flow revisited}
\author[A.~Bouisaghouane]{A.~Bouisaghouane} 
\author[R.~Buring]{R.~Buring}
\author[A.~V.~Kiselev]{A.~
	Kiselev${}^{*,\S}$}
\thanks{\textit{Address}: Johann Ber\-nou\-lli Institute for Mathematics and Computer Science, University of Groningen,
	P.O.~Box 407, 9700~AK Groningen, The Netherlands. 
	\quad${}^{*}$\textit{E-mail}: \texttt{A.V.Kiselev\symbol{"40}rug.nl}
\\
\mbox{ }\quad${}^{*}$ Institut des Hautes $\smash{\text{\'Etudes}}$ Scientifiques, 
35~route de Chart\-res, Bures\/-\/sur\/-\/Yvette, \mbox{F-91440} France.%
\\
\mbox{ }\quad${}^{\S}$ \textit{Present address}: Max Planck Institute for Mathematics,
Vi\-vats\-gas\-se~7, \mbox{D-53111} Bonn, Germany}


\date{24 November 2016, revised 12 March 2017} 

\subjclass[2010]{
53D55, 
%
58E30, 
81S10; 
secondary
53D17, 
58Z05, 
70S20.
}

\keywords{Poisson bracket, affine manifold, graph complex, 
tetrahedral 
flow, Poisson cohomology}

\begin{document}
\begin{abstract}
We prove that the Kontsevich tetrahedral flow $\dot{\mathcal{P}}=\mathcal{Q}_{a:b}(\mathcal{P})$,
the right\/-\/hand side of which is a linear combination of two
differential monomials of degree four in a bi\/-\/vector~$\mathcal{P}$ on an 
affine real Poisson manifold~$N^n$, does infinitesimally preserve the
space of Poisson bi\/-\/vectors on~$N^n$ if and only if the two
monomials in $\mathcal{Q}_{a:b}(\mathcal{P})$ are balanced by the ratio $a:b=1:6$. The proof is explicit; it is written in the language of Kontsevich graphs.
\end{abstract}
\maketitle


\subsection*{Introduction}
\noindent%
The main question which we address in this paper is how Poisson structures can be deformed in such a way that they stay Poisson.
We reveal one such method that works for all Poisson structures on affine real manifolds; the construction 
of that flow on the space of bi\/-\/vectors was proposed in~\cite{Ascona96}: 
the formula is derived from two differently oriented tetrahedral graphs on 
four vertices.
The flow is a linear com\-bi\-na\-ti\-on of two terms, each quartic\/-\/nonlinear in the Poisson structure.
By using several examples of Poisson brackets with high polynomial degree coefficients, 
the first and last authors demonstrated in~\cite{tetra16} that the ratio~$1:6$ is the only possible balance at which the tetrahedral flow can preserve the property of the Cauchy datum to be Poisson.
But does the Kontsevich te\-tra\-he\-d\-ral flow $\dot{\cP}=\cQ_{1:6}(\cP)$ with ratio~$1:6$ actually preserve the space of \emph{all} Poisson bi\/-\/vectors? 

We prove the infinitesimal version of this claim: namely, we show that $\lshad \cP,\cQ_{1:6}(\cP)\rshad=0$ for every bi\/-\/vector~$\cP$ satisfying the master\/-\/equation~$\lshad\cP,\cP\rshad=0$ for Poisson structures.
The proof is graphical: 
to prove that equation \eqref{EqWhichMechanism} holds, we find an operator $\Diamond$, encoded by using the Kontsevich graphs, that solves
%
%
equation~\eqref{EqFactor}. 
%
We also show that there is no universal mechanism (that would involve the language of Kontsevich graphs) for the tetrahedral flow to be trivial in the respective Poisson cohomology.

The text is structured as follows.
In section \ref{SecPrb} we recall 
how oriented graphs can be used to encode differential operators acting on the space 
of multivectors.
In particular, differential polynomials in a given Poisson structure are obtained 
as soon as a copy of that Poisson bi\/-\/vector is placed in every internal vertex of a graph.
Specifically, the right\/-\/hand side $\cQ_{a:b}=a\cdot\Gamma_1+b\cdot\Gamma_2$ 
of the Kontsevich tetrahedral flow $\dot{\cP}=\cQ_{a:b}(\cP)$ on the space of bi\/-\/vectors on an affine Poisson manifold $\bigl(N^{n},\cP\bigr)$ is a linear combination 
of two differential monomials, $\Gamma_1(\cP)$ and~$\Gamma_2(\cP)$,
of degree four in the bi\/-\/vector~$\cP$ that evolves. 

We determine 
at which balance $a:b$ the Kontsevich tetrahedral flow $\dot{\cP}=\cQ_{a:b}(\cP)$ infinitesimally preserves the space of Poisson bi\/-\/vectors, that is, 
the bi\/-\/vector $\cP + \veps\cQ_{a:b}(\cP)+\bar{o}(\veps)$ satisfies the equation 
\begin{equation}\label{Eq1stOrder}
\schouten{\cP + \veps\cQ_{a:b}(\cP)+\bar{o}(\veps),\cP + \veps\cQ_{a:b}(\cP)+\bar{o}(\veps)} \doteq \bar{o}(\veps) \qquad \text{via} \   \schouten{\cP,\cP}=0;
\end{equation}
here we denote by $\schouten{\cdot,\cdot}$ the Schouten bracket 
(see formula~\eqref{EqSchouten} on page~\pageref{EqSchouten}). 
Expanding, we obtain the cocycle condition,
\begin{equation}\label{EqWhichMechanism}
\schouten{\cP, \cQ_{a:b}(\cP)} \doteq 0 \qquad \text{via} \ 
\schouten{\cP,\cP} = 0,
\end{equation}
with respect to the Poisson differential $\boldsymbol{\dd}_{\cP}=\schouten{\cP,\cdot}$. Viewed as an equation with respect to the ratio~$a:b$, condition~\eqref{EqWhichMechanism} is the main object of our study.

Recent counterexamples~\cite{tetra16} 
show that the bi\/-\/vector $\cP+\varepsilon\cQ_{a:b}(\cP)+\bar{o}(\varepsilon)$ can stay 
Poisson \emph{only if} the balance~$a:b$ in~$\cQ_{a:b}$ is equal to~$1:6$. 
We now prove the infinitesimal part of sufficiency:
the deformation $\cP+\varepsilon\cQ_{1:6}(\cP)+\bar{o}(\varepsilon)$ is always infinitesimally Poisson, whence 
the balance $a:b=1:6$ in the Kontsevich tetrahedral flow is universal
for all Poisson bi\/-\/vectors~$\cP$ on all affine manifolds~$N^{n}$.
The proof is explicit: in section~\ref{SecProof} we reveal the mechanism of factorization --\,via the Jacobi identity\,-- in~\eqref{EqWhichMechanism} at $a:b=1:6$. 
On the left\/-\/hand side of 
factorization problem~\eqref{EqWhichMechanism} we expand the 
Poisson differential of the Kontsevich tetrahedral flow at the balance $1 : 6$ into the sum of 39~graphs (see Figure~\ref{FigLHS} on page~\pageref{FigLHS} and Table~\ref{TableLHS} in Appendix~\ref{AppCode}).
On the other side of that factorization, we take the sum that runs 
with undetermined coefficients
over all those fragments of differential consequences of the Jacobi identity $\schouten{\cP,\cP} = 0$ which are known to vanish independently. 
We then find a linear polydifferential operator $\Diamond(\cP,\cdot)$ that acts on the filtered components 
of the Jacobiator $\text{Jac}\,(\cP)\mathrel{{:}{=}} \schouten{\cP,\cP}$ for the bi-vector~$\cP$; the operator~$\Diamond$ provides the factorization $\schouten{\cP,\cQ_{1:6}(\cP)}(f,g,h)=\Diamond\bigl(\cP,\text{Jac}\,(\cP)(\cdot,\cdot,\cdot)\bigr)(f,g,h)$ of the $\boldsymbol{\dd}_{\cP}$-\/cocycle condition, see~\eqref{EqWhichMechanism}, 
through the Jacobi identity $\text{Jac}\,(\cP)=0$. 
To describe the differential operators that produce such 
consequences of the Jacobi identity, we use the pictorial language of graphs: every internal vertex contains a copy of the bi\/-\/vector~$\cP$ and the operators are reduced by using its skew\/-\/sym\-met\-ry. 
There remain $7,025$~graphs, the coefficients of which are linear in the unknowns. We now solve the arising inhomogeneous linear algebraic system. Its solution yields a 
polydifferential operator~$\Diamond$, encoded using Leibniz graphs (see p.\:\pageref{pSolution}), that provides the sought\/-\/for factorization $\schouten{\cP,\cQ_{1:6}} = \Diamond(\cP,\Jac(\cP))$. It is readily seen from formula~\eqref{EqSol} that this 
operator~$\Diamond$ is completely determined by only $8$~nonzero coefficients (out of~$1132$ total).
\footnote{The maximally detailed description of that solution $\Diamond$ is contained in Appendix~\ref{AppCode}.} 
Therefore, although finding an operator~$\Diamond$ was hard, verifying that it does solve the factorization problem has become almost immediate, as we show in the proof of Theorem~\ref{ThMain}. We thereby establish the main result (namely, Corollary~\ref{CorMain} on page~\pageref{CorMain}).
The paper concludes with the 
formulation of an open problem about the integration of tetrahedral flow in~\eqref{Eq1stOrder}
to higher order expansions in~$\veps$, see~\eqref{EqMasterHighOrder} on p.~\pageref{EqMasterHighOrder}.

In Appendix~\ref{AppPerturb} we outline a different method to tackle the factorization problem, namely, by making the Jacobi identity visible in~\eqref{EqWhichMechanism} by perturbing the original structure~$\cP\mapsto \widetilde{\cP}$ 
in such a way that 
$\widetilde{\cP}$ is not Poisson and $\cQ_{1:6}(\widetilde{\cP})\neq 0$. Hence 
$\widetilde{\cP}$ contributes to the right\/-\/hand side of~\eqref{EqWhichMechanism} such that the respectively perturbed bi\/-\/vec\-tor~$\cQ_{1:6}(\widetilde{\cP})$ stops being compatible 
with the perturbed Poisson structure $\widetilde{\cP}$. The first\/-\/order balance of both sides of perturbed equation~\eqref{EqWhichMechanism} then suggests the coefficients of those differential consequences of the Jacobiatior which are actually involved in the factorization mechanism. The coefficients of operators realized by graphs which were found by following this scheme are reproduced in the full run\/-\/through that gave us the solution~$\Diamond$ in section~\ref{SecProof}.

\section{The main problem: From graphs to multivectors}
\label{SecPrb}
	
\subsection{The language of graphs} 
\label{SecGraphOp}
Let us formalise a way to encode polydifferential operators -- in particular multivectors -- using oriented graphs~\cite{MKParisECM,MKZurichICM}.
In an affine real manifold $N^n$ (here $2\leqslant n < \infty$), take a chart $U_\alpha \hookrightarrow \mathbb{R}^n$ and denote the Cartesian coordinates 
by $\bx = (x^1,\ldots,x^n)$.
We now consider only the oriented graphs whose vertices are either tails for an ordered pair of arrows, each decorated with its own index, or sinks (with no issued edges) like the vertices $\ground{1, 2}$ in $(\ground{1}) \xleftarrow{\ i \ } \bullet \xrightarrow{\ j \ } (\ground{2})$. 
The ar\-row\-tail vertices are called \emph{internal}. Every internal vertex $\bullet$ carries a copy of a given Poisson bi-vector $\cP = \cP^{ij}(\bx)\, \partial_i \wedge \partial_j$ with its own pair of indices. For each internal vertex~$\bullet$, the pair of out\/-\/going edges is ordered L~$\prec$~R. The ordering L~$\prec$~R of decorated out\/-\/going edges coincides with the ordering
``first${}\prec{}$se\-c\-ond'' of the indexes in the coefficients of~$\cP$.
Namely, the left edge (L) carries the first index and the other edge (R) carries the second index. By definition, the decorated edge $\bullet \xrightarrow{\ i\ } \bullet$ denotes at once the derivation ${\partial}/{\partial x^{i}}\equiv \partial_{i}$ (that acts on the content of the arrowhead vertex) and the summation $\sum_{i=1}^{n}$ (over the index~$i$ in 
the object which is contained within the ar\-row\-tail vertex).
As it has been explained in~\cite{dq15,KontsevichFormality}, the operator which every graph encodes is equal to the sum (running over all the indexes) of products (running over all the vertices) of those vertices content (differentiated by the in-coming arrows, if any).
Moreover, we let the sinks be ordered (like $\mathit{1}\prec\mathit{2}$ above), so that every such graph defines a polydifferential operator: its arguments are thrown into the respective sinks.

\begin{example}
The \emph{wedge graph} $(\ground{1})
\xleftarrow[L]{\ i\ } \cP^{ij}(\bx) \xrightarrow[R]{\ j\ } (\ground{2})
$ encodes the bi-differential operator $\sum_{i,j=1}^{n}(\ground{1}) \overleftarrow{\partial_{i}} \! \cdot \! \cP^{ij}(\bx) \! \cdot \! \overrightarrow{\partial_{j}}(\ground{2})$.
Such graph
specifies a Poisson bracket (on every chart~$U_\alpha \subseteq N^n$)
if it 
satisfies the Jacobi identity, see~\eqref{EqJacFig} below.
\end{example}

\begin{rem}
In principle, we allow the presence of both the tadpoles and cycles over two vertices (or ``eyes"), see Fig.~\ref{FigTadpoleEye}.
However, in hindsight there will be neither tadpoles nor eyes in the solution which we shall have found in section \ref{SecProof} below.
\begin{figure}[htb]
\unitlength=1mm
\special{em:linewidth 0.4pt}
\linethickness{0.4pt}
\begin{picture}(25.00,15.00)
\put(10.00,10.00){\circle{10.00}}
\put(15.00,10.00){\circle*{1}}
\put(14.85,11.15){\vector(0,-1){1.00}}
\put(15.00,10.00){\vector(1,0){8.00}}
\end{picture}
\quad
\unitlength=1mm
\special{em:linewidth 0.4pt}
\linethickness{0.4pt}
\begin{picture}(40.00,20.00)
\put(10.00,10.00){\circle*{1}}
\put(30.00,10.00){\circle*{1}}
\bezier{112}(10.00,10.00)(20.00,20.00)(30.00,10.00)
\bezier{112}(10.00,10.00)(20.00,0.00)(30.00,10.00)
\bezier{48}(0.00,15.00)(5.00,15.00)(10.00,10.00)
\bezier{48}(30.00,10.00)(35.00,5.00)(40.00,5.00)
\put(28.9,11.1){\vector(1,-1){1.00}}
\put(11.1,8.9){\vector(-1,1){1.00}}
\end{picture}
\caption{A tadpole and an ``eye''.}
\label{FigTadpoleEye}
\end{figure}
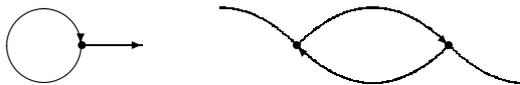
\end{rem}



\begin{rem}\label{RemGamma0}
Under the above assumptions, there exist inhabited graphs that encode zero differential operators.
Namely, consider the graph with a double edge:
\[
\vcenteredhbox{
\unitlength=1mm
\special{em:linewidth 0.4pt}
\linethickness{0.4pt}
\begin{picture}(22.67,20.00)
\put(0.00,10.00){\circle*{1}}
\put(-3.50,9.00){$\boldsymbol{a}$}
\put(20.00,10.00){\circle*{1}}
\put(21.00,9.00){$z$}
\bezier{212}(0.00,10.00)(10.00,20.00)(20.00,10.00)
\put(10.00,16.30){\makebox(0,0)[rb]{\tiny$i$}}
\put(10.80,11.70){\makebox(0,0)[rb]{\tiny$L$}}
\bezier{212}(0.00,10.00)(10.00,0.00)(20.00,10.00)
\put(10.00,1.30){\makebox(0,0)[rb]{\tiny$j$}}
\put(11.00,6.30){\makebox(0,0)[rb]{\tiny$R$}}
\put(18.67,11.33){\vector(1,-1){1}}
\put(18.67,8.67){\vector(1,1){1}}
\end{picture}
}
=
\vcenteredhbox{
\unitlength=1mm
\special{em:linewidth 0.4pt}
\linethickness{0.4pt}
\begin{picture}(22.67,20.00)
\put(0.00,10.00){\circle*{1}}
\put(-3.50,9.00){$\boldsymbol{a}$}
\put(20.00,10.00){\circle*{1}}
\put(21.00,9.00){$z$}
\bezier{212}(0.00,10.00)(10.00,20.00)(20.00,10.00)
\put(10.00,16.00){\makebox(0,0)[rb]{\tiny$j$}}
\put(10.80,11.70){\makebox(0,0)[rb]{\tiny$L$}}
\bezier{212}(0.00,10.00)(10.00,0.00)(20.00,10.00)
\put(10.00,1.60){\makebox(0,0)[rb]{\tiny$i$}}
\put(11.00,6.30){\makebox(0,0)[rb]{\tiny$R$}}
\put(18.67,11.33){\vector(1,-1){1}}
\put(18.67,8.67){\vector(1,1){1}}
\end{picture}
}
=
-\vcenteredhbox{
\unitlength=1mm
\special{em:linewidth 0.4pt}
\linethickness{0.4pt}
\begin{picture}(22.67,20.00)
\put(0.00,10.00){\circle*{1}}
\put(-3.50,9.00){$\boldsymbol{a}$}
\put(20.00,10.00){\circle*{1}}
\put(21.00,9.00){$z$}
\bezier{212}(0.00,10.00)(10.00,20.00)(20.00,10.00)
\put(10.00,16.30){\makebox(0,0)[rb]{\tiny$j$}}
\put(10.80,11.70){\makebox(0,0)[rb]{\tiny$R$}}
\bezier{212}(0.00,10.00)(10.00,0.00)(20.00,10.00)
\put(10.00,1.30){\makebox(0,0)[rb]{\tiny$i$}}
\put(11.00,6.30){\makebox(0,0)[rb]{\tiny$L$}}
\put(18.67,11.33){\vector(1,-1){1}}
\put(18.67,8.67){\vector(1,1){1}}
\end{picture}
}
\stackrel{\text{flip}}{=}
-\vcenteredhbox{
\unitlength=1mm
\special{em:linewidth 0.4pt}
\linethickness{0.4pt}
\begin{picture}(22.67,20.00)
\put(0.00,10.00){\circle*{1}}
\put(-3.50,9.00){$\boldsymbol{a}$}
\put(20.00,10.00){\circle*{1}}
\put(21.00,9.00){$z$.}
\bezier{212}(0.00,10.00)(10.00,20.00)(20.00,10.00)
\put(10.00,16.30){\makebox(0,0)[rb]{\tiny$i$}}
\put(10.80,11.70){\makebox(0,0)[rb]{\tiny$L$}}
\bezier{212}(0.00,10.00)(10.00,0.00)(20.00,10.00)
\put(10.00,1.30){\makebox(0,0)[rb]{\tiny$j$}}
\put(11.00,6.30){\makebox(0,0)[rb]{\tiny$R$}}
\put(18.67,11.33){\vector(1,-1){1}}
\put(18.67,8.67){\vector(1,1){1}}
\end{picture}
}
\]
By first relabelling the summation indices and then swapping L~$\rightleftarrows$~R (and redrawing) we evaluate the operator acting at $z$ to $\sum_{i,j=1}^n a^{ij} \partial_i \partial_j(z)
= -\sum_{i,j=1}^n a^{ij} \partial_i \partial_j (z)$; whence the operator is zero.
In the same way, any graph containing a double edge encodes a zero operator.
Graphs can also encode zero differential operators in a more subtle way.
For example consider the wedge on two wedges:
\begin{equation}\label{EqWedgeOnTwoWedges}
\vcenteredhbox{
\unitlength=1mm
\special{em:linewidth 0.4pt}
\linethickness{0.4pt}
\begin{picture}(31.00,25.67)(0,3)
\put(15.00,25.00){\vector(1,0){10.00}}
\put(25.00,25.00){\vector(-1,-3){5.00}}
\put(25.00,25.00){\vector(1,-3){5.00}}
\put(25.00,15.00){\vector(1,-1){4.67}}
\put(25.00,15.00){\vector(-1,-1){4.67}}
\put(15.00,25.00){\line(1,-1){6.67}}
\put(23,17){\vector(1,-1){2}}
\bezier{16}(23.00,17.00)(24.00,16.00)(25.00,15.00)
\put(15.00,25.00){\circle*{1}}
\put(25.00,25.00){\circle*{1}}
\put(25.00,15.00){\circle*{1}}
\put(30.33,9.67){\circle*{1}}
\put(19.67,9.67){\circle*{1}}
\put(13.33,25.00){\makebox(0,0)[rc]{3}}
\put(26.67,25.00){\makebox(0,0)[lc]{2}}
\put(20.00,25.67){\makebox(0,0)[cb]{\tiny$R$}}
\put(17.67,21.00){\makebox(0,0)[ct]{\tiny$L$}}
\put(25.00,13.33){\makebox(0,0)[ct]{1}}
\put(20.00,8){\makebox(0,0)[ct]{$f$}}
\put(30.00,7.67){\makebox(0,0)[ct]{$g$}}
\end{picture}
\raisebox{14mm}{$=$ \hskip 1mm $0$.}
}
\end{equation}
Swapping the labels $1\rightleftarrows2$ of the lower wedges does not change the operator.
On the other hand, doing the same in a different way, namely, by swapping `left' and `right' in the top wedge introduces a minus sign.
Hence the graph encodes a differential operator equal to minus itself, i.e.~zero.
Proving that a graph which contains the left\/-\/hand side 
of~\eqref{EqWedgeOnTwoWedges} as a subgraph equals zero is an elementary exercise
(cf.\ Example~\ref{Gamma0InSol} on p.~\pageref{Gamma0InSol}).
\end{rem}

Besides the trivial vanishing mechanism in Remark~\ref{RemGamma0}, there is the 
Jacobi identity together with its differential consequences, which will play a key role in what follows.
For any three arguments 
$\mathit{1},\mathit{2},\mathit{3} \in C^\infty(N^n)$, the Jacobi identity $\Jac_{\cP}(\mathit{1},\mathit{2},\mathit{3})=0$ 
is realized\footnote{The notation $\Jac_{\cP}(\ground{1, 2, 3})$ is synonymic to~$\Jac(\cP)(\mathit{1}\otimes\mathit{2}\otimes\mathit{3})$.} by the graph
\begin{equation}\label{EqJacFig}
\vcenteredhbox{
\raisebox{3.3mm}
[6.5mm][3.5mm]{ 
\unitlength=1mm
\special{em:linewidth 0.4pt}
\linethickness{0.4pt}
\begin{picture}(12,15)
\put(0,-10){
\begin{picture}(12.00,15.00)
\put(0.00,10.00){\framebox(12.00,5.00)[cc]{$\bullet\ \bullet$}}
\put(2.00,10.00){\vector(-1,-3){1.33}}
\put(6.00,10.00){\vector(0,-1){4.00}}
\put(10.00,10.00){\vector(1,-3){1.33}}
\put(0.00,4.00){\makebox(0,0)[cb]{\tiny\it1}}
\put(6.00,4.00){\makebox(0,0)[cb]{\tiny\it2}}
\put(11.67,4.00){\makebox(0,0)[cb]{\tiny\it3}}
\end{picture}
}\end{picture}}}\ \ \ 
\mathrel{{:}{=}}
\text{\raisebox{-12pt}[25pt]{
\unitlength=0.70mm
\linethickness{0.4pt}
\begin{picture}(26.00,16.33)
\put(0.00,5.00){\line(1,0){26.00}}
\put(2.00,5.00){\circle*{1.33}}
\put(13.00,5.00){\circle*{1.33}}
\put(24.00,5.00){\circle*{1.33}}
\put(2.00,1.33){\makebox(0,0)[cc]{\tiny\it1}}
\put(13.00,1.33){\makebox(0,0)[cc]{\tiny\it2}}
\put(24.00,1.33){\makebox(0,0)[cc]{\tiny\it3}}
\put(7.33,11.33){\circle*{1.33}}
\put(7.33,11.33){\vector(1,-1){5.5}}
\put(7.33,11.33){\vector(-1,-1){5.5}}
\put(13,17){\circle*{1.33}}
\put(13,17){\vector(1,-1){11.2}}
\put(13,17){\vector(-1,-1){5.1}}
\put(3.00,10.00){\makebox(0,0)[cc]{\tiny$i$}}
\put(12.00,10.00){\makebox(0,0)[cc]{\tiny$j$}}
\put(24.00,10.00){\makebox(0,0)[cc]{\tiny$k$}}
\end{picture}
}}
{-}
\text{\raisebox{-12pt}[25pt]{
\unitlength=0.70mm
\linethickness{0.4pt}
\begin{picture}(26.00,16.33)
\put(0.00,5.00){\line(1,0){26.00}}
\put(2.00,5.00){\circle*{1.33}}
\put(13.00,5.00){\circle*{1.33}}
\put(24.00,5.00){\circle*{1.33}}
\put(2.00,1.33){\makebox(0,0)[cc]{\tiny\it1}}
\put(13.00,1.33){\makebox(0,0)[cc]{\tiny\it2}}
\put(24.00,1.33){\makebox(0,0)[cc]{\tiny\it3}}
\put(13,11.33){\circle*{1.33}}
\put(13,11.33){\vector(2,-1){10.8}}
\put(13,11.33){\vector(-2,-1){10.8}}
\put(18.5,17){\circle*{1.33}}
\put(18.5,17){\vector(-1,-1){5.2}}
\put(18.5,17){\vector(-1,-2){5.6}}
\put(13,15){\tiny $L$}
\put(17,12){\tiny $R$}
\put(4.00,10.00){\makebox(0,0)[cc]{\tiny$i$}}
\put(11.00,8.00){\makebox(0,0)[cc]{\tiny$j$}}
\put(22.00,10.00){\makebox(0,0)[cc]{\tiny$k$}}
\end{picture}
}}
{-}
\text{\raisebox{-12pt}[25pt]{
\unitlength=0.70mm
\linethickness{0.4pt}
\begin{picture}(26.00,16.33)
\put(0.00,5.00){\line(1,0){26.00}}
\put(2.00,5.00){\circle*{1.33}}
\put(13.00,5.00){\circle*{1.33}}
\put(24.00,5.00){\circle*{1.33}}
\put(2.00,1.33){\makebox(0,0)[cc]{\tiny\it1}}
\put(13.00,1.33){\makebox(0,0)[cc]{\tiny\it2}}
\put(24.00,1.33){\makebox(0,0)[cc]{\tiny\it3}}
\put(18.33,11.33){\circle*{1.33}}
\put(18.33,11.33){\vector(1,-1){5.5}}
\put(18.33,11.33){\vector(-1,-1){5.5}}
\put(13,17){\circle*{1.33}}
\put(13,17){\vector(-1,-1){11.2}}
\put(13,17){\vector(1,-1){5.1}}
\put(3.00,10.00){\makebox(0,0)[cc]{\tiny$i$}}
\put(13.00,10.00){\makebox(0,0)[cc]{\tiny$j$}}
\put(24.00,10.00){\makebox(0,0)[cc]{\tiny$k$}}
\end{picture}
}}
= 0. 
\end{equation}
%
%
In our notation 
this identity's left\/-\/hand side encodes a sum over all $(i,j,k)$; 
instead restricting to fixed $(i,j,k)$ corresponds to taking a coefficient of the differential operator (cf.\ Lemma~\ref{Lemma} below), which yields the respective component $\Jac_{\cP}^{ijk}$ of the Jacobiator~$\Jac(\cP)$.
Clearly, the Jacobiator 
is totally skew\/-\/symmetric with respect to its arguments~$\ground{1, 2, 3}$.

In fact, the Jacobiator~$\Jac(\cP)$ is the \emph{Schouten bracket} of a given Poisson bi\/-\/vector~$\cP$ with itself: $\Jac\,({\cP}) 
=\schouten{\cP,\cP} 
$ (depending on conventions, times a constant which is here omitted, cf.~\cite{RingersProtaras}). The 
bracket $\schouten{\cdot,\cdot}$ is a unique extension of the commutator $[\cdot,\cdot]$ on the space of vector fields $\cX^{1}(N^{n})$ to the space $\cX^{*}(N^{n})$ of multivector fields. Let us recall its inductive definition in the finite\/-\/dimensional set\/-\/up.
\begin{define}
The Schouten bracket $\lshad{\cdot},{\cdot}\rshad\colon \cX^{*}(N^{n})\times \cX^{*}(N^{n}) \to \cX^{*}(N^{n})$
coincides with the commutator~$[{\cdot},{\cdot}]$ 
when evaluated on $1$-vectors; when evaluated at a $p$-vector~$\cX$, $q$-vector~$\cY$, and $r$-vector~$\cZ$ for $p,q,r\geqslant 1$, the Schouten bracket 
is shifted\/-\/graded skew\/-\/symmetric,
$\schouten{\cX,\cY}=-(-1)^{(p-1)(q-1)}\schouten{\cY,\cX}$, and it works over each argument via the graded Leibniz rule:
$\schouten{\cX,\cY\wedge\cZ}=\schouten{\cX,\cY}\wedge\cZ+(-1)^{(p-1)q}\cY\wedge\schouten{\cX,\cZ}$. The bracket is then extended by linearity from homogeneous components to the entire space of multivector fields on~$N^n$. 
\end{define}

\begin{rem}
The construction of 
Schouten bracket also reads as follows.
Denote by~$\xi_{i}$ the parity\/-\/odd canonical conjugate of the variable~$x^{i}$ for every $i=1$,\ $\ldots$,\ $n$.
For instance, every bi\/-\/vector is 
realised in terms of local coordinates~$x^{i}$ and~$\xi_{i}$ on~$\Pi T^*N^n$ by using $\cP=\frac{1}{2}\langle \xi_{i}\,\cP^{ij}(\bx)\,\xi_{j}\rangle$.
The Schouten bracket $\schouten{\cdot,\cdot}$ is the parity\/-\/odd Poisson bracket which is locally determined on $\Pi T^{*}N^{n}$ 
by the canonical symplectic structure $\Id\bx\wedge \Id\boldsymbol{\xi}$. 
Our working formula~is\footnote{In the set\/-\/up of infinite jet spaces~$J^\infty(\pi)$ (see~\cite{Olver} and~\cite{gvbv,cycle14,dq15}) the four partial derivatives in formula~\eqref{EqSchouten} for~$\schouten{\cdot,\cdot}$ become the variational derivatives with respect to the same variables, which now pa\-ra\-met\-rise the fibres in the Whitney sum $\pi\mathbin{{\times}_{M^m}}\Pi\widehat{\pi}$ of 
(super-)\/bundles over the $m$-\/dimensional base~$M^m$.}
\begin{equation}\label{EqSchouten}
\schouten{\cP,\cQ} = (\cP){\frac{\overleftarrow{\partial}}{\partial x^{i}}}\cdot{\frac{\overrightarrow{\partial}}{\partial \xi_{i}}}(\cQ)
-(\cP){\frac{\overleftarrow{\partial}}{\partial \xi_{i}}}
\cdot{\frac{\overrightarrow{\partial}}{\partial x^{i}}}(\cQ).
\end{equation}
It is now readily seen that the Schouten bracket of homogeneous arguments
satisfies its own, shifted\/-\/graded Jacobi identity,
\[
\lshad \cX,\lshad \cY,\,\cdot\,\rshad\rshad(\cZ)-(-)^{(|\cX|-1)\cdot(|\cY|-1)}\lshad \cY,\lshad \cX,\,\cdot\,\rshad\rshad(\cZ)=
\lshad\lshad \cX,\cY\rshad,\,\cdot\,\rshad(\cZ).
\]
Hence for a 
bi\/-\/vector~$\cP$ such that $\schouten{\cP,\cP}=0$, 
the map $\boldsymbol{\dd}_{\cP}=\schouten{\cP,\cdot}\colon
\cX^\ell(N^n)\to\cX^{\ell+1}(N^n)$ is a differential.%
\end{rem}

\begin{rem}\label{RemSchoutenGraph}
The graphical calculation of the Schouten bracket $\schouten{\cdot ,\cdot }$ of two arguments amounts to the action --\,via the Leibniz rule\,-- of every out\/-\/going edge in an argument on all the internal vertices in the other argument.
For the Schouten bracket of a $k$-\/vector with an $\ell$-\/vector, the rule of signs is this.
For the sake of definition, enumerate the sinks in the first and second arguments by using $0$,\ $\ldots$,\ $k-1$ and $0$,\ $\ldots$, $\ell-1$, respectively.
Then the arrow into the $j${th} sink in the second argument acts on the internal vertices of the first argument, acquiring the sign factor $(-)^j$; here $0 \leqslant j < \ell$.
On the other hand, the arrow to the $i${th} sink in the first argument acts on the second argument's internal vertices with the sign factor $-(-)^{(k-1)-i}$ for $0 \leqslant i \leqslant k-1$.

The rule of signs, as it has been phrased above, is valid~--- provided that, for a $k$-\/vector~$\cX$ and $\ell$-\/vector~$\cY$, the numbers $0$,\ $\ldots$ of the~$k$ (or~$k-1$) sinks originating in the $(k+\ell-1)$-\/vector $\lshad\cX,\cY\rshad$ 
from the first argument~$\cX$ \emph{precede} the numbers of~$\ell-1$ (resp.,~$\ell$) sinks originating from~$\cY$ in the overall enumeration of those $k+\ell-1$~sinks.\footnote{Such is the default convention which formula~\eqref{EqSchouten} suggests for the product of 
parity\/-\/odd variables~$\xi_{i_\alpha}$, where~$1\leqslant\alpha\leqslant k+\ell-1$.}
For example, it is this ordering of sinks using $\mathit{1}\prec\mathit{2}$ which is shown in~\eqref{Fig21New},
\begin{align}\label{Fig21New}
\lshad
\unitlength=.8mm
\special{em:linewidth 0.4pt}
\linethickness{0.4pt}
\begin{picture}(10.67,7.67)(0,2)
\put(7.00,7.00){\circle*{1}}
\put(7.00,7.00){\vector(-1,-2){3.67}}
\put(7.00,7.00){\vector(1,-2){3.67}}
\put(3.00,-2.00){\makebox(0,0)[cc]{\tiny\it1}}
\put(10.00,-2.00){\makebox(0,0)[cc]{\tiny\it2}}
\end{picture}
\ ,\ %
\hspace{-5mm}
\begin{picture}(10.67,7.67)(0,2)
\put(7.00,7.00){\circle*{1}}
\put(7.00,7.00){\vector(0,-1){7.00}}
\put(7.00,-2.00){\makebox(0,0)[cc]{\tiny\it1}}
\end{picture}
\rshad=
{+} 
\begin{picture}(10.67,7.67)(0,0)
\put(7.00,0.00){\circle*{1}}
\put(7.00,0.00){\vector(-1,-2){3.67}}
\put(7.00,0.00){\vector(1,-2){3.67}}
\put(7.00,7.50){\circle*{1}}
\put(7.00,7.50){\vector(0,-1){7.00}}
\put(3.00,-9.00){\makebox(0,0)[cc]{\tiny\it1}}
\put(10.00,-9.00){\makebox(0,0)[cc]{\tiny\it2}}
\end{picture}
{-} \left(
\begin{picture}(10.67,7.67)(0,0)
\put(7.00,7.50){\circle*{1}}
\put(7.00,7.50){\vector(-1,-4){3.67}}
\put(7.00,7.50){\vector(1,-2){3.67}}
\put(10.67,0.00){\circle*{1}}
\put(10.67,0.00){\vector(0,-1){7.00}}
\put(3.00,-9.00){\makebox(0,0)[cc]{\tiny\it1}}
\put(10.00,-9.00){\makebox(0,0)[cc]{\tiny\it2}}
\end{picture}
\ \ 
{-}
\begin{picture}(10.67,7.67)(0,0)
\put(7.00,7.50){\circle*{1}}
\put(7.00,7.50){\vector(-1,-2){3.67}}
\put(7.00,7.50){\vector(1,-4){3.67}}
\put(3.33,0.00){\circle*{1}}
\put(3.33,0.00){\vector(0,-1){7.00}}
\put(3.00,-9.00){\makebox(0,0)[cc]{\tiny\it2}}
\put(10.00,-9.00){\makebox(0,0)[cc]{\tiny\it1}}
\end{picture}
\ \ 
\right);
\end{align}\\
here $k=2$,\ $\ell=1$ and the enumeration of arguments begins at $\it1$.

Still let us note that in its realization via Kontsevich graphs, the calculation of the Schouten bracket~$\lshad{\cdot},{\cdot}\rshad$ effectively amounts to a consecutive plugging of one of its arguments into each of the other argument's sinks (see~\eqref{Fig21New} again). Therefore, it would be more natural to start enumerating the sinks of the graph that acts (on the new content in one of its sinks, possibly the first), but when that new argument is reached, to interrupt and now enumerate the argument's own sinks, then continuing the enumeration of sinks (if there still remain any to be counted) in the graph that acts. This change of enumeration strategy comes at a price of having extra sign factors in front of the graphs. Namely, the arrow into the $j$th~sink in the second argument acquires the extra sign factor~$(-)^{j\cdot k}$. Similarly, the arrow to the $i$th~sink in the first argument of~$\lshad{\cdot},{\cdot}\rshad$ must now be multiplied by~$(-)^{\ell\cdot(k-1-i)}$; we recall that $0\leqslant i <k$ and~$0\leqslant j <\ell$. 
We note that for~$k$ and $\ell$~even (e.g., $k=2$ and~$\ell=2$ in formula~\eqref{EqLHSGraphs}) 
no extra sign factors appear at all from the re\/-\/ordering at a price of~$(-)^{j\cdot k}$ and~$(-)^{\ell\cdot(k-1-i)}$. For example, such is the final ordering of the $3=2+1=1+2$ sinks which is shown in Fig.~\ref{FigLHS} on p.~\pageref{FigLHS}.
\end{rem}

Summarizing, to be Poisson a bi\/-\/vector~$\cP$ must satisfy the \emph{master\/-\/equation},
\begin{equation}\label{EqMasterEq}
\schouten{\cP,\cP}=0,
\end{equation}
of which formula~\eqref{EqJacFig} is the component 
expansion with respect to the indices 
$(i,j,k)$ in the tri\/-\/vector~$\schouten{\cP,\cP}(\bx,\boldsymbol{\xi})$.

\begin{define}\label{DefInfPoisson}
Let~$\cP$ be a Poisson bi\/-\/vector on the manifold~$N^n$ at hand and consider its deformation $\cP\mapsto\cP+\veps \cQ(\cP)+\bar{o}(\veps)$. We say that after such deformation the bi\/-\/vector stays \emph{infinitesimally Poisson} if
\begin{equation}\tag{\ref{Eq1stOrder}${}'$}
\schouten{\cP+\varepsilon \cQ(\cP)+\bar{o}(\varepsilon),\cP+\varepsilon \cQ(\cP)+\bar{o}(\varepsilon)}=\bar{o}(\varepsilon),
\end{equation}
that is, the master\/-\/equation is still satisfied up to~$\bar{o}(\veps)$ for a given solution~$\cP$ of~\eqref{EqMasterEq}.
\end{define}

\begin{rem}
Nowhere above should one expect that the leading deformation term~$\cQ$ in~
$\cP+\varepsilon\cQ+\bar{o}(\varepsilon)$ itself would be a Poisson bi\/-\/vector. This may happen for~$\cQ$ only incidentally.
\end{rem}

Expanding the left\/-\/hand side 
of equation~\eqref{Eq1stOrder} and using the shifted\/-\/graded skew\/-\/symmetry of the Schouten bracket $\schouten{\cdot,\cdot}$, 
we extract the deformation equation 
\begin{equation}\tag{\ref{EqWhichMechanism}}
\schouten{\cP, \cQ} \doteq 0 \quad \quad \text{via} \  
\schouten{\cP,\cP} = 0. 
\end{equation}
Let us consider a class of its solutions~$\cQ=\cQ(\cP)$ which are universal with respect to all finite\/-\/dimensional affine Poisson manifolds~$(N^n,\cP)$.

\subsection{The Kontsevich tetrahedral flow}
In the paper \cite{Ascona96}, Kontsevich proposed a particular construction of 
infinitesimal deformations $\cP\mapsto\cP+\veps \cQ(\cP)+\bar{o}(\veps)$
for Poisson structures on affine real manifolds. 
One such flow~$\dot{\cP}=\cQ(\cP)$ on the space of Poisson bi-vectors~$\cP$ 
is associated with the complete graph on four vertices, that is, the tetrahedron. Up to symmetry, there are two essentially different ways, resulting in $\Gamma_{1}$ and $\Gamma_{2}'$, to orient its edges, provided that every vertex is a source for two arrows and, as an elementary count suggests,
there are two arrows leaving the tetrahedron and acting on the arguments of the bi-differential operator which the tetrahedral graph encodes. The two oriented tetrahedral graphs are shown in Fig.~\ref{FigTetra}.
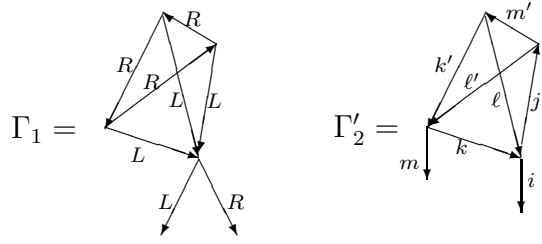
\begin{figure}[htb]
\unitlength=1mm
\special{em:linewidth 0.4pt}
\linethickness{0.4pt}
\begin{picture}(30.00,30.00)
\put(20.00,30.00){\vector(-1,-2){7.67}}
\put(12.33,15.00){\vector(4,3){14.67}}
\put(20.00,30.1){\vector(-4,3){0}}
\put(27.00,26){\line(-5,3){7.00}}
\put(20.00,29.67){\vector(1,-4){4.67}}
\put(24.7,10.65){\vector(1,-2){5.00}}
\put(24.7,10.65){\vector(-1,-2){5.00}}
\put(12.33,15.00){\vector(3,-1){12.33}}
\put(27.00,26.00){\line(-1,-6){2.2}}
\put(24.85,12.5){\vector(-1,-4){0.2}}
\put(0.00,14.67){\makebox(0,0)[lc]{$\Gamma_1={}$}}
\put(16,22.33){\makebox(0,0)[rb]{\tiny$R$}}
\put(22.7,17.67){\makebox(0,0)[rb]{\tiny$L$}}
\put(22.67,28.33){\makebox(0,0)[lb]{\tiny$R$}}
\put(25.67,18.5){\makebox(0,0)[lc]{\tiny$L$}}
\put(19.5,20.00){\makebox(0,0)[rb]{\tiny$R$}}
\put(17.67,12.33){\makebox(0,0)[rt]{\tiny$L$}}
\put(21.33,5.00){\makebox(0,0)[rc]{\tiny$L$}}
\put(28.33,5.00){\makebox(0,0)[lc]{\tiny$R$}}
\end{picture}
\qquad
\unitlength=1mm
\special{em:linewidth 0.4pt}
\linethickness{0.4pt}
\begin{picture}(30.00,30.00)
\put(20.00,30.00){\vector(-1,-2){7.67}}
\put(27,26){\vector(-4,-3){14.5}}
\put(20.00,30.1){\vector(-4,3){0}}
\put(27.00,26){\line(-5,3){7.00}}
\put(20.00,29.67){\vector(1,-4){4.67}}
\put(24.7,10.65){\vector(0,-1){7.00}}
\put(12.33,15){\vector(0,-1){7.00}}
\put(12.33,15.00){\vector(3,-1){12.33}}
\put(27.00,26.00){\line(-1,-6){2.22}}
\put(27.00,26){\vector(1,4){0}}
\put(0.00,14.67){\makebox(0,0)[lc]{$\Gamma_2'={}$}}
\put(15.83,22.33){\makebox(0,0)[rb]{\tiny$k'$}}
\put(22.00,17.67){\makebox(0,0)[rb]{\tiny$\ell$}}
\put(22.67,29.33){\makebox(0,0)[lb]{\tiny$m'$}}
\put(26,18.33){\makebox(0,0)[lc]{\tiny$j$}}
\put(19.33,20.00){\makebox(0,0)[rb]{\tiny$\ell'$}}
\put(17.67,13.33){\makebox(0,0)[rt]{\tiny$k$}}
\put(11.33,10.00){\makebox(0,0)[rc]{\tiny$m$}}
\put(25.5,8.00){\makebox(0,0)[lc]{\tiny$i$}}
\end{picture}
\caption{The Kontsevich tetrahedral graphs encode two bi\/-\/linear bi\/-\/differential operators on the product $C^\infty(N^n)\times C^\infty(N^n)$.}\label{FigTetra}
\end{figure}
Unlike the operator encoded by $\Gamma_{1}$, that of $\Gamma_{2}'$ is generally speaking not skew\/-\/symmetric with respect to its arguments.
By definition, put $\Gamma_2 \mathrel{{:}{=}} \frac{1}{2}(\Gamma_{2}'(\ground{1},\ground{2})-\Gamma_{2}'(\ground{2},\ground{1}))$ to extract the antisymmetric part, that is, the bi\/-\/vector encoded by~$\Gamma_{2}'$. Explicitly, the quartic-nonlinear differential polynomials $\Gamma_{1}(\cP)$ and $\Gamma_{2}(\cP)$, depending on a Poisson bi-vector $\cP$, are given by the formulae
\begin{subequations}\label{EqFlows}
\begin{align}\label{EqFlow1}
\Gamma_{1}(\cP)&=\sum_{i,j=1}^{n} 
\biggl(\sum\limits_{k,\ell,m,k',\ell',m'=1}^{n} \frac{\partial^{3}\cP^{ij}}{\partial x^{k} \partial x^{\ell}  \partial x^{m}} \frac{\partial \cP^{kk'} }{\partial x^{\ell'}} \frac{\partial \cP^{\ell\ell'}}{\partial x^{m'}} \frac{\partial \cP^{mm'}}{\partial x^{k'}}
\biggr)\frac{\partial}{\partial x^{i}} \wedge \frac{\partial}{\partial x^{j}}\\
\intertext{and}
\label{EqFlow2}
\Gamma_{2}(\cP)&=\sum_{i,m=1}^{n} 
\biggl( \sum\limits_{j,k,\ell,k',\ell',m'=1}^{n} \frac{\partial^{2}\cP^{ij}}{\partial x^{k} \partial x^{\ell}} \frac{\partial^{2}\cP^{km}}{\partial x^{k'} \partial x^{\ell'} }  \frac{\partial\cP^{k'\ell}}{\partial x^{m'}}   \frac{\partial\cP^{m'\ell'}}{\partial x^{j}}   
\biggr)\frac{\partial}{\partial x^{i}} \wedge \frac{\partial}{\partial x^{m}},
\end{align}
\end{subequations}
respectively.
To construct a class of flows on the space of bi-vectors, 
Kontsevich suggested to consider linear combinations, balanced by using the ratio $a:b$, of the 
bi\/-\/vectors~$\Gamma_{1}$ and~$\Gamma_{2}$.
We recall from section~\ref{SecGraphOp} that every internal vertex of each graph is inhabited by a copy of a given Poisson bi\/-\/vector~$\cP$, so that the linear combination of two graphs encodes the bi\/-\/vector $\cQ_{a:b}(\cP) = a\cdot\Gamma_1(\cP) + b\cdot\Gamma_2(\cP)$, quartic in $\cP$ and balanced using~$a:b$.
We now inspect at which ratio $a:b$ the bi\/-\/vector $\cP+\varepsilon \cQ_{a:b}(\cP)+\bar{o}(\varepsilon)$ stays infinitesimally Poisson
, that is, 
\begin{equation}\tag{\ref{Eq1stOrder}}
\schouten{\cP+\varepsilon \cQ_{a:b}(\cP)+\bar{o}(\varepsilon),\cP+\varepsilon \cQ_{a:b}(\cP)+\bar{o}(\varepsilon)}=\bar{o}(\varepsilon).
\end{equation}
The left\/-\/hand side of the deformation equation, 
\begin{equation}\tag{\ref{EqWhichMechanism}}
\schouten{\cP, \cQ_{a:b}(\cP)} \doteq 0 \quad \quad \text{via} \  
\schouten{\cP,\cP} = 0,
\end{equation}
can be seen in terms of graphs:
%
\begin{equation}
\schouten{\cP,a \cdot \Gamma_{1}+b \cdot \Gamma_{2}}= 
\Biggl\llbracket
\unitlength=1mm
\special{em:linewidth 0.4pt}
\linethickness{0.4pt}
\hspace{-20mm}
\raisebox{-20pt}[15pt][30pt]{
	\begin{picture}(30.00,0.00)
	\put(24.7,10.65){\vector(1,-2){3.60}}
	\put(24.7,10.65){\vector(-1,-2){3.60}}
	\put(28.00,2.00){\makebox(0,0)[cc]{\tiny\it2}}
	\put(20.70,2.00){\makebox(0,0)[cc]{\tiny\it1}}
	\end{picture}}
,
a \cdot\hspace{-8mm}
\unitlength=.6mm
\special{em:linewidth 0.4pt}
\linethickness{0.4pt}
\raisebox{-30pt}[15pt][25pt]{
	\begin{picture}(30.00,30.00)
	\put(20.00,30.00){\vector(-1,-2){7.67}}
	\put(12.33,15.00){\vector(4,3){14.67}}
	\put(20.00,30.1){\vector(-4,3){0}}
	\put(27.00,26){\line(-5,3){7.00}}
	\put(20.00,29.67){\vector(1,-4){4.67}}
	\put(24.7,10.65){\vector(1,-2){3.60}}
	\put(24.7,10.65){\vector(-1,-2){3.60}}
	\put(12.33,15.00){\vector(3,-1){12.33}}
	\put(27.00,26.00){\line(-1,-6){2.2}}
	\put(24.85,12.5){\vector(-1,-4){0.2}}
	\put(28.00,1.00){\makebox(0,0)[cc]{\tiny\it2}}
	\put(20.70,1.00){\makebox(0,0)[cc]{\tiny\it1}}
	\end{picture}}
+ \frac{b}{2} \cdot
\Biggl(
\hspace{-7mm}
\raisebox{-30pt}[15pt][15pt]{
	\begin{picture}(30.00,30.00)
	\put(20.00,30.00){\vector(-1,-2){7.67}}
	\put(27,26){\vector(-4,-3){14.5}}
	\put(20.00,30.1){\vector(-4,3){0}}
	\put(27.00,26){\line(-5,3){7.00}}
	\put(20.00,29.67){\vector(1,-4){4.67}}
	\put(24.7,10.65){\vector(0,-1){7.00}}
	\put(12.33,15){\vector(0,-1){11.30}}
	\put(12.33,15.00){\vector(3,-1){12.33}}
	\put(27.00,26.00){\line(-1,-6){2.22}}
	\put(27.00,26){\vector(1,4){0}}
	\put(24.00,1.00){\makebox(0,0)[cc]{\tiny\it2}}
	\put(12.00,1.00){\makebox(0,0)[cc]{\tiny\it1}}
	\end{picture}}
-
\hspace{-8mm}
\raisebox{-30pt}[15pt][6.8mm]{
	\begin{picture}(30.00,30.00)
	\put(20.00,30.00){\vector(-1,-2){7.67}}
	\put(27,26){\vector(-4,-3){14.5}}
	\put(20.00,30.1){\vector(-4,3){0}}
	\put(27.00,26){\line(-5,3){7.00}}
	\put(20.00,29.67){\vector(1,-4){4.67}}
	\qbezier(24.7,10.65)(23.2,9.65)(21.7,8.65)
	\qbezier(17.7,6.65)(15,5.15)(12.33,3.65)
	\put(12.33,3.65){\vector(-4,-3){0.00}}
	\qbezier(12.33,15)(18.515,9.325)(24.7,3.65)
	\put(24.7,3.65){\vector(4,-3){0.00}}
	\put(12.33,15.00){\vector(3,-1){12.33}}
	\put(27.00,26.00){\line(-1,-6){2.22}}
	\put(27.00,26){\vector(1,4){0}}
	\put(24.00,1.00){\makebox(0,0)[cc]{\tiny\it2}}
	\put(12.00,1.00){\makebox(0,0)[cc]{\tiny\it1}}
	\end{picture}}
\Biggr)
\Biggr\rrbracket.
\label{EqLHSGraphs}
\end{equation}
Let $a:b=1:6$ (specifically, $a=\tfrac{1}{4}$ and $b=\tfrac{3}{2}$).
Then the left\/-\/hand side of~\eqref{EqWhichMechanism} takes the shape depicted in Fig.~\ref{FigLHS}. 
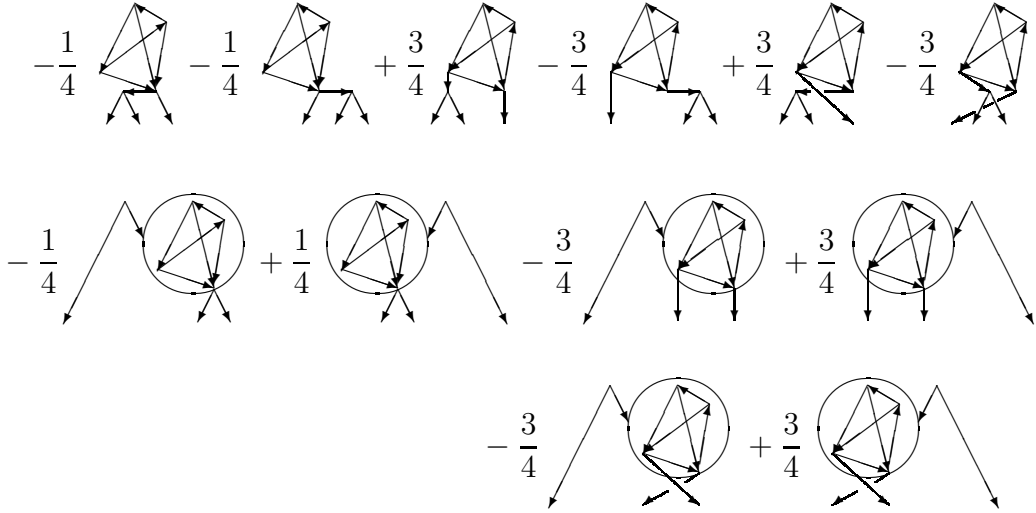
\begin{figure}[htb]
\begin{multline*}
\qquad\ \ {-}\frac{1}{4}	\unitlength=.6mm
	\special{em:linewidth 0.4pt}
	\linethickness{0.4pt}
	\raisebox{-25pt}[40pt][30pt]{
		\begin{picture}(20.00,30.00)(10,0)
		\put(20.00,30.00){\vector(-1,-2){7.67}}
		\put(12.33,15.00){\vector(4,3){14.67}}
		\put(20.00,30.1){\vector(-4,3){0}}
		\put(27.00,26){\line(-5,3){7.00}}
		\put(20.00,29.67){\vector(1,-4){4.67}}
		\put(24.7,10.65){\vector(1,-2){3.60}}
		\put(24.7,10.65){\vector(-1,0){7.20}}
		\put(12.33,15.00){\vector(3,-1){12.33}}
		\put(27.00,26.00){\line(-1,-6){2.2}}
		\put(24.85,12.5){\vector(-1,-4){0.2}}
		\put(17.5,10.65){\vector(1,-2){3.60}}
		\put(17.5,10.65){\vector(-1,-2){3.60}}
		\end{picture}}
{}-\frac{1}{4}
	\unitlength=.6mm
	\special{em:linewidth 0.4pt}
	\linethickness{0.4pt}
	\raisebox{-25pt}[40pt][30pt]{
		\begin{picture}(25.00,30.00)(10,0)
		\put(20.00,30.00){\vector(-1,-2){7.67}}
		\put(12.33,15.00){\vector(4,3){14.67}}
		\put(20.00,30.1){\vector(-4,3){0}}
		\put(27.00,26){\line(-5,3){7.00}}
		\put(20.00,29.67){\vector(1,-4){4.67}}
		\put(24.7,10.65){\vector(1,0){7.20}}
		\put(24.7,10.65){\vector(-1,-2){3.60}}
		\put(12.33,15.00){\vector(3,-1){12.33}}
		\put(27.00,26.00){\line(-1,-6){2.2}}
		\put(24.85,12.5){\vector(-1,-4){0.2}}
		\put(31.9,10.65){\vector(1,-2){3.60}}
		\put(31.9,10.65){\vector(-1,-2){3.60}}
		\end{picture}}
{}+\frac{3}{4}
	\unitlength=.6mm
	\special{em:linewidth 0.4pt}
	\linethickness{0.4pt}
	\raisebox{-25pt}[40pt][30pt]{
		\begin{picture}(20.00,30.00)(10,0)
		\put(20.00,30.00){\vector(-1,-2){7.67}}
		\put(27,26){\vector(-4,-3){14.5}}
		\put(20.00,30.1){\vector(-4,3){0}}
		\put(27.00,26){\line(-5,3){7.00}}
		\put(20.00,29.67){\vector(1,-4){4.67}}
		\put(24.7,10.65){\vector(0,-1){7.00}}
		\put(12.33,15){\vector(0,-1){4.5}}
		\put(12.33,15.00){\vector(3,-1){12.33}}
		\put(27.00,26.00){\line(-1,-6){2.22}}
		\put(27.00,26){\vector(1,4){0}}
		\put(12.33,10.65){\vector(1,-2){3.60}}
		\put(12.33,10.65){\vector(-1,-2){3.60}}
		\end{picture}}
{}-\frac{3}{4}
	\unitlength=.6mm
	\special{em:linewidth 0.4pt}
	\linethickness{0.4pt}
	\raisebox{-25pt}[40pt][30pt]{
		\begin{picture}(25.00,30.00)(10,0)
		\put(20.00,30.00){\vector(-1,-2){7.67}}
		\put(27,26){\vector(-4,-3){14.5}}
		\put(20.00,30.1){\vector(-4,3){0}}
		\put(27.00,26){\line(-5,3){7.00}}
		\put(20.00,29.67){\vector(1,-4){4.67}}
		\put(24.7,10.65){\vector(1,0){7.00}}
		\put(12.33,15){\vector(0,-1){11.30}}
		\put(12.33,15.00){\vector(3,-1){12.33}}
		\put(27.00,26.00){\line(-1,-6){2.22}}
		\put(27.00,26){\vector(1,4){0}}
		\put(31.9,10.65){\vector(1,-2){3.60}}
		\put(31.9,10.65){\vector(-1,-2){3.60}}
		\end{picture}}
{}+\frac{3}{4}
	\unitlength=.6mm
	\special{em:linewidth 0.4pt}
	\linethickness{0.4pt}
	\raisebox{-25pt}[40pt][30pt]{
		\begin{picture}(20.00,30.00)(10,0)
		\put(20.00,30.00){\vector(-1,-2){7.67}}
		\put(27,26){\vector(-4,-3){14.5}}
		\put(20.00,30.1){\vector(-4,3){0}}
		\put(27.00,26){\line(-5,3){7.00}}
		\put(20.00,29.67){\vector(1,-4){4.67}}
		\qbezier(24.7,10.65)(21.7,10.65)(18.7,10.65)
		\qbezier(15.33,10.65)(13.33,10.65)(12.33,10.65)
		\put(12.33,10.65){\vector(-1,-0){0.00}}
		\qbezier(12.33,15)(18.515,9.325)(24.7,3.65)
		\put(24.7,3.65){\vector(4,-3){0.00}}
		\put(12.33,15.00){\vector(3,-1){12.33}}
		\put(27.00,26.00){\line(-1,-6){2.22}}
		\put(27.00,26){\vector(1,4){0}}
		\put(12.33,10.65){\vector(1,-2){3.60}}
		\put(12.33,10.65){\vector(-1,-2){3.60}}
		\end{picture}}
{}-\frac{3}{4}
	\unitlength=.6mm
	\special{em:linewidth 0.4pt}
	\linethickness{0.4pt}
	\raisebox{-25pt}[40pt][30pt]{
		\begin{picture}(20.00,30.00)(10,0)
		\put(20.00,30.00){\vector(-1,-2){7.67}}
		\put(27,26){\vector(-4,-3){14.5}}
		\put(20.00,30.1){\vector(-4,3){0}}
		\put(27.00,26){\line(-5,3){7.00}}
		\put(20.00,29.67){\vector(1,-4){4.67}}
		\qbezier(12.33,15)(12.33,15)(19,10.5)
		\put(19,10.5){\vector(2,-1){0.00}}
		\qbezier(24.7,10.65)(24.7,10.65)(20.7,8.65)
		\qbezier(17.65,7.15)(17.65,7.15)(19.65,8.15)
		\qbezier(15.65,6.15)(15.65,6.15)(10.65,3.65)
		\put(10.65,3.65){\vector(-2,-1){0}}
		\put(12.33,15.00){\vector(3,-1){12.33}}
		\put(27.00,26.00){\line(-1,-6){2.22}}
		\put(27.00,26){\vector(1,4){0}}
		\put(19,10.65){\vector(1,-2){3.60}}
		\put(19,10.65){\vector(-1,-2){3.60}}
		\end{picture}}
	\\
{}-{\frac{1}{4}} \quad
	\unitlength=.6mm
	\special{em:linewidth 0.4pt}
	\linethickness{0.4pt}
	\raisebox{-25pt}[40pt][25pt]{
		\begin{picture}(33.00,30.00)
		\put(20.00,30.00){\vector(-1,-2){7.67}}
		\put(12.33,15.00){\vector(4,3){14.67}}
		\put(20.00,30.1){\vector(-4,3){0}}
		\put(27.00,26){\line(-5,3){7.00}}
		\put(20.00,29.67){\vector(1,-4){4.67}}
		\put(24.7,10.65){\vector(1,-2){3.60}}
		\put(24.7,10.65){\vector(-1,-2){3.60}}
		\put(12.33,15.00){\vector(3,-1){12.33}}
		\put(27.00,26.00){\line(-1,-6){2.2}}
		\put(24.85,12.5){\vector(-1,-4){0.2}}
		\put(20.2,20.65){\oval(22,22)}
		\put(5.20,30.00){\vector(1,-2){4}}
		\put(5.20,30.00){\vector(-1,-2){13.5}}
		\end{picture}}
{}+{\frac{1}{4}} \hspace{1mm}
	\unitlength=.6mm
	\special{em:linewidth 0.4pt}
	\linethickness{0.4pt}
	\raisebox{-25pt}[40pt][25pt]{
		\begin{picture}(40.00,30.00)(10,0)
		\put(20.00,30.00){\vector(-1,-2){7.67}}
		\put(12.33,15.00){\vector(4,3){14.67}}
		\put(20.00,30.1){\vector(-4,3){0}}
		\put(27.00,26){\line(-5,3){7.00}}
		\put(20.00,29.67){\vector(1,-4){4.67}}
		\put(24.7,10.65){\vector(1,-2){3.60}}
		\put(24.7,10.65){\vector(-1,-2){3.60}}
		\put(12.33,15.00){\vector(3,-1){12.33}}
		\put(27.00,26.00){\line(-1,-6){2.2}}
		\put(24.85,12.5){\vector(-1,-4){0.2}}
		\put(20.2,20.65){\oval(22,22)}
		\put(35.20,30.00){\vector(1,-2){13.5}}
		\put(35.20,30.00){\vector(-1,-2){4}}
		\end{picture}}
{}- \frac{3}{4} 
	\unitlength=.6mm
	\special{em:linewidth 0.4pt}
	\linethickness{0.4pt}
	\raisebox{-25pt}[40pt][25pt]{
		\begin{picture}(42.00,30.00)(-8,0)
		\put(20.00,30.00){\vector(-1,-2){7.67}}
		\put(27,26){\vector(-4,-3){14.5}}
		\put(20.00,30.1){\vector(-4,3){0}}
		\put(27.00,26){\line(-5,3){7.00}}
		\put(20.00,29.67){\vector(1,-4){4.67}}
		\put(24.7,10.65){\vector(0,-1){7.00}}
		\put(12.33,15){\vector(0,-1){11.30}}
		\put(12.33,15.00){\vector(3,-1){12.33}}
		\put(27.00,26.00){\line(-1,-6){2.22}}
		\put(27.00,26){\vector(1,4){0}}
		\put(20.2,20.65){\oval(22,22)}
		\put(5.20,30.00){\vector(1,-2){4}}
		\put(5.20,30.00){\vector(-1,-2){13.5}}
		\end{picture}}
	\unitlength=.6mm
	\special{em:linewidth 0.4pt}
	\linethickness{0.4pt}
{}+\frac{3}{4} 
	\unitlength=.6mm
	\special{em:linewidth 0.4pt}
	\linethickness{0.4pt}
	\raisebox{-25pt}[40pt][25pt]{
		\begin{picture}(40.00,30.00)(8,0)
		\put(20.00,30.00){\vector(-1,-2){7.67}}
		\put(27,26){\vector(-4,-3){14.5}}
		\put(20.00,30.1){\vector(-4,3){0}}
		\put(27.00,26){\line(-5,3){7.00}}
		\put(20.00,29.67){\vector(1,-4){4.67}}
		\put(24.7,10.65){\vector(0,-1){7.00}}
		\put(12.33,15){\vector(0,-1){11.30}}
		\put(12.33,15.00){\vector(3,-1){12.33}}
		\put(27.00,26.00){\line(-1,-6){2.22}}
		\put(27.00,26){\vector(1,4){0}}
		\put(20.2,20.65){\oval(22,22)}
		\put(35.20,30.00){\vector(1,-2){13.5}}
		\put(35.20,30.00){\vector(-1,-2){4}}
		\end{picture}}
\\ 
{}- \frac{3}{4}  
	\unitlength=.6mm
	\special{em:linewidth 0.4pt}
	\linethickness{0.4pt}
	\raisebox{-25pt}[40pt][25pt]{
		\begin{picture}(42.00,30.00)(-8,0)
		\put(20.00,30.00){\vector(-1,-2){7.67}}
		\put(27,26){\vector(-4,-3){14.5}}
		\put(20.00,30.1){\vector(-4,3){0}}
		\put(27.00,26){\line(-5,3){7.00}}
		\put(20.00,29.67){\vector(1,-4){4.67}}
		\qbezier(24.7,10.65)(23.2,9.65)(21.7,8.65)
		\qbezier(17.7,6.65)(15,5.15)(12.33,3.65)
		\put(12.33,3.65){\vector(-4,-3){0.00}}
		\qbezier(12.33,15)(18.515,9.325)(24.7,3.65)
		\put(24.7,3.65){\vector(4,-3){0.00}}
		\put(12.33,15.00){\vector(3,-1){12.33}}
		\put(27.00,26.00){\line(-1,-6){2.22}}
		\put(27.00,26){\vector(1,4){0}}
		\put(20.2,20.65){\oval(22,22)}
		\put(5.20,30.00){\vector(1,-2){4}}
		\put(5.20,30.00){\vector(-1,-2){13.5}}
		\end{picture}}
{}+ \frac{3}{4} 
	\unitlength=.6mm
	\special{em:linewidth 0.4pt}
	\linethickness{0.4pt}
	\raisebox{-25pt}[40pt][25pt]{
		\begin{picture}(45.00,30.00)(8,0)
		\put(20.00,30.00){\vector(-1,-2){7.67}}
		\put(27,26){\vector(-4,-3){14.5}}
		\put(20.00,30.1){\vector(-4,3){0}}
		\put(27.00,26){\line(-5,3){7.00}}
		\put(20.00,29.67){\vector(1,-4){4.67}}
		\qbezier(24.7,10.65)(23.2,9.65)(21.7,8.65)
		\qbezier(17.7,6.65)(15,5.15)(12.33,3.65)
		\put(12.33,3.65){\vector(-4,-3){0.00}}
		\qbezier(12.33,15)(18.515,9.325)(24.7,3.65)
		\put(24.7,3.65){\vector(4,-3){0.00}}
		\put(12.33,15.00){\vector(3,-1){12.33}}
		\put(27.00,26.00){\line(-1,-6){2.22}}
		\put(27.00,26){\vector(1,4){0}}
		\put(20.2,20.65){\oval(22,22)}
		\put(35.20,30.00){\vector(1,-2){13.5}}
		\put(35.20,30.00){\vector(-1,-2){4}}
		\end{picture}}\qquad
\end{multline*}
\caption{Incoming arrows act on the content 
of boxes via the Leibniz rule; to obtain the tri\/-\/vector, the entire picture must be skew\/-\/symmetrized over the content of three sinks using $\sum_{\sigma \in S_{3}}(-)^{\sigma}$. Expanding and skew-symmetrizing, one obtains $39$ graphs in the left-hand side of~\eqref{EqWhichMechanism}.}\label{FigLHS}
\end{figure}
After the expansion of Leibniz rules and skew\/-\/symmetrization, 
the sum in Fig.~\ref{FigLHS} simplifies to 39~graphs; they are listed in Table~\ref{TableLHS} on p.~\pageref{TableLHS} below. 
Collecting, we conclude that  the left-hand side of \eqref{EqWhichMechanism} is the sum of 9~manifestly skew\/-\/symmetric expressions, see Fig.~\ref{Fig9Skew} (and Table~\ref{Tab9Skew} in Appendix~\ref{AppCode}).
\begin{figure}[htb]
\begin{align*}
\unitlength=.6mm
\special{em:linewidth 0.4pt}
\linethickness{0.4pt}
\raisebox{0mm}{${}  \sum\limits_{\sigma\in S_3}(-)^\sigma\Biggl\{$}{-\frac{1}{2}}\ 
\raisebox{-25pt}[40pt][30pt]{
	\begin{picture}(20.00,30.00)(10,0)
	\put(20.00,30.00){\vector(-1,-2){7.67}}
	\put(12.33,15.00){\vector(4,3){14.67}}
	\put(20.00,30.1){\vector(-4,3){0}}
	\put(27.00,26){\line(-5,3){7.00}}
	\put(20.00,29.67){\vector(1,-4){4.67}}
	\put(24.7,10.65){\vector(1,-2){3.60}}
	\put(24.7,10.65){\vector(-1,0){7.20}}
	\put(12.33,15.00){\vector(3,-1){12.33}}
	\put(27.00,26.00){\line(-1,-6){2.2}}
	\put(24.85,12.5){\vector(-1,-4){0.2}}
	\put(17.5,10.65){\vector(1,-2){3.60}}
	\put(17.5,10.65){\vector(-1,-2){3.60}}
	\put(13.50,0.50){\makebox(0,0)[cc]{\tiny$0$}}
	\put(21.00,0.50){\makebox(0,0)[cc]{\tiny$1$}}
	\put(29.00,0.50){\makebox(0,0)[cc]{\tiny$2$}}
	\put(15.00,11.00){\makebox(0,0)[cc]{\tiny$3$}}
	\put(27.50,11.00){\makebox(0,0)[cc]{\tiny$4$}}
	\put(10.00,14.50){\makebox(0,0)[cc]{\tiny$5$}}
	\put(29.50,27.50){\makebox(0,0)[cc]{\tiny$6$}}
	\put(20.00,33.00){\makebox(0,0)[cc]{\tiny$7$}}
	\end{picture}}
{}-{\frac{1}{2}}\ 
\raisebox{-25pt}[40pt][25pt]{
		\begin{picture}(40.00,30.00)(10,0)
		\put(20.00,30.00){\vector(-1,-2){7.67}}
		\put(12.33,15.00){\vector(4,3){14.67}}
		\put(20.00,30.1){\vector(-4,3){0}}
		\put(27.00,26){\line(-5,3){7.00}}
		\put(20.00,29.67){\vector(1,-4){4.67}}
		\put(24.7,10.65){\vector(1,-2){3.60}}
		\put(24.7,10.65){\vector(-1,-2){3.60}}
		\put(12.33,15.00){\vector(3,-1){12.33}}
		\put(27.00,26.00){\line(-1,-6){2.2}}
		\put(24.85,12.5){\vector(-1,-4){0.2}}
		\put(28.7,19.00){\vector(1,-2){7.8}}
		\put(28.7,19){\vector(-1,-2){4}}
		\put(21.00,0.50){\makebox(0,0)[cc]{\tiny$0$}}
		\put(28.50,0.50){\makebox(0,0)[cc]{\tiny$1$}}
		\put(36.50,0.50){\makebox(0,0)[cc]{\tiny$2$}}
		\put(30.50,21.50){\makebox(0,0)[cc]{\tiny$3$}}
		\put(27.50,11.00){\makebox(0,0)[cc]{\tiny$4$}}
		\put(10.00,14.50){\makebox(0,0)[cc]{\tiny$5$}}
		\put(29.50,27.50){\makebox(0,0)[cc]{\tiny$6$}}
		\put(20.00,33.00){\makebox(0,0)[cc]{\tiny$7$}}
		\end{picture}}\hspace{-7mm}
{}+{\frac{3}{2}}
\raisebox{-25pt}[40pt][25pt]{
		\begin{picture}(33.00,30.00)
		\put(20.00,30.00){\vector(-1,-2){7.67}}
		\put(12.33,15.00){\vector(4,3){14.67}}
		\put(20.00,30.1){\vector(-4,3){0}}
		\put(27.00,26){\line(-5,3){7.00}}
		\put(20.00,29.67){\vector(1,-4){4.67}}
		\put(24.7,10.65){\vector(1,-2){3.60}}
		\put(24.7,10.65){\vector(-1,-2){3.60}}
		\put(12.33,15.00){\vector(3,-1){12.33}}
		\put(27.00,26.00){\line(-1,-6){2.2}}
		\put(24.85,12.5){\vector(-1,-4){0.2}}
		\put(8.33,23.00){\vector(1,-2){4}}
		\put(8.33,23.00){\vector(-1,-2){9.66}}
		\end{picture}}
\hspace{-2mm}
{}+{\frac{3}{2}}
\raisebox{-25pt}[40pt][30pt]{
		\begin{picture}(25.00,30.00)(10,0)
		\put(20.00,30.00){\vector(-1,-2){7.67}}
		\put(27,26){\vector(-4,-3){14.5}}
		\put(20.00,30.1){\vector(-4,3){0}}
		\put(27.00,26){\line(-5,3){7.00}}
		\put(20.00,29.67){\vector(1,-4){4.67}}
		\put(24.7,10.65){\vector(1,0){7.00}}
		\put(12.33,15){\vector(0,-1){11.30}}
		\put(12.33,15.00){\vector(3,-1){12.33}}
		\put(27.00,26.00){\line(-1,-6){2.22}}
		\put(27.00,26){\vector(1,4){0}}
		\put(31.9,10.65){\vector(1,-2){3.60}}
		\put(31.9,10.65){\vector(-1,-2){3.60}}
		\end{picture}}
{}+{\frac{3}{2}}
\raisebox{-25pt}[40pt][30pt]{
		\begin{picture}(25.00,30.00)(10,0)
		\put(20.00,30.00){\vector(-1,-2){7.67}}
		\put(27,26){\vector(-4,-3){14.5}}
		\put(20.00,30.1){\vector(-4,3){0}}
		\put(27.00,26){\line(-5,3){7.00}}
		\put(20.00,29.67){\vector(1,-4){4.67}}
		\put(24.7,10.65){\vector(0,-1){7.00}}
		\put(13.33,17){\vector(-1,-2){3.33}}
		\put(12.33,15.00){\vector(3,-1){12.33}}
		\put(27.00,26.00){\line(-1,-6){2.22}}
		\put(27.00,26){\vector(1,4){0}}
		\put(10,10.65){\vector(1,-2){3.60}}
		\put(10,10.65){\vector(-1,-2){3.60}}
		\end{picture}}
\\
{}-{3}\hspace{-5mm}
\unitlength=.6mm
\special{em:linewidth 0.4pt}
\linethickness{0.4pt}
\raisebox{-25pt}[40pt][25pt]{
		\begin{picture}(42.00,30.00)(-8,0)
		\put(20.00,30.00){\vector(-1,-2){7.67}}
		\put(27,26){\vector(-4,-3){14.5}}
		\put(20.00,30.1){\vector(-4,3){0}}
		\put(27.00,26){\line(-5,3){7.00}}
		\put(20.00,29.67){\vector(1,-4){4.67}}
		\put(24.7,10.65){\vector(0,-1){7.00}}
		\put(12.33,15){\vector(0,-1){11.30}}
		\put(12.33,15.00){\vector(3,-1){12.33}}
		\put(27.00,26.00){\line(-1,-6){2.22}}
		\put(27.00,26){\vector(1,4){0}}
		\put(8.33,23.00){\vector(1,-2){4}}
		\put(8.33,23.00){\vector(-1,-2){9.66}}
		\end{picture}}
{}+{3}\!
	\raisebox{-25pt}[40pt][25pt]{
		\begin{picture}(40.00,30.00)(8,0)
		\put(20.00,30.00){\vector(-1,-2){7.67}}
		\put(27,26){\vector(-4,-3){14.5}}
		\put(20.00,30.1){\vector(-4,3){0}}
		\put(27.00,26){\line(-5,3){7.00}}
		\put(20.00,29.67){\vector(1,-4){4.67}}
		\put(24.7,10.65){\vector(0,-1){7.00}}
		\put(12.33,15){\vector(0,-1){11.30}}
		\put(12.33,15.00){\vector(3,-1){12.33}}
		\put(27.00,26.00){\line(-1,-6){2.22}}
		\put(27.00,26){\vector(1,4){0}}
		\put(28.7,18.65){\vector(1,-2){7.5}}
		\put(28.7,18.65){\vector(-1,-2){4}}
		\end{picture}}\hspace{-5mm}
{}+{3}\hspace{-5mm}
\raisebox{-25pt}[40pt][25pt]{
		\begin{picture}(42.00,30.00)(-8,0)
		\put(20.00,30.00){\vector(-1,-2){7.67}}
		\put(27,26){\vector(-4,-3){14.5}}
		\put(20.00,30.1){\vector(-4,3){0}}
		\put(27.00,26){\line(-5,3){7.00}}
		\put(20.00,29.67){\vector(1,-4){4.67}}
		\qbezier(24.7,10.65)(23.2,9.65)(21.7,8.65)
		\qbezier(17.7,6.65)(15,5.15)(12.33,3.65)
		\put(12.33,3.65){\vector(-4,-3){0.00}}
		\qbezier(12.33,15)(18.515,9.325)(24.7,3.65)
		\put(24.7,3.65){\vector(4,-3){0.00}}
		\put(12.33,15.00){\vector(3,-1){12.33}}
		\put(27.00,26.00){\line(-1,-6){2.22}}
		\put(27.00,26){\vector(1,4){0}}
		\put(16.00,38.00){\vector(1,-2){4}}
		\put(16.00,38.00){\vector(-1,-2){17.3}}
		\end{picture}}
{}-{3}\!\!
	\raisebox{-25pt}[40pt][25pt]{
		\begin{picture}(45.00,30.00)(8,0)
		\put(20.00,30.00){\vector(-1,-2){7.67}}
		\put(27,26){\vector(-4,-3){14.5}}
		\put(20.00,30.1){\vector(-4,3){0}}
		\put(27.00,26){\line(-5,3){7.00}}
		\put(20.00,29.67){\vector(1,-4){4.67}}
		\qbezier(24.7,10.65)(23.2,9.65)(21.7,8.65)
		\qbezier(17.7,6.65)(15,5.15)(12.33,3.65)
		\put(12.33,3.65){\vector(-4,-3){0.00}}
		\qbezier(12.33,15)(18.515,9.325)(24.7,3.65)
		\put(24.7,3.65){\vector(4,-3){0.00}}
		\put(12.33,15.00){\vector(3,-1){12.33}}
		\put(27.00,26.00){\line(-1,-6){2.22}}
		\put(27.00,26){\vector(1,4){0}}
		\put(31.00,34.00){\vector(1,-2){15.3}}
		\put(31.00,34.00){\vector(-1,-2){4}}
		\end{picture}}
\hspace{-10mm}
\raisebox{0mm}{\qquad$\Biggr\}.$}
\end{align*}
\caption{This sum of graphs is the skew-symmetrized content of Fig.~\ref{FigLHS}. In what follows, we realize these 9~terms in the left-hand side of~\eqref{EqWhichMechanism} by using an operator~$\Diamond$ acting, in the right-hand side of~\eqref{EqFactor} below, on the Ja\-co\-bi\-a\-tor~\eqref{EqJacFig}.}\label{Fig9Skew}
\end{figure}
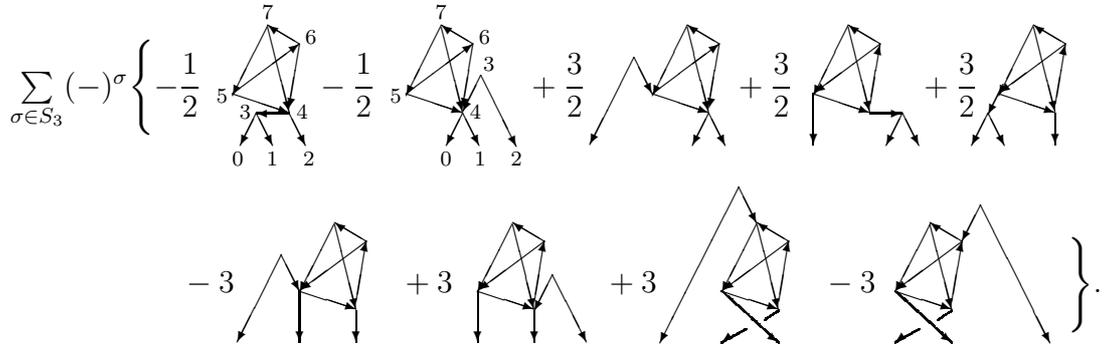
For example, 
when outlining a proof of our main theorem (see p.~\pageref{EqSol}), we shall explain how the coefficient~$-\frac{1}{2}$ of the first and second graphs in Fig.~\ref{Fig9Skew} is accumulated from the terms in the right-hand side of~\eqref{EqFactor}. Simultaneously, we shall track how the coefficients cancel out for the two other graphs which are produced by expanding the same Leibniz rules (that gave the above two graphs).

\subsection{Main result}
The reason why we are particularly concerned with the ratio $a:b = 1:6$ is that this condition is \emph{necessary} for equation \eqref{EqWhichMechanism} to hold.
This has been proved in~\cite{tetra16} by producing examples of Poisson bi\/-\/vector~$\cP$ such that $\schouten{\cP,\cQ_{a:b}(\cP)}=0$ only when $a:b=1:6$.
Let us now inspect whether this condition is also sufficient. 
The task is to factorize the content of Fig.~\ref{Fig9Skew} through the Jacobi identity in~\eqref{EqJacFig}.

We first examine the mechanism for the tri\/-\/vector $\schouten{\cP,\cQ_{1:6}(\cP)}$ in~\eqref{EqWhichMechanism} to vanish by virtue of the Jacobi identity $\Jac(\cP) 
=0$ for a given Poisson bi\/-\/vector~$\cP$ on an affine manifold~$N^n$ of any dimension. 
We claim that the Jacobiator $\Jac_\cP{}(\cdot,\cdot,\cdot)$ is not necessarily (indeed, far not always!\,) evaluated at the three arguments~$f,g,h$ of the tri\/-\/vector $\schouten{\cP,\cQ_{1:6}(\cP)}$.
A sample graph that can actually appear in such factorizing operators~$\Diamond$ is drawn in Fig.~\ref{FigSample} below.

\begin{lemma}[\cite{sqs15}]\label{Lemma}
A tri\/-\/differential operator $C = \sum_{|I|,|J|,|K|\geqslant 0} c^{IJK}\ \partial_I \otimes \partial_J \otimes \partial_K$ 
with coefficients $c^{IJK} \in C^\infty(N^n)$
vanishes identically iff all its homogeneous components 
$C_{ijk} = \sum_{|I|=i,|J|=j,|K|=k} c^{IJK}\ \partial_I \otimes \partial_J \otimes \partial_K$ 
vanish for all differential orders $(i,j,k)$ of the respective 
multi\/-\/indices $(I,J,K)$; here $\partial_L = \partial_1^{\alpha_1} \circ \cdots \circ \partial_n^{\alpha_n}$ for a multi\/-\/index $L = (\alpha_1, \ldots, \alpha_n)$. 
\end{lemma}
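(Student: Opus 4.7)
The plan is straightforward: one direction is immediate since $C = \sum_{i,j,k}C_{ijk}$, so vanishing of each homogeneous piece $C_{ijk}$ entails vanishing of $C$. The substantive direction is the converse, and I would prove the sharper statement that \emph{each} individual coefficient $c^{IJK} \in C^\infty(N^n)$ vanishes; since the decomposition $C = \sum_{i,j,k} C_{ijk}$ groups together terms with disjoint sets of multi-indices, this at once yields $C_{ijk} \equiv 0$ for every triple of orders.

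To extract $c^{I_0 J_0 K_0}$ at a prescribed point $x_0 \in N^n$, I would work in an affine chart around $x_0$ with Cartesian coordinates $\bx = (x^1,\ldots,x^n)$ and plug in the polynomial test functions
\[
f(\bx) = \frac{(\bx-x_0)^{I_0}}{I_0!}, \qquad g(\bx) = \frac{(\bx-x_0)^{J_0}}{J_0!}, \qquad h(\bx) = \frac{(\bx-x_0)^{K_0}}{K_0!},
\]
where $I_0! = \alpha_1!\cdots\alpha_n!$ for $I_0=(\alpha_1,\ldots,\alpha_n)$. The key elementary identity $\partial_L\bigl((\bx-x_0)^I/I!\bigr)\big|_{\bx=x_0} = \delta_{L,I}$ then collapses the triple sum: evaluating at $\bx = x_0$ gives $C(f,g,h)(x_0) = c^{I_0 J_0 K_0}(x_0)$. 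The hypothesis $C \equiv 0$ therefore forces $c^{I_0 J_0 K_0}(x_0) = 0$, and since both $x_0$ and the multi-indices $(I_0,J_0,K_0)$ were arbitrary, every coefficient function vanishes identically on $N^n$. Consequently each $C_{ijk}$, being a finite sum of such vanishing terms, is itself zero.

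I do not anticipate any real obstacle: the only point requiring mild care is the use of affineness, which is what permits the globally defined polynomial test functions $(\bx-x_0)^I$ on the chart. If one preferred purely local test functions, the same argument goes through with any bump-function multiples of these monomials, since all conclusions are taken pointwise at $x_0$. The proof is genuinely one line once the isolating test functions have been written down.
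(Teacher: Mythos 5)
Your proof is correct: the trivial direction plus the coefficient\/-\/isolating test functions $f=(\bx-x_0)^{I_0}/I_0!$ (or bump\/-\/function truncations thereof, which is the right fix on a general affine manifold) do establish that every $c^{IJK}$, and hence every homogeneous component $C_{ijk}$, vanishes. The paper itself offers no proof of this lemma, importing it from~\cite{sqs15}, and the argument given there is the same standard one — isolating each coefficient pointwise by evaluating on suitable test functions — so there is nothing to add.
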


In practice, Lemma~\ref{Lemma} states that for every arrow falling on the Jacobiator $\Jac_{\cP}(\it1,\it2,\it3)$ --\,for which, in turn, a triple of arguments $\it1,\it2,\it3$
is specified\,-- the expansion of the 
Leibniz rule yields four fragments which vanish separately: e.g., we have that
\begin{align*}
\text{\raisebox{-12pt}[30pt]{
	\unitlength=0.7mm
	\linethickness{0.4pt}
	\begin{picture}(26.00,16.33)
	\put(0.00,5.00){\line(1,0){26.00}}
	\put(2.00,5.00){\circle*{1.33}}
	\put(13.00,5.00){\circle*{1.33}}
	\put(24.00,5.00){\circle*{1.33}}
	\put(2.00,1.33){\makebox(0,0)[cc]{\tiny\it1}}
	\put(13.00,1.33){\makebox(0,0)[cc]{\tiny\it2}}
	\put(24.00,1.33){\makebox(0,0)[cc]{\tiny\it3}}
	\put(7.33,11.33){\circle*{1.33}}
	\put(7.33,11.33){\vector(1,-1){5.5}}
	\put(7.33,11.33){\vector(-1,-1){5.5}}
	\put(13,17){\circle*{1.33}}
	\put(13,17){\vector(1,-1){11.2}}
	\put(13,17){\vector(-1,-1){5.1}}
	\put(13,8){\oval(34,24)}
	\put(-8.00,23.00){\vector(1,-1){7.00}}
	\end{picture}
}}\ \ 
{=}
\left(\!\!\!
\text{\raisebox{-12pt}[25pt]{
	\unitlength=0.70mm
	\linethickness{0.4pt}
	\begin{picture}(26.00,16.33)
	\put(0.00,5.00){\line(1,0){26.00}}
	\put(2.00,5.00){\circle*{1.33}}
	\put(13.00,5.00){\circle*{1.33}}
	\put(24.00,5.00){\circle*{1.33}}
	\put(2.00,1.33){\makebox(0,0)[cc]{\tiny\it1}}
	\put(13.00,1.33){\makebox(0,0)[cc]{\tiny\it2}}
	\put(24.00,1.33){\makebox(0,0)[cc]{\tiny\it3}}
	\put(7.33,11.33){\circle*{1.33}}
	\put(7.33,11.33){\vector(1,-1){5.5}}
	\put(7.33,11.33){\vector(-1,-1){5.5}}
	\put(13,17){\circle*{1.33}}
	\put(13,17){\vector(1,-1){11.2}}
	\put(13,17){\vector(-1,-1){5.1}}
	\bezier{80}(2.00,21.00)(7.33,21.00)(7.33,15.33)
	\put(7.33,15.33){\line(0,-1){2.00}}
	\put(7.33,12.00){\vector(0,-1){0.00}}
	\end{picture}
}}
{+}\!
\text{\raisebox{-12pt}[25pt]{
	\unitlength=0.70mm
	\linethickness{0.4pt}
	\begin{picture}(26.00,16.33)
	\put(0.00,5.00){\line(1,0){26.00}}
	\put(2.00,5.00){\circle*{1.33}}
	\put(13.00,5.00){\circle*{1.33}}
	\put(24.00,5.00){\circle*{1.33}}
	\put(2.00,1.33){\makebox(0,0)[cc]{\tiny\it1}}
	\put(13.00,1.33){\makebox(0,0)[cc]{\tiny\it2}}
	\put(24.00,1.33){\makebox(0,0)[cc]{\tiny\it3}}
	\put(7.33,11.33){\circle*{1.33}}
	\put(7.33,11.33){\vector(1,-1){5.5}}
	\put(7.33,11.33){\vector(-1,-1){5.5}}
	\put(13,17){\circle*{1.33}}
	\put(13,17){\vector(1,-1){11.2}}
	\put(13,17){\vector(-1,-1){5.1}}
	\put(2.00,21.00){\line(1,0){7.00}}
	\bezier{80}(9.00,21.00)(13.00,21.00)(13.00,19.00)
	\put(13.00,17.50){\vector(0,-1){0.00}}
	\end{picture}
}}
\right)
{+}\!
\text{\raisebox{-12pt}[25pt]{
	\unitlength=0.70mm
	\linethickness{0.4pt}
	\begin{picture}(26.00,16.33)
	\put(0.00,5.00){\line(1,0){26.00}}
	\put(2.00,5.00){\circle*{1.33}}
	\put(13.00,5.00){\circle*{1.33}}
	\put(24.00,5.00){\circle*{1.33}}
	\put(2.00,1.33){\makebox(0,0)[cc]{\tiny\it1}}
	\put(13.00,1.33){\makebox(0,0)[cc]{\tiny\it2}}
	\put(24.00,1.33){\makebox(0,0)[cc]{\tiny\it3}}
	\put(7.33,11.33){\circle*{1.33}}
	\put(7.33,11.33){\vector(1,-1){5.5}}
	\put(7.33,11.33){\vector(-1,-1){5.5}}
	\put(13,17){\circle*{1.33}}
	\put(13,17){\vector(1,-1){11.2}}
	\put(13,17){\vector(-1,-1){5.1}}
	\put(2.00,21.00){\vector(0,-1){15}}
	\end{picture}
}}
{+}\!
\text{\raisebox{-12pt}[25pt]{
	\unitlength=0.70mm
	\linethickness{0.4pt}
	\begin{picture}(26.00,16.33)
	\put(0.00,5.00){\line(1,0){26.00}}
	\put(2.00,5.00){\circle*{1.33}}
	\put(13.00,5.00){\circle*{1.33}}
	\put(24.00,5.00){\circle*{1.33}}
	\put(2.00,1.33){\makebox(0,0)[cc]{\tiny\it1}}
	\put(13.00,1.33){\makebox(0,0)[cc]{\tiny\it2}}
	\put(24.00,1.33){\makebox(0,0)[cc]{\tiny\it3}}
	\put(7.33,11.33){\circle*{1.33}}
	\put(7.33,11.33){\vector(1,-1){5.5}}
	\put(7.33,11.33){\vector(-1,-1){5.5}}
	\put(13,17){\circle*{1.33}}
	\put(13,17){\vector(1,-1){11.2}}
	\put(13,17){\vector(-1,-1){5.1}}
	\bezier{160}(2.00,21.00)(13.00,21.00)(13.00,10.00)
	\put(13.00,10.00){\vector(0,-1){4.00}}
	\end{picture}
}}
{+}\!
\text{\raisebox{-12pt}[25pt]{
	\unitlength=0.70mm
	\linethickness{0.4pt}
	\begin{picture}(26.00,16.33)
	\put(0.00,5.00){\line(1,0){26.00}}
	\put(2.00,5.00){\circle*{1.33}}
	\put(13.00,5.00){\circle*{1.33}}
	\put(24.00,5.00){\circle*{1.33}}
	\put(2.00,1.33){\makebox(0,0)[cc]{\tiny\it1}}
	\put(13.00,1.33){\makebox(0,0)[cc]{\tiny\it2}}
	\put(24.00,1.33){\makebox(0,0)[cc]{\tiny\it3}}
	\put(7.33,11.33){\circle*{1.33}}
	\put(7.33,11.33){\vector(1,-1){5.5}}
	\put(7.33,11.33){\vector(-1,-1){5.5}}
	\put(13,17){\circle*{1.33}}
	\put(13,17){\vector(1,-1){11.2}}
	\put(13,17){\vector(-1,-1){5.1}}
	\put(2.00,21.00){\line(1,0){18.00}}
	\bezier{80}(20.00,21.00)(24.00,21.00)(24.00,17.00)
	\put(24.00,17.00){\vector(0,-1){11}}
	\end{picture}
}}.
\end{align*}
Namely, there is the fragment such that the derivation acts on the content $\cP$ of the Jacobiator's two internal vertices, and there are three fragments such that the arrow falls on the first, second, or third argument of the Jacobiator.
Now it is readily seen that the action of a derivative~$\dd_i$ on an argument of the Jacobiator 
amounts to an appropriate redefinition of that 
argument: $\dd_i\bigl(\Jac_\cP(\it1,\it2,\it3)\bigr)=$
\[
\underbrace{\bigl(\dd_i\Jac_\cP\bigr)(\mathit{1},\mathit{2},\mathit{3})}_{=0}+
\underbrace{\Jac_\cP\bigl(\dd_i(\mathit{1}),\mathit{2},\mathit{3}\bigr)}_{=0}+
\underbrace{\Jac_\cP\bigl(\mathit{1},\dd_i(\mathit{2}),\mathit{3}\bigr)}_{=0}+
\underbrace{\Jac_\cP\bigl(\mathit{1},\mathit{2},\dd_i(\mathit{3})\bigr)}_{=0}=0.
\]
Let us introduce a name for the (class of) graphs which make the first term --\,out of four\,-- in the expansion of Leibniz rule in the above formula.

\begin{define}
A \emph{Leibniz graph} is a graph whose vertices are either sinks, or the sources for two arrows, 
or the Jacobiator (which is a source for three arrows). 
There must be at least one Jacobiator vertex.
The three arrows originating from a Jacobiator vertex must land on three distinct vertices.
Each edge falling on a Jacobiator works by the Leibniz rule on the two internal vertices in~it.
\end{define}

An example of a Leibniz graph is given in Fig.~\ref{FigSample}.
Every Leibniz graph can be expanded to a sum of Kontsevich graphs, by expanding both the Leibniz rule(s) and all copies of the Jacobiator; e.g. see \eqref{EqStayVanish}.
In this way Leibniz graphs also encode (poly)differential operators, depending on the bi\/-\/vector $\cP$ and the tri\/-\/vector $\Jac(\cP)$.
\begin{figure}[htb]
\begin{minipage}{0.3\textwidth}
\begin{align*}
\unitlength=1mm
\special{em:linewidth 0.4pt}
\linethickness{0.4pt}
\begin{picture}(40.67,35.00)
\put(15.00,20.00){\framebox(20.00,10.00)[cc]{$\bullet\quad\bullet$}}
\put(25.00,20.00){\vector(0,-1){15.00}}
\put(18.00,20.00){\vector(-1,-2){5.00}}
\put(32.00,20.00){\vector(1,-3){5.00}}
\put(13.00,10.00){\vector(0,-1){5.00}}
\put(13.00,10.00){\vector(-1,0){8.00}}
\put(13.00,0.00){\makebox(0,0)[cb]{\tiny(\ )}}
\put(25.00,0.00){\makebox(0,0)[cb]{\tiny(\ )}}
\put(5.00,10.00){\vector(0,1){8.00}}
\put(5.00,18.00){\vector(1,-1){8.00}}
\put(37.00,0.00){\makebox(0,0)[cb]{\tiny(\ )}}
\bezier{100}(5.00,10.00)(9.00,5.00)(13.00,10.00)
\put(13.00,10.00){\vector(1,1){0.00}}
\put(5.00,18.00){\line(0,1){12}}
\bezier{80}(5.00,30.00)(5.00,35.00)(10.00,35.00)
\put(10.00,35.00){\line(1,0){5.00}}
\bezier{80}(15.00,35.00)(20.00,35.00)(20.00,30.00)
\put(20.00,30.00){\vector(0,-1){0.00}}
\put(13.00,10.00){\circle*{1}}
\put(5.00,10.00){\circle*{1}}
\put(5.00,18.00){\circle*{1}}
\end{picture}
\end{align*}
\end{minipage}
\small
\begin{minipage}{0.4\textwidth}
\begin{itemize}
\item There is a cycle, 
\item there is a loop, 
\item there are no tadpoles in this graph,
\item an arrow falls back on $\Jac(\cP)$, 
\item and $\Jac(\cP)$ does not stand on all of the three sinks.
\end{itemize}
\end{minipage}
\normalsize
\caption{This is an example of Leibniz graph of which the factorizing operators
can consist.}\label{FigSample}
\end{figure}

\begin{proposition}\label{PropLeibnizGraphZero}
For every Poisson bi\/-\/vector~$\cP$ the value --\,at the Jacobiator $\Jac(\cP)$\,-- of every (poly)\/dif\-fe\-ren\-ti\-al operator encoded by the Leibniz graph(s) is zero.
\end{proposition}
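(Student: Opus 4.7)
The proof rests on two observations. First, because $\cP$ is Poisson, the tri\/-vector $\Jac(\cP) = \schouten{\cP,\cP}$ vanishes identically on $N^n$, so by Lemma~\ref{Lemma} every component $\Jac_\cP^{ijk}(\bx)$ is identically zero, and hence so is every partial derivative of it. Second, by the very definition of a Leibniz graph, each edge falling on a Jacobiator vertex differentiates only the two internal bivectors constituting that Jacobiator and does not act on its three outgoing arguments; equivalently, only the ``first fragment'' from the four\/-term Leibniz expansion displayed just before Figure~\ref{FigSample} is retained in the encoding.

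My plan is then to pick any Leibniz graph $G$ (by definition it contains at least one Jacobiator vertex) and expand the polydifferential operator it encodes as a finite sum over all index assignments to the arrows of $G$. Each summand is a product, running over the internal vertices of~$G$, of partial derivatives of the corresponding tensor coefficients. At an ordinary bi\/-vector vertex the factor is a partial derivative of some $\cP^{ab}(\bx)$; at a Jacobiator vertex $v$ with outgoing indices $(i_v,j_v,k_v)$ and multi\/-index $I_v$ recording the indices of arrows incoming to $v$, the Leibniz\/-rule prescription above contributes the factor $\partial^{I_v}\bigl(\Jac_\cP^{i_v j_v k_v}(\bx)\bigr)$, with no derivation having strayed to the Jacobiator's outgoing arguments. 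By the first observation this factor is identically zero, hence each summand is zero, hence the total sum is zero. Evaluating on any triple of test functions placed in the sinks completes the proof.

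The main obstacle is not conceptual but a matter of bookkeeping: one must verify that the Leibniz\/-rule convention at each Jacobiator vertex manifests correctly as $\partial^{I_v}\Jac_\cP^{i_v j_v k_v}(\bx)$, i.e., that the incoming derivations do not leak onto the outgoing arguments. This is precisely the fragment singled out by the definition of a Leibniz graph; the three remaining fragments in the four\/-term expansion (in which a derivation would fall on an argument of the Jacobiator) are simply not encoded by a Leibniz graph in the first place. Once this accounting is in place, the vanishing of $G(\cP,\Jac(\cP))$ reduces to the triviality $\partial^{I}(0)=0$.
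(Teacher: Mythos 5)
Your proof is correct and follows essentially the same route the paper takes (the paper leaves the argument implicit in the discussion surrounding Lemma~\ref{Lemma} and the four\/-fragment Leibniz expansion preceding the definition of Leibniz graphs): every summand of the encoded operator carries a factor $\partial^{I_v}\bigl(\Jac_\cP^{i_vj_vk_v}\bigr)$, and since $\cP$ is Poisson each coefficient $\Jac_\cP^{ijk}$ vanishes identically by Lemma~\ref{Lemma}, so every such factor is a derivative of the zero function. Your explicit bookkeeping of the convention that incoming arrows act only on the two internal vertices of the Jacobiator is exactly the point the paper's definition of a Leibniz graph is designed to secure, so there is no gap.
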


\begin{theor}\label{ThMain}
There exists a polydifferential operator 
\[
\Diamond \in \text{\textup{PolyDiff}}\,\bigl(\Gamma(\bigwedge\nolimits^{2}TN^{n})\times \Gamma(\bigwedge\nolimits^{3}TN^{n}) \rightarrow \Gamma(\bigwedge\nolimits^{3}TN^{n}) \bigr)
\]
which solves the factorization problem
\begin{equation}\label{EqFactor}
\schouten{\cP,\cQ_{1:6}(\cP)}(f,g,h)=\Diamond\,\bigl(\cP,\Jac_\cP{}(\cdot,\cdot,\cdot)\bigr)(f,g,h).
\end{equation}
The 
polydifferential operator~$\Diamond$ is realised using Leibniz graphs in
formula~\eqref{EqSol}, see p.~\textup{\pageref{pSolution}} below.
\end{theor}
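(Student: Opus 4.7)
The plan is to convert the identity~\eqref{EqFactor} into a finite linear algebraic system and solve it. The left\/-\/hand side is already a fixed tri\/-\/vector: by virtue of Fig.~\ref{Fig9Skew} it is a known $\BBQ$-\/linear combination of $9$~skew\/-\/symmetric expressions built from the Kontsevich graphs with $4$~internal vertices (each carrying a copy of~$\cP$), $3$~ordered sinks, and with each internal vertex being the tail of two ordered out\/-\/going edges. After skew\/-\/symmetrisation over the sinks $f,g,h$ and expansion of the Leibniz rule around the top wedge in~\eqref{EqLHSGraphs}, I would list the $39$~resulting Kontsevich graphs with their rational coefficients (Table~\ref{TableLHS}) and store them in a canonical form modulo the symmetries of~$\cP$ and relabelling of sinks.

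On the right\/-\/hand side I would write the ansatz $\Diamond=\sum_\alpha c_\alpha\, L_\alpha$, where $\{L_\alpha\}$ runs over all Leibniz graphs whose ``shape'' is forced by matching the left\/-\/hand side: namely, each~$L_\alpha$ must contain precisely one Jacobiator vertex (source of three arrows), an appropriate number of bi\/-\/vector vertices (so that the total polynomial degree in~$\cP$ is the same as in $\schouten{\cP,\cQ_{1:6}(\cP)}$, namely four copies of~$\cP$ plus one copy of~$\Jac(\cP)$), exactly three ordered sinks, and the same total number of edges as in the left\/-\/hand side. Enumerating all such graphs up to evident symmetries yields a finite list with $1132$~unknowns~$c_\alpha$. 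Next, each $L_\alpha$ is expanded into Kontsevich graphs by applying the Leibniz rule to every edge falling on the Jacobiator (four terms per incoming arrow) and then substituting the three\/-\/term expansion~\eqref{EqJacFig} of $\Jac_\cP$ into every Jacobiator vertex; after reduction to canonical form this produces a linear map into the vector space spanned by Kontsevich graphs of the same size, giving $7{,}025$~basis graphs on which the coefficients of the $c_\alpha$ are visible.

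Equating coefficients of each basis Kontsevich graph in $\sum_\alpha c_\alpha\,L_\alpha$ with the corresponding coefficient on the left\/-\/hand side provides an inhomogeneous linear system in the~$c_\alpha$. To justify that matching coefficients graph\/-\/by\/-\/graph is legitimate, I would invoke Lemma~\ref{Lemma}: after splitting into homogeneous differential orders no cancellations between distinct Kontsevich graphs can occur at the level of the universal, coordinate\/-\/free formulas. Once the system has been solved (say, by Gaussian elimination over~$\BBQ$), the resulting~$\Diamond$ is exhibited by listing its nonzero coefficients; this is the formula~\eqref{EqSol} advertised on p.~\pageref{pSolution}. A crucial sanity check is that the fragments from Proposition~\ref{PropLeibnizGraphZero} (Leibniz graphs which expand to zero already for an arbitrary bi\/-\/vector~$\cP$, cf.\ Remark~\ref{RemGamma0}) form the kernel of the map $\{c_\alpha\}\mapsto\sum_\alpha c_\alpha L_\alpha$; modding out by this kernel turns the system into one whose solvability is equivalent to the theorem. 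Verification that a given~$\Diamond$ works is then immediate: one simply expands both sides and checks the equality of Kontsevich\/-\/graph expansions.

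The main obstacle is computational rather than conceptual: handling a $7{,}025\times 1132$ system with exact rational arithmetic, ensuring that the canonical form on Kontsevich graphs correctly identifies isomorphic graphs (including the sign arising from the ordering L~$\prec$~R and from the enumeration of sinks, cf.\ Remark~\ref{RemSchoutenGraph}), and pruning graphs that automatically vanish (tadpoles, double edges, the configuration~\eqref{EqWedgeOnTwoWedges}). A secondary difficulty is that the system is heavily under\/-\/determined, so the output~$\Diamond$ is not canonical; the desirable feature, achieved in the paper's solution~\eqref{EqSol}, is that only $8$ of the $1132$ coefficients are nonzero, which is what makes the final verification of $\schouten{\cP,\cQ_{1:6}(\cP)}=\Diamond(\cP,\Jac(\cP))$ short enough to present explicitly in the proof of Theorem~\ref{ThMain}.
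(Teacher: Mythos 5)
Your proposal follows essentially the same route as the paper: an undetermined\/-\/coefficient ansatz over all admissible Leibniz graphs of differential order $(1,1,1)$ in the sinks, expansion into Kontsevich graphs, solution of the resulting $7{,}025$\/-\/equation inhomogeneous linear system for the $1132$ coefficients, and a final direct verification that the explicit operator~$\Diamond$ of~\eqref{EqSol} reproduces the $39$ graphs of the left\/-\/hand side --- which is exactly what the paper presents as the proof proper. (One small slip: the Kontsevich graphs on the left\/-\/hand side have \emph{five} internal vertices, not four, since the Schouten bracket $\schouten{\cP,\,\cdot\,}$ adds a copy of~$\cP$ to the four vertices of each tetrahedron.)
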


\begin{cor}[Main result]\label{CorMain}
Whenever a bi-vector $\cP$ on an affine real manifold $N^{n}$ is Poisson, the deformation $\cP + \varepsilon \cQ_{1:6}(\cP)+ \bar{o}(\varepsilon
)$ using the Kontsevich tetrahedral flow is infinitesimally Poisson.
\end{cor}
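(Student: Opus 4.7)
\smallskip\noindent\textbf{Proof proposal.}
The strategy is to search for the operator $\Diamond$ in the finite\/-\/dimensional space spanned by all admissible Leibniz graphs with three ordinary bi\/-\/vector vertices, one Jacobiator vertex, and three ordered sinks $f,g,h$, and to determine its coefficients by matching, term by term, against the $39$\/-\/graph expansion of the left\/-\/hand side recorded in Figure~\ref{FigLHS} and Table~\ref{TableLHS}. The count of internal vertices is forced: $\schouten{\cP,\cQ_{1:6}(\cP)}$ contains five copies of~$\cP$, and an expanded Leibniz graph with $k$ bi\/-\/vector vertices plus one Jacobiator vertex contributes $k+2$ copies of~$\cP$ after the Jacobiator is unfolded via~\eqref{EqJacFig}, so $k=3$.

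First I would enumerate, modulo graph isomorphism and the trivial vanishing mechanisms of Remark~\ref{RemGamma0} (double edges, wedge\/-\/on\/-\/two\/-\/wedges), the complete list of Leibniz graphs of the above combinatorial type, each bi\/-\/vector vertex emitting an ordered pair $\mathrm{L}\prec\mathrm{R}$ of arrows and the Jacobiator vertex emitting three arrows landing on three \emph{distinct} targets. This yields the basis of $1132$ candidate Leibniz monomials and a corresponding vector of unknowns.

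Second, I would expand each Leibniz graph into its Kontsevich\/-\/graph content: every arrow arriving at the Jacobiator distributes over its two internal bi\/-\/vector copies by the Leibniz rule, and the Jacobiator itself unfolds into the three\/-\/term sum~\eqref{EqJacFig}. I reduce each resulting Kontsevich graph (five bi\/-\/vector vertices, three sinks) to a canonical form by relabelling internal vertices and normalizing the $\mathrm{L}\rightleftarrows\mathrm{R}$ flip at each bi\/-\/vector, tracking the signs carefully. The image then lives in a space of $7{,}025$ Kontsevich\/-\/graph normal forms. In parallel, I would express the left\/-\/hand side $\schouten{\cP,\cQ_{1:6}(\cP)}$ in the same canonical basis by applying the Schouten\/-\/bracket graphical rule of Remark~\ref{RemSchoutenGraph} to the terms in Figure~\ref{FigLHS}. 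Equating the two sides coefficient by coefficient produces an inhomogeneous linear system in the $1132$ unknowns with $7{,}025$ equations, which I solve by Gaussian elimination; the resulting solution is sparse, with only $8$ nonzero entries, giving the operator~$\Diamond$ displayed in~\eqref{EqSol}.

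The main obstacle is the combinatorial bookkeeping: correctly handling graph isomorphisms, the $\mathrm{L}\rightleftarrows\mathrm{R}$ signs at each bi\/-\/vector, the re\/-\/ordering signs from Remark~\ref{RemSchoutenGraph}, and the overall skew\/-\/symmetrization $\sum_{\sigma\in S_3}(-)^\sigma$ over sinks --- an error in any of these sign conventions propagates through all $7{,}025$ equations. Once $\Diamond$ has been produced in closed form, the proof of Theorem~\ref{ThMain} reduces to the direct verification that expanding the eight Leibniz graphs appearing in~\eqref{EqSol} reproduces the $39$ Kontsevich graphs of Figure~\ref{FigLHS}; this check is elementary. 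Corollary~\ref{CorMain} is then immediate: by Proposition~\ref{PropLeibnizGraphZero} the right\/-\/hand side of~\eqref{EqFactor} vanishes identically on any Poisson bi\/-\/vector since $\Jac(\cP)=0$, so~\eqref{EqWhichMechanism} holds for every Poisson structure on every affine manifold~$N^n$, which by Definition~\ref{DefInfPoisson} says exactly that the deformation $\cP+\varepsilon\cQ_{1:6}(\cP)+\bar{o}(\varepsilon)$ is infinitesimally Poisson.
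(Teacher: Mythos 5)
Your proposal is correct and follows essentially the same route as the paper: an ansatz over all admissible Leibniz graphs linear in the Jacobiator with undetermined coefficients, reduction to the overdetermined linear system of $7{,}025$ equations in $1132$ unknowns, the sparse solution~\eqref{EqSol}, direct verification of the factorization~\eqref{EqFactor}, and then the conclusion via Proposition~\ref{PropLeibnizGraphZero} that $\schouten{\cP,\cQ_{1:6}(\cP)}$ vanishes on every Poisson bi\/-\/vector. Your vertex count forcing $k=3$ wedges and your closing deduction of the corollary from Theorem~\ref{ThMain} match the paper's argument exactly.
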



\begin{rem}
It is readily seen that the Kontsevich tetrahedral flow $\dot{\cP}=\cQ_{1:6}(\cP)$ is well defined on the space of Poisson bi\/-\/vectors on a given affine manifold~$N^n$. Indeed, it does not depend on a choice of coordinates up to their arbitrary affine reparametrisations. In other words, the velocity $\dot{\cP}{\bigr|}_{\boldsymbol{u}\in N^n}$ does not depend on the choice of a chart $\mathcal{U}\ni\boldsymbol{u}$ from an atlas in which only \emph{affine} changes of variables are allowed. (Let us remember that affine manifolds can of course be topologically nontrivial.) 

Suppose however that a given affine structure on the manifold~$N^n$ is extended to a larger atlas on it; for the sake of definition let that atlas be a smooth one. Assume that the smooth structure is now reduced --\,by discarding a number of charts\,-- to another affine structure on the same manifold. The tetrahedral flow $\dot{\cP}=\cQ_{1:6}(\cP)$ which one initially had can be contrasted with the tetrahedral flow $\dot{\tilde{\cP}}=\cQ_{1:6}(\tilde{\cP})$ which one finally obtains for the Poisson bi\/-\/vector~$\smash{\tilde{\cP}{\bigr|}_{\tilde{\boldsymbol{u}}(\boldsymbol{u})} = \cP{\bigr|}_{\boldsymbol{u}}}$ in the course of a nonlinear change of coordinates on~$N^n$. Indeed, the respective velocities~$\dot{\cP}$ and~$\dot{\tilde{\cP}}$ can be different whenever they are 
expressed by using 
essentially different parametrisations of a neighbourhood of a point~$\boldsymbol{u}$ in~$N^n$. For example, the tetrahedral flow vanishes identically when expressed in the Darboux canonical variables on a chart in a symplectic manifold. But after a nonlinear 
transformation, the right\/-\/hand side $\cQ_{1:6}(\tilde{\cP})$ can become nonzero at the same points of that Darboux chart.

This shows that an affine structure on the manifold~$N^n$
is a necessary part of the input data for construction of the Kontsevich tetrahedral flows~$\dot{\cP}=\cQ_{1:6}(\cP)$.
\end{rem}

\section{Solution of the factorization problem}\label{SecProof}
\noindent%
Expanding the Leibniz rules in $\schouten{\cP,\cQ_{1:6}(\cP)}$, we obtain the sum of $39$~graphs with $5$~internal vertices and $3$~sinks (so that from Figure~\ref{FigLHS} we produce Table~\ref{TableLHS}, see page~\pageref{TableLHS} below). By construction, the Schouten bracket $\schouten{\cP,\cQ_{1:6}(\cP)} \in \Gamma(\bigwedge^3 TN^n)$ is a tri\/-\/vector on the underlying manifold $N^n$, that is, it is a totally antisymmetric tri\/-\/linear polyderivation $C^\infty(N^n) \times C^\infty(N^n) \times C^\infty(N^n) \to C^\infty(N^n)$. 
At the same time, we seek to recognize the tri\/-\/vector $\schouten{\cP,\cQ_{1:6}(\cP)}$ as the result of application of a (poly)\/differential operator~$\Diamond$ (see~\eqref{EqFactor} in Theorem~\ref{ThMain}) to the Jacobiator~$\Jac(\cP)$ (see~\eqref{EqJacFig} on p.~\pageref{EqJacFig}). 
%

We now explain how the operator~$\Diamond$ is found.\footnote{Another method for solving the factorization problem is outlined in Appendix~\ref{AppPerturb}.}
The ansatz for $\Diamond$ is the sum -- with undetermined coefficients -- of all (separately vanishing) Leibniz graphs containing one Jacobiator and three wedges, and having differential order $(1,1,1)$ with respect to the sinks (see Fig.~\ref{FigSomeLeibniz}). 
We thus have $28,202$ unknowns introduced (counted with possible repetitions of graphs which they refer to).  
Expanding all the Leibniz rules and Jacobiators, we obtain a sum of Kontsevich graphs with $5$ internal vertices on $3$ sinks.
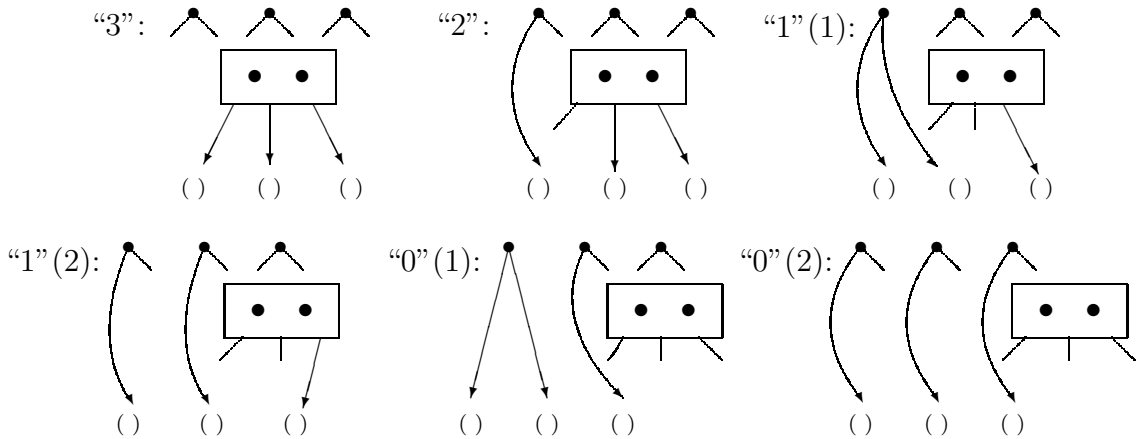
\begin{figure}[htb]
\begin{center}
\unitlength=1mm
\special{em:linewidth 0.4pt}
\linethickness{0.4pt}
\begin{picture}(33.00,30.67)
\put(0.00,30.00){\makebox(0,0)[ct]{``3":}}
\put(10.00,30.00){\circle*{1.33}}
\put(20.00,30.00){\circle*{1.33}}
\put(30.00,30.00){\circle*{1.33}}
\bezier{16}(10.00,30.00)(8.00,28.00)(7.00,27.00)
\bezier{16}(10.00,30.00)(12.00,28.00)(13.00,27.00)
\bezier{16}(20.00,30.00)(18.00,28.00)(17.00,27.00)
\bezier{16}(20.00,30.00)(22.00,28.00)(23.00,27.00)
\bezier{16}(30.00,30.00)(28.00,28.00)(27.00,27.00)
\bezier{16}(30.00,30.00)(32.00,28.00)(33.00,27.00)
\put(13.67,18.00){\framebox(15,7.00)[cc]{$\bullet\quad\bullet$}}
\put(15.33,18.00){\vector(-1,-2){4.00}}
\put(20.00,18.00){\vector(0,-1){8.00}}
\put(25.67,18.00){\vector(1,-2){4.00}}
\put(10.00,5.33){\makebox(0,0)[cb]{\tiny(\ )}}
\put(20.00,5.33){\makebox(0,0)[cb]{\tiny(\ )}}
\put(30.67,5.33){\makebox(0,0)[cb]{\tiny(\ )}}
\end{picture}
\qquad
\unitlength=1.00mm
\special{em:linewidth 0.4pt}
\linethickness{0.4pt}
\begin{picture}(33.00,30.66)
\put(0.00,30.00){\makebox(0,0)[ct]{``2":}}
\put(10.00,30.00){\circle*{1.33}}
\put(20.00,30.00){\circle*{1.33}}
\put(30.00,30.00){\circle*{1.33}}
\bezier{16}(10.00,30.00)(12.00,28.00)(13.00,27.00)
\bezier{16}(20.00,30.00)(18.00,28.00)(17.00,27.00)
\bezier{16}(20.00,30.00)(22.00,28.00)(23.00,27.00)
\bezier{16}(30.00,30.00)(28.00,28.00)(27.00,27.00)
\bezier{16}(30.00,30.00)(32.00,28.00)(33.00,27.00)
\put(14.33,18.00){\framebox(15,7.00)[cc]{$\bullet\quad\bullet$}}
\put(20.00,18.00){\vector(0,-1){9.00}}
\put(25.67,18.00){\vector(1,-2){4.00}}
\put(10.00,5.33){\makebox(0,0)[cb]{\tiny(\ )}}
\put(20.00,5.33){\makebox(0,0)[cb]{\tiny(\ )}}
\put(30.67,5.33){\makebox(0,0)[cb]{\tiny(\ )}}
\bezier{96}(10.00,30.00)(3.00,20.00)(10.00,10.00)
\put(9.67,10.67){\vector(2,-3){0.67}}
\bezier{16}(15.00,18.00)(13.00,15.67)(12.00,14.67)
\end{picture}
\qquad
\unitlength=1.00mm
\special{em:linewidth 0.4pt}
\linethickness{0.4pt}
\begin{picture}(33.00,30.66)
\put(0.00,30.00){\makebox(0,0)[ct]{``1"(1):}}
\put(10.00,30.00){\circle*{1.33}}
\put(20.00,30.00){\circle*{1.33}}
\put(30.00,30.00){\circle*{1.33}}
\bezier{16}(20.00,30.00)(18.00,28.00)(17.00,27.00)
\bezier{16}(20.00,30.00)(22.00,28.00)(23.00,27.00)
\bezier{16}(30.00,30.00)(28.00,28.00)(27.00,27.00)
\bezier{16}(30.00,30.00)(32.00,28.00)(33.00,27.00)
\put(16.00,18.00){\framebox(15,7.00)[cc]{$\bullet\quad\bullet$}}
\put(25.67,18.00){\vector(1,-2){4.67}}
\put(10.00,5.33){\makebox(0,0)[cb]{\tiny(\ )}}
\put(20.00,5.33){\makebox(0,0)[cb]{\tiny(\ )}}
\put(30.67,5.33){\makebox(0,0)[cb]{\tiny(\ )}}
\bezier{96}(10.00,30.00)(3.00,20.00)(10.00,10.00)
\put(9.67,10.67){\vector(2,-3){0.67}}
\bezier{16}(19.00,18.00)(17.00,15.67)(16.00,14.67)
\bezier{13}(22.00,18.00)(22.00,15.67)(22.00,14.67)
\bezier{100}(10.00,30.00)(8.67,20.00)(17.00,9.67)
\put(16.33,10.33){\vector(1,-1){0.67}}
\end{picture}
\unitlength=1mm
\special{em:linewidth 0.4pt}
\linethickness{0.4pt}
\begin{picture}(37.67,30.67)
\put(0.00,30.00){\makebox(0,0)[ct]{``1"(2):}}
\put(10.00,30.00){\circle*{1.33}}
\put(20.00,30.00){\circle*{1.33}}
\put(30.00,30.00){\circle*{1.33}}
\put(22.67,18.00){\framebox(15.00,7.00)[cc]{$\bullet\quad\bullet$}}
\put(10.00,5.00){\makebox(0,0)[cb]{\tiny(\ )}}
\put(21.00,5.00){\makebox(0,0)[cb]{\tiny(\ )}}
\put(32.00,5.00){\makebox(0,0)[cb]{\tiny(\ )}}
\bezier{88}(10.00,30.00)(5.00,18.00)(10.00,10.00)
\bezier{16}(10.00,30.00)(12.00,28.00)(13.00,27.00)
\bezier{88}(20.00,30.00)(15.00,18.00)(20.00,10.00)
\bezier{16}(20.00,30.00)(22.00,28.00)(23.00,27.00)
\bezier{16}(30.00,30.00)(28.00,28.00)(27.00,27.00)
\bezier{16}(30.00,30.00)(32.00,28.00)(33.00,27.00)
\bezier{16}(25.00,18.00)(23.00,16.00)(22.00,15.00)
\bezier{12}(30.00,18.00)(30.00,16.00)(30.00,15.00)
\put(35.33,18.00){\vector(-1,-4){2.33}}
\put(10.33,9.67){\vector(1,-2){0.33}}
\put(20.33,9.67){\vector(1,-2){0.33}}
\end{picture}
\qquad
\unitlength=1mm
\special{em:linewidth 0.4pt}
\linethickness{0.4pt}
\begin{picture}(38.00,30.67)
\put(0.00,30.00){\makebox(0,0)[ct]{``0"(1):}}
\put(10.00,30.00){\circle*{1.33}}
\put(20.00,30.00){\circle*{1.33}}
\put(30.00,30.00){\circle*{1.33}}
\put(5.00,5.00){\makebox(0,0)[cb]{\tiny(\ )}}
\put(15.00,5.00){\makebox(0,0)[cb]{\tiny(\ )}}
\put(25.00,5.00){\makebox(0,0)[cb]{\tiny(\ )}}
\put(10.00,30.00){\vector(-1,-4){5.00}}
\put(10.00,30.00){\vector(1,-4){5.00}}
\bezier{100}(20.00,30.00)(15.00,20.00)(25.00,10.00)
\put(24.33,10.67){\vector(1,-1){0.67}}
\put(23.00,18.00){\framebox(15,7.00)[cc]{$\bullet\quad\bullet$}}
\bezier{16}(20.00,30.00)(22.00,28.00)(23.00,27.00)
\bezier{16}(30.00,30.00)(28.00,28.00)(27.00,27.00)
\bezier{16}(30.00,30.00)(32.00,28.00)(33.00,27.00)
\bezier{16}(25.00,18.00)(24.00,16.00)(23.00,15.00)
\bezier{12}(30.00,18.00)(30.00,16.00)(30.00,15.00)
\bezier{16}(35.00,18.00)(36.67,16.33)(38.00,15.00)
\end{picture}
\quad
\unitlength=1mm
\special{em:linewidth 0.4pt}
\linethickness{0.4pt}
\begin{picture}(46.00,30.67)
\put(0.00,30.00){\makebox(0,0)[ct]{``0"(2):}}
\put(10.00,30.00){\circle*{1.33}}
\put(20.00,30.00){\circle*{1.33}}
\put(30.00,30.00){\circle*{1.33}}
\put(10.00,5.00){\makebox(0,0)[cb]{\tiny(\ )}}
\put(20.00,5.00){\makebox(0,0)[cb]{\tiny(\ )}}
\put(30.00,5.00){\makebox(0,0)[cb]{\tiny(\ )}}
\bezier{104}(10.00,30.00)(2.00,20.00)(10.00,10.00)
\bezier{104}(20.00,30.00)(12.00,20.00)(20.00,10.00)
\bezier{104}(30.00,30.00)(22.00,20.00)(30.00,10.00)
\put(10.00,10.00){\vector(1,-2){0.33}}
\put(20.00,10.00){\vector(1,-2){0.33}}
\put(30.00,10.00){\vector(1,-2){0.33}}
\bezier{16}(10.00,30.00)(12.00,28.00)(13.00,27.00)
\bezier{16}(20.00,30.00)(22.00,28.00)(23.00,27.00)
\bezier{16}(30.00,30.00)(32.00,28.00)(33.00,27.00)
\put(30.00,18.00){\framebox(15.00,7.00)[cc]{$\bullet\quad\bullet$}}
\bezier{12}(37.33,18.00)(37.33,16.00)(37.33,15.00)
\bezier{16}(43.00,18.00)(45.00,16.00)(46.00,15.00)
\bezier{16}(31.67,18.00)(29.67,16.00)(28.67,15.00)
\end{picture}
\end{center}
\caption{This is the list of all different types of Leibniz graphs 
which are linear in the Jacobiator and which 
have differential order $(1,1,1)$
with respect to the sinks. The list is ordered by the number of ground vertices on which the Jacobiator stands.
%
}\label{FigSomeLeibniz}
\end{figure}

As soon as we take into account the order $L \prec R$ and the antisymmetry of graphs under the reversion of that ordering at an internal vertex, the graphs that encode zero differential operators 
are eliminated.\footnote{The relevant algebra of sums of graphs modulo skew\/-\/symmetry and the Jacobi identity has been realized in software by the second author. 
An 
implementation 
of those tools in the problem of high\/-\/order expansion of the Kontsevich 
$\star$-\/product is explained in a separate paper, see~\cite{cpcOrder4}.}
There remain $7,025$ admissible graphs with $5$ internal vertices on $3$ sinks; the coefficient of every such graph is a linear combination of the undetermined coefficients of the Leibniz graphs.
In conclusion, we view \eqref{EqFactor} as the system of $7,025$ linear inhomogeneous equations for the coefficients of Leibniz graphs in the operator $\Diamond$.
Solving this linear system is a way towards a proof of our main result (which is expressed in Corollary~\ref{CorMain}).
The process of finding a solution~$\Diamond$ itself does not constitute that proof.
Therefore, the justification of the claim in Theorem~\ref{ThMain} will be performed separately.
In the meantime, using software tools, we solve the 
linear algebraic system at hand. The duplications of graph labellings are conveniently eliminated by our request for the program to find a solution with a minimal number of nonzero components.
Totally antisymmetric in tri-vector's arguments, the solution consists of $27$ Leibniz graphs, which are assimilated 
into the sum of $8$ manifestly skew\/-\/symmetric terms as follows:\label{pSolution}
\begin{multline}
\raisebox{17mm}{$\Diamond\quad=\quad$}
\unitlength=1mm
\special{em:linewidth 0.4pt}
\linethickness{0.4pt}
\begin{picture}(20.33,35.67)
\put(0.00,10.00){\framebox(20.00,10.00)[cc]{$\bullet\quad\bullet$}}
\put(3.00,10.00){\vector(-2,-3){3.33}}
\put(10.00,10.00){\vector(0,-1){5.00}}
\put(17.00,10.00){\vector(2,-3){3.33}}
\put(0.00,0.00){\makebox(0,0)[cb]{\tiny(\ )}}
\put(10.00,0.00){\makebox(0,0)[cb]{\tiny(\ )}}
\put(20.00,0.00){\makebox(0,0)[cb]{\tiny(\ )}}
\put(3.00,35.00){\vector(0,-1){15.00}}
\put(3.00,35.00){\vector(1,-1){7.00}}
\put(10.00,28.00){\vector(1,1){7.00}}
\put(17.00,35.00){\vector(-1,0){14.00}}
\put(10.00,28.00){\vector(0,-1){8.00}}
\put(17.00,35.00){\vector(0,-1){15.00}}
\put(17.00,35.00){\circle*{1}}
\put(3.00,35.00){\circle*{1}}
\put(10.00,28.00){\circle*{1}}
\end{picture}
\quad
\raisebox{17mm}{${}+ 3 \sum\limits_{\tau\in S_2}(-)^\tau$}
\quad
%
\unitlength=1mm
\special{em:linewidth 0.4pt}
\linethickness{0.4pt}
\begin{picture}(23.33,35.00)
\put(0.00,10.00){\vector(0,-1){5.00}}
\put(20.00,10.00){\vector(-1,0){20.00}}
\put(10.00,11.00){\line(0,1){9}}
\put(10.00,9.00){\vector(0,-1){4.00}}
\put(20.00,10.00){\vector(0,-1){5.00}}
\put(0.00,10.00){\vector(1,1){10.00}}
\put(10.00,20.00){\vector(1,-1){9.20}}
\put(15.00,25.00){\vector(1,-3){5.00}}
\put(10.00,25.00){\vector(0,-1){5.00}}
\put(5.00,25.00){\vector(-1,-3){5.00}}
\put(0.00,25.00){\framebox(20.00,10.00)[cc]{$\bullet\quad\bullet$}}
\put(-1.67,0.33){\llap{[}}
\put(0.00,0.00){\makebox(0,0)[cb]{\tiny(\ )}}
\put(10.00,0.00){\makebox(0,0)[cb]{\tiny(\ )}}
\put(11.67,0.33){]}
\put(20,0.00){\makebox(0,0)[cb]{\tiny(\ )}}
\put(20.00,10.00){\circle*{1}}
\put(0.00,10.00){\circle*{1}}
\put(10.00,20.00){\circle*{1}}
\end{picture}
\quad
\raisebox{17mm}{${}+ 3 \sum\limits_{\circlearrowright}$}
\quad
%
\unitlength=1mm
\special{em:linewidth 0.4pt}
\linethickness{0.4pt}
\begin{picture}(23.33,35.00)
\put(0.00,10.00){\vector(0,-1){5.00}}
\put(20.00,10.00){\vector(-1,0){20.00}}
\put(0.00,10.00){\vector(2,-1){10.00}}
\put(20.00,10.00){\vector(0,-1){5.00}}
\put(10.00,20.00){\vector(-1,-1){9.2}}
\put(10.00,20.00){\vector(1,-1){9.2}}
\put(15.00,25.00){\vector(1,-3){5.00}}
\put(10.00,25.00){\vector(0,-1){5.00}}
\put(5.00,25.00){\vector(-1,-3){5.00}}
\put(0.00,25.00){\framebox(20.00,10.00)[cc]{$\bullet\quad\bullet$}}
\put(0.00,0.00){\makebox(0,0)[cb]{\tiny(\ )}}
\put(10.00,0.00){\makebox(0,0)[cb]{\tiny(\ )}}
\put(20,0.00){\makebox(0,0)[cb]{\tiny(\ )}}
\put(20.00,10.00){\circle*{1}}
\put(0.00,10.00){\circle*{1}}
\put(10.00,20.00){\circle*{1}}
\end{picture}
%
\\
\raisebox{17mm}{${}+ 3\sum\limits_{\circlearrowright}\Biggl\{$}
\hspace{-10mm}
\unitlength=1mm
\special{em:linewidth 0.4pt}
\linethickness{0.4pt}
\begin{picture}(40.67,35.00)
\put(30.00,10.00){\vector(1,-1){5.00}}
\put(40.00,10.00){\vector(-1,0){10.00}}
\put(35.00,15.00){\vector(-1,-1){5.00}}
\put(35.00,15.00){\vector(1,-1){5.00}}
\put(30.00,10.00){\line(0,1){10.00}}
\put(15.00,20.00){\framebox(20.00,10.00)[cc]{$\bullet\quad\bullet$}}
\put(32.00,20.00){\vector(1,-2){2.33}}
\put(25.00,20.00){\vector(0,-1){15.00}}
\put(18.00,20.00){\vector(-1,-3){5.00}}
\put(13.00,0.00){\makebox(0,0)[cb]{\tiny(\ )}}
\put(25.00,0.00){\makebox(0,0)[cb]{\tiny(\ )}}
\put(36.00,0.00){\makebox(0,0)[cb]{\tiny(\ )}}
\put(40.00,10.00){\line(0,1){20.00}}
\bezier{15}(30.00,20.00)(30.00,25)(30.00,30.00)
\bezier{40}(40.00,30.00)(40.00,35.00)(37,35.00)
\bezier{40}(37,35.00)(33.00,35.00)(33.00,30.00)
\put(33.00,31.00){\vector(0,-1){1.00}}
\bezier{40}(30.00,30.00)(30.00,35.00)(25.00,35.00)
\bezier{40}(25.00,35.00)(20.00,35.00)(20.00,30.00)
\put(20.00,31.00){\vector(0,-1){1.00}}
\put(35.00,15.00){\circle*{1}}
\put(30.00,10.00){\circle*{1}}
\put(40.00,10.00){\circle*{1}}
\end{picture}
\quad
\raisebox{17mm}{$+$}
\quad
%
\unitlength=1mm
\special{em:linewidth 0.4pt}
\linethickness{0.4pt}
\begin{picture}(30.00,32.00)
\put(0.00,15.00){\framebox(20.00,10.00)[cc]{$\bullet\quad\bullet$}}
\put(15.00,10.00){\vector(1,-1){5.00}}
\put(25.00,5.00){\vector(-2,1){10.00}}
\put(15.00,10.00){\vector(2,1){10.00}}
\put(10.00,15.00){\vector(0,-1){10.00}}
\put(3.00,15.00){\vector(-1,-3){3.33}}
\put(15.00,15.00){\vector(0,-1){4.67}}
\put(25.00,15.00){\line(0,1){10.00}}
\put(25.00,15.00){\vector(0,-1){10.00}}
\bezier{40}(25.00,25.00)(25.00,30.00)(20.00,30.00)
\bezier{40}(20.00,30.00)(15.00,30.00)(15.00,25.00)
\put(15.00,26.00){\vector(0,-1){1.00}}
\bezier{40}(25.00,5.00)(30.00,5.00)(30.00,10.00)
\put(30.00,10.00){\line(0,1){17.00}}
\bezier{40}(30.00,27.00)(30.00,32.00)(25.00,32.00)
\put(25.00,32.00){\line(-1,0){15.00}}
\bezier{48}(10.00,32.00)(5.00,32.00)(5.00,25.00)
\put(5.00,26.00){\vector(0,-1){1.00}}
\put(15.00,10.00){\circle*{1}}
\put(25.00,15.00){\circle*{1}}
\put(25.00,5.00){\circle*{1}}
\put(0.00,0.00){\makebox(0,0)[cb]{\tiny(\ )}}
\put(10.00,0.00){\makebox(0,0)[cb]{\tiny(\ )}}
\put(20.00,0.00){\makebox(0,0)[cb]{\tiny(\ )}}
\end{picture}
\quad
\raisebox{17mm}{$+$}
\unitlength=1mm
\special{em:linewidth 0.4pt}
\linethickness{0.4pt}
\begin{picture}(30.00,35.00)
\put(15.00,10.00){\vector(0,-1){5.5}}
\put(15.00,10.00){\vector(-4,-3){7.00}}
\put(25.00,5.00){\vector(-2,1){10.00}}
\put(25.00,15.00){\vector(-2,-1){10.00}}
\put(25.00,15.00){\vector(0,-1){10.00}}
\put(15.00,20.00){\vector(0,-1){10.00}}
\put(20.00,20.00){\vector(1,-1){5.00}}
\put(8.00,20.00){\vector(-1,-2){7.67}}
\put(0.00,0.00){\makebox(0,0)[cb]{\tiny(\ )}}
\put(7.5,0.00){\makebox(0,0)[cb]{\tiny(\ )}}
\put(15.00,0.00){\makebox(0,0)[cb]{\tiny(\ )}}
\put(5.00,20.00){\framebox(20.00,10.00)[cc]{$\bullet\quad\bullet$}}
\bezier{40}(25.00,5.00)(30.00,5.00)(30.00,10.00)
\put(30.00,10.00){\line(0,1){20.00}}
\bezier{40}(30.00,30.00)(30.00,35.00)(25.00,35.00)
\put(25.00,35.00){\line(-1,0){5.00}}
\bezier{40}(20.00,35.00)(15.00,35.00)(15.00,30.00)
\put(15.00,31.00){\vector(0,-1){1.00}}
\put(15.00,10.00){\circle*{1}}
\put(25.00,15.00){\circle*{1}}
\put(25.00,5.00){\circle*{1}}
\end{picture}
\raisebox{17mm}{\quad$\Biggr\}$}
\\
\raisebox{17mm}{${}+ 3 \sum\limits_{\sigma\in S_3}(-)^\sigma\Biggl\{$}
%
\unitlength=1mm
\special{em:linewidth 0.4pt}
\linethickness{0.4pt}
\begin{picture}(30.00,35.00)
\put(15.00,10.00){\vector(-1,-4){1.75}}
\put(15.00,10.00){\vector(2,-1){10.00}}
\put(25.00,5.00){\vector(0,-1){3.00}}
\put(25.00,15.00){\vector(-2,-1){10.00}}
\put(25.00,15.00){\vector(0,-1){10.00}}
\put(15.00,20.00){\vector(0,-1){10.00}}
\put(20.00,20.00){\vector(1,-1){5.00}}
\put(8.00,20.00){\vector(-1,-2){8.5}}
\put(-0.10,-1){\makebox(0,0)[cb]{\tiny(\ )}}
\put(25.10,-1){\makebox(0,0)[cb]{\tiny(\ )}}
\put(13.00,-1){\makebox(0,0)[cb]{\tiny(\ )}}
\put(5.00,20.00){\framebox(20.00,10.00)[cc]{$\bullet\quad\bullet$}}
\bezier{40}(25.00,5.00)(30.00,5.00)(30.00,10.00)
\put(30.00,10.00){\line(0,1){20.00}}
\bezier{40}(30.00,30.00)(30.00,35.00)(25.00,35.00)
\put(25.00,35.00){\line(-1,0){5.00}}
\bezier{40}(20.00,35.00)(15.00,35.00)(15.00,30.00)
\put(15.00,31.00){\vector(0,-1){1.00}}
\put(15.00,10.00){\circle*{1}}
\put(25.00,15.00){\circle*{1}}
\put(25.00,5.00){\circle*{1}}
\end{picture}
\quad
\raisebox{17mm}{$+$}
%
\unitlength=1mm
\special{em:linewidth 0.4pt}
\linethickness{0.4pt}
\begin{picture}(30.00,35.00)
\put(15.00,10.00){\vector(1,-4){1.5}}
\put(25.00,5.00){\vector(-2,1){10.00}}
\put(15.00,10.00){\vector(2,1){10.00}}
\put(25.00,15.00){\vector(0,-1){10.00}}
\put(15.00,20.00){\vector(0,-1){10.00}}
\put(20.00,20.00){\vector(1,-1){5.00}}
\put(8.00,20.00){\vector(-1,-2){8}}
\put(0.00,0.00){\makebox(0,0)[cb]{\tiny(\ )}}
\put(33.00,0){\makebox(0,0)[cb]{\tiny(\ )}}
\put(16.50,0.00){\makebox(0,0)[cb]{\tiny(\ )}}
\put(5.00,20.00){\framebox(20.00,10.00)[cc]{$\bullet\quad\bullet$}}
\bezier{40}(25.00,5.00)(30.00,5.00)(30.00,10.00)
\put(30.00,10.00){\line(0,1){20.00}}
\bezier{40}(30.00,30.00)(30.00,35.00)(25.00,35.00)
\put(25.00,35.00){\line(-1,0){5.00}}
\bezier{40}(20.00,35.00)(15.00,35.00)(15.00,30.00)
\put(15.00,31.00){\vector(0,-1){1.00}}
\put(25.00,15.00){\line(1,0){4.00}}
\bezier{40}(31.00,15.00)(32.9,15.00)(32.9,11.00)
\put(33.00,11.00){\line(0,-1){5.00}}
\put(33.00,6.00){\vector(0,-1){2.00}}
\put(15.00,10.00){\circle*{1}}
\put(25.00,15.00){\circle*{1}}
\put(25.00,5.00){\circle*{1}}
\end{picture}
\raisebox{17mm}{\qquad$\Biggr\}.$}
\label{EqSol} 
\end{multline}
To display the $L \prec R$ ordering at every internal vertex and to make possible the arithmetic and algebra of graphs, we use the 
notation which is explained in Appendix~\ref{AppCode}. 


\begin{rem}
\label{RemTwoSets}
We remember that the set $\{\ground{1},\ground{2},\ground{3}\}$ of
three arguments of the Jacobiator need not coincide with the set $\{f,g,h\}$ of the arguments of the tri\/-\/vector $\Diamond(\cP, \Jac(\cP))$.
Of course, the two sets can intersect; this provides 
a natural filtration for the components of solution~\eqref{EqSol}.
Namely, the number of elements in the intersection runs from three for the first term to zero in the second or third graph.
\end{rem}


In fact, 
Remark~ \ref{RemTwoSets} reveals a highly nontrivial role of the operator~$\Diamond$ 
in~\eqref{EqFactor}. 
Some of the three internal vertices of its graphs can be 
arguments of~$\Jac(\cP)$ whereas some of the other such vertices (if any) can be tails for the arrows falling on~$\Jac(\cP)$.
In retrospect, the two subsets of such vertices of~$\Diamond$ do not intersect; every vertex in the intersection, if it were nonempty, would produce a two-cycle, but there are no ``eyes'' in~\eqref{EqSol}.

\begin{proof}[Proof of Theorem~\textup{\ref{ThMain}}]
So far, we have constructed operator~\eqref{EqSol}; 
it involves a reasonably small number of Leibniz graphs
so that the factorization in~\eqref{EqFactor} can be verified 
by a straightforward calculation.
The sums in~\eqref{EqSol} contain 
$27$~Leibniz graphs.
Now expand all the Leibniz rules; this yields the sum of $201$~Kontsevich graphs with $3$~sinks and $5$~internal vertices: together with their coefficients, they are listed in Table~\ref{Table201} in Appendix~\ref{AppCode}, see page~\pageref{AppCode}. 
We claim that by collecting similar terms, one obtains the $39$~graphs from the left\/-\/hand side of~\eqref{EqFactor}, see Fig.~\ref{Fig9Skew} 
and the encoding of those graphs in Table~\ref{Tab9Skew} on page~\pageref{Tab9Skew}.
Because we are free to enumerate the five internal vertices in every graph in a way we like, and because the ordering of every pair of outgoing edges is also under our control, at once do we bring all the graphs to their normal form.\footnote{The normal form of a graph is obtained by running over the group $S_{5} \times (\mathbb{Z}_{2})^{5}$ of all the relabellings of internal vertices and swaps $L\rightleftarrows R$ of orderings at each vertex. (We recall that every swap negates the coefficient of a graph; the permutations from $S_{5}$ are responsible for encoding a given topological profile in seemingly ``different" ways.) By definition, the \emph{normal form} of a graph is the minimal sequence of five ordered pairs of target vertices viewed as $10$-\/digit base-$(3+5)$ numbers. (By convention, the three ordered sinks are enumerated $0,1,2$ and the internal vertices are the octonary digits $3,\ldots,7$.)}

It is readily seen that there are many repetitions in Table~\ref{Table201}. 
We must inspect what vanishes and what stays. Let us do a sample reasoning first. 
Namely, let us inspect 
the contribution to the left-hand side of \eqref{EqFactor} from the first term of~\eqref{EqSol}. We have that
\begin{equation}\label{EqStayVanish}
\unitlength=.71mm
\special{em:linewidth 0.4pt}
\linethickness{0.4pt}
\raisebox{-29pt}[40pt][10pt]{
\begin{picture}(20.33,35.67)
\put(0.00,10.00){\framebox(20.00,10.00)[cc]{$\bullet\quad\bullet$}}
\put(3.00,10.00){\vector(-2,-3){3.33}}
\put(10.00,10.00){\vector(0,-1){5.00}}
\put(17.00,10.00){\vector(2,-3){3.33}}
\put(3.00,35.00){\vector(0,-1){15.00}}
\put(3.00,35.00){\vector(1,-1){7.00}}
\put(10.00,28.00){\vector(1,1){7.00}}
\put(17.00,35.00){\vector(-1,0){14.00}}
\put(10.00,28.00){\vector(0,-1){8.00}}
\put(17.00,35.00){\vector(0,-1){15.00}}
\put(17.00,35.00){\circle*{1}}
\put(3.00,35.00){\circle*{1}}
\put(10.00,28.00){\circle*{1}}
\end{picture}}
\ \ {=}
\raisebox{0mm}{$\ \  \sum\limits_{\circlearrowright}\Biggl\{$}
\unitlength=.6mm
\special{em:linewidth 0.4pt}
\linethickness{0.4pt}
\!\!
\raisebox{-25pt}[10pt][10pt]{
		\begin{picture}(40.00,30.00)(10,0)
		\put(20.00,30.00){\vector(-1,-2){7.67}}
		\put(12.33,15.00){\vector(4,3){14.67}}
		\put(20.00,30.1){\vector(-4,3){0}}
		\put(27.00,26){\line(-5,3){7.00}}
		\put(20.00,29.67){\vector(1,-4){4.67}}
		\put(24.7,10.65){\vector(1,-2){3.60}}
		\put(24.7,10.65){\vector(-1,-2){3.60}}
		\put(12.33,15.00){\vector(3,-1){12.33}}
		\put(27.00,26.00){\line(-1,-6){2.2}}
		\put(24.85,12.5){\vector(-1,-4){0.2}}
		\put(28.7,19.00){\vector(1,-2){7.8}}
		\put(28.7,19){\vector(-1,-2){4}}
		\put(21.50,0.50){\makebox(0,0)[cc]{\tiny$0$}}
		\put(29.00,0.50){\makebox(0,0)[cc]{\tiny$1$}}
		\put(37.00,0.50){\makebox(0,0)[cc]{\tiny$2$}}
		\put(27.50,11.00){\makebox(0,0)[cc]{\tiny$3$}}
		\put(30.00,21.00){\makebox(0,0)[cc]{\tiny$4$}}
		\put(10.00,14.50){\makebox(0,0)[cc]{\tiny$5$}}
		\put(29.50,28.50){\makebox(0,0)[cc]{\tiny$6$}}
		\put(20.00,33.00){\makebox(0,0)[cc]{\tiny$7$}}
		\end{picture}}\hspace{-9mm}
{+}
\raisebox{-25pt}[10pt][20pt]{
	\begin{picture}(20.00,30.00)(10,0)
	\put(20.00,30.00){\vector(-1,-2){7.67}}
	\put(12.33,15.00){\vector(4,3){14.67}}
	\put(20.00,30.1){\vector(-4,3){0}}
	\put(27.00,26){\line(-5,3){7.00}}
	\put(20.00,29.67){\vector(1,-4){4.67}}
	\put(24.7,10.65){\vector(1,-2){3.60}}
	\put(24.7,10.65){\vector(-1,0){7.20}}
	\put(12.33,15.00){\vector(3,-1){12.33}}
	\put(27.00,26.00){\line(-1,-6){2.2}}
	\put(24.85,12.5){\vector(-1,-4){0.2}}
	\put(17.5,10.65){\vector(1,-2){3.60}}
	\put(17.5,10.65){\vector(-1,-2){3.60}}
	\put(13.00,0.50){\makebox(0,0)[cc]{\tiny$0$}}
	\put(20.50,0.50){\makebox(0,0)[cc]{\tiny$1$}}
	\put(28.50,0.50){\makebox(0,0)[cc]{\tiny$2$}}
	\put(14.50,11.00){\makebox(0,0)[cc]{\tiny$3$}}
	\put(27.50,11.00){\makebox(0,0)[cc]{\tiny$4$}}
	\put(10.00,14.50){\makebox(0,0)[cc]{\tiny$5$}}
	\put(29.50,27.50){\makebox(0,0)[cc]{\tiny$6$}}
	\put(20.00,33.00){\makebox(0,0)[cc]{\tiny$7$}}
	\end{picture}}
\hspace{-10mm}
\qquad+{}{3}
\raisebox{-25pt}[10pt][10pt]{
\begin{picture}(15,24)
\unitlength 1.2mm
\put(2,6){\circle*{0.33}}
\bezier{512}(2,6)(0,4)(-2,2)
\put(-2,2){\vector(-1,-1){0}}
\bezier{512}(2,6)(4,4)(6,2)
\put(6,2){\vector(1,-1){0}}
\put(8,8){\circle*{0.33}}
\bezier{512}(8,8)(10,5)(12,2)
\put(12,2){\vector(2,-3){0}}
\bezier{512}(8,8)(5,7)(2,6)
\put(2,6){\vector(-3,-1){0}}
\put(1.7431565890855176,11.022589867050925){\circle*{0.33}}
\bezier{512}(1.7431565890855176,11.022589867050925)(1.8715782945427588,8.511294933525463)(2,6)
\put(2,6){\vector(0,-1){0}}
\put(7.9300270062781175,12.995699891993429){\circle*{0.33}}
\bezier{512}(1.7431565890855176,11.022589867050925)(4.836591797681818,12.009144879522177)(7.9300270062781175,12.995699891993429)
\put(7.9300270062781175,12.995699891993429){\vector(3,1){0}}
\bezier{512}(7.9300270062781175,12.995699891993429)(7.965013503139058,10.497849945996714)(8,8)
\put(8,8){\vector(0,-1){0}}
\put(4.284789841553829,15.136022630914107){\circle*{0.33}}
\bezier{512}(7.9300270062781175,12.995699891993429)(6.107408423915974,14.065861261453769)(4.284789841553829,15.136022630914107)
\put(4.284789841553829,15.136022630914107){\vector(-2,1){0}}
\bezier{512}(4.284789841553829,15.136022630914107)(3.013973215319673,13.079306248982515)(1.7431565890855176,11.022589867050925)
\put(1.7431565890855176,11.022589867050925){\vector(-1,-2){0}}
\bezier{512}(4.284789841553829,15.136022630914107)(-2.54509791207584,13.159636322102537)(2,6)
\put(2,6){\vector(1,-1){0}}
\end{picture}}
\quad {}+{3}\!\!
\raisebox{-25pt}[10pt][10pt]{
\begin{picture}(15,24)
\unitlength 1.2mm
\put(2,6){\circle*{0.33}}
\bezier{512}(2,6)(0,4)(-2,2)
\put(-2,2){\vector(-1,-1){0}}
\bezier{512}(2,6)(4,4)(6,2)
\put(6,2){\vector(1,-1){0}}
\put(8,8){\circle*{0.33}}
\bezier{512}(8,8)(10,5)(12,2)
\put(12,2){\vector(2,-3){0}}
\bezier{512}(8,8)(5,7)(2,6)
\put(2,6){\vector(-3,-1){0}}
\put(1.7431565890855176,11.022589867050925){\circle*{0.33}}
\bezier{512}(1.7431565890855176,11.022589867050925)(1.8715782945427588,8.511294933525463)(2,6)
\put(2,6){\vector(0,-1){0}}
\put(7.9300270062781175,12.995699891993429){\circle*{0.33}}
\bezier{512}(1.7431565890855176,11.022589867050925)(4.836591797681818,12.009144879522177)(7.9300270062781175,12.995699891993429)
\put(7.9300270062781175,12.995699891993429){\vector(3,1){0}}
\bezier{512}(7.9300270062781175,12.995699891993429)(7.965013503139058,10.497849945996714)(8,8)
\put(8,8){\vector(0,-1){0}}
\put(4.284789841553829,15.136022630914107){\circle*{0.33}}
\bezier{512}(7.9300270062781175,12.995699891993429)(6.107408423915974,14.065861261453769)(4.284789841553829,15.136022630914107)
\put(4.284789841553829,15.136022630914107){\vector(-2,1){0}}
\bezier{512}(4.284789841553829,15.136022630914107)(3.013973215319673,13.079306248982515)(1.7431565890855176,11.022589867050925)
\put(1.7431565890855176,11.022589867050925){\vector(-1,-2){0}}
\bezier{512}(4.284789841553829,15.136022630914107)(12.54509791207584,15.159636322102537)(8,8)
\put(8,8){\vector(-1,-1){0}}
\end{picture}} 
\!\qquad \Biggr\}.
\end{equation}
The right\/-\/hand side of~\eqref{EqStayVanish} expands into the sum of $12$~different graphs. They are marked in the first twenty\/-\/four lines 
of Table~\ref{Table201} by $\diamondsuit_{i}, \heartsuit_{i}, \clubsuit_{i}$ and $\spadesuit_{i}$ for $1 \leqslant i \leqslant 3$, respectively; by definition, a suit 
with different values of its subscript~$i$ denotes the $i$th cyclic permutation of the ground vertices for the same graph.\footnote{By taking a graph, placing it consecutively over three cyclic permutations of its sinks' content, \emph{and} bringing the three graph encodings to their normal form, see above, one can obtain an extra sign factor in front of some of these graphs. This is due to a convention about ``minimal'' graph encoding, not signalling any mismatch in the arithmetic. For example, after the normalization such is the case with the columns in Table~\ref{TableContributions}: each column refers to a cyclic permutation of three arguments and the coefficients in every line would coincide if one encoded the graphs for the last column not using the respective minimal $10$-\/digit octonary numbers. To make all the three coefficients in each line coinciding, it is enough to swap $L\rightleftarrows R$ in one internal vertex in every graph from the third column.}
For example, the symbols $\diamondsuit_{1}, \diamondsuit_{2}, \diamondsuit_{3}$ mark the three cyclic permutations of arguments in the first term in the right\/-\/hand side of~\eqref{EqStayVanish}.
The sum of the first two terms in the right-hand side of \eqref{EqStayVanish} -- marked by $\diamondsuit_{i}$ and $\heartsuit_{i}$, respectively -- equals the sum of the first two terms in Fig.~\ref{Fig9Skew}.\footnote{We inspect further that no other graphs in Table~\ref{Table201} make any contribution to the coefficients of these two graphs.}
At the same time, the sum of the last two terms -- whose encodings with coefficients $\pm 1$ are marked by $\clubsuit_{i}$ and $\spadesuit_{i}$, respectively -- cancels against the contributions from the fourth and sixth terms in solution~\eqref{EqSol} -- with coefficients~$\pm 3$, also marked by~$\clubsuit_{i}$ and~$\spadesuit_{i}$ in the rest of Table~\ref{Table201}. In Table~\ref{TableContributions} we calculate the coefficient of each graph marked by the respective indexed symbol.
\begin{table}[htb]
\centering
\caption{The coefficients of graphs marked by the four suits.}
\label{TableContributions}
{\renewcommand{\arraystretch}{1.3}
\begin{tabular}{rr|rr|rr}
$\diamondsuit_{1}:$&$-1$&$\diamondsuit_{2}:$&$-1$&$\diamondsuit_{3}:$&$+1$\\\hline
$\heartsuit_{1}:$&$-1$&$\heartsuit_{2}:$&$-1$&$\heartsuit_{3}:$&$+1$\\\hline
$\clubsuit_{1}:$&$-1-1-1+3=0$&$\clubsuit_{2}:$&$-1-1-1+3=0$&$\clubsuit_{3}:$&$+1+1+1-3=0$\\\hline
$\spadesuit_{1}:$&$-1-1-1+3=0$&$\spadesuit_{2}:$&$-1-1-1+3=0$&$\spadesuit_{3}:$&$+1+1+1-3=0$
\end{tabular}}
\end{table}

Now, in the same way all other similar terms are collected. There remain only $39$ terms with nonzero coefficients.
One verifies that those $39$ terms are none other than the entries of Table~\ref{TableLHS}, that is, realizations of the $39$ graphs in the left-hand side of~\eqref{EqFactor}. This shows that equation~\eqref{EqFactor} holds for the operator $\Diamond$ contained in~\eqref{EqSol}. 
\end{proof}

\begin{rem}
Operator~\eqref{EqSol} is not a unique solution of factorization problem~\eqref{EqFactor}.
We claim that apart from this sum of $27$~Leibniz graphs, there is another solution which consists of $102$~Leibniz graphs; it is also linear with respect to the Jacobiator (that is, its realization in the form~$\Diamond(\cP,\Jac(\cP),\Jac(\cP))$ is not possible).
\end{rem}

\section*{Discussion}
\subsection*{Non\/-\/triviality}
A flow specified on the space of Poisson bi\/-\/vectors by using the Kon\-tse\-vich graphs can be Poisson cohomology trivial modulo a sum of Leibniz graphs that would vanish identically at any Poisson structure. However, this is not the case of the Kon\-tse\-vich tetrahedral flow $\dot{\cP}=\cQ_{1:6}(\cP)$.

\begin{proposition}
There is no $1$-vector field~$\cX$ encoded over~$N^{n}$ by the Kontsevich graphs and there is no operator~$\nabla$ encoded using the Leibniz graphs such that 
\[
\cQ_{1:6}(\cP)=\schouten{\cP,\cX}+\nabla(\cP,\Jac(\cP)).
\]
\end{proposition}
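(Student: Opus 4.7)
The plan is to reduce the statement to an inconsistent linear algebra problem, in direct parallel with the framework built for Theorem~\ref{ThMain}. First I would fix the weights: since $\cQ_{1:6}(\cP)$ is quartic in $\cP$ and $\schouten{\cP,\cdot}$ raises the degree in $\cP$ by one, the vector field~$\cX$ must be cubic in~$\cP$. So I would enumerate all topologically distinct Kontsevich graphs with three internal vertices (each a source of two $\cP$\/-edges) and one sink; using the relations from Remark~\ref{RemGamma0} (double edges, the~$L\rightleftarrows R$ antisymmetry at internal vertices), I would reduce to a finite basis $\cX_1,\ldots,\cX_M$ and write $\cX = \sum_{\alpha} c_\alpha \cX_\alpha$ with undetermined~$c_\alpha$.

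Next, I would use the graphical calculus of Remark~\ref{RemSchoutenGraph} to expand $\schouten{\cP,\cX_\alpha}$ as a sum of Kontsevich graphs with four internal vertices and two sinks, bringing every graph to the normal form used in the proof of Theorem~\ref{ThMain}. The target $\cQ_{1:6}(\cP)$ is already such a sum (just the two tetrahedra of Fig.~\ref{FigTetra} with coefficients $\tfrac14$ and $\tfrac32$). For the operator~$\nabla(\cP,\Jac(\cP))$ I would introduce an analogous ansatz: all Leibniz graphs linear in the Jacobiator and quartic in~$\cP$ whose output is a bi\/-\/vector, with undetermined coefficients~$d_\beta$; these Leibniz graphs, upon full expansion of the Jacobiator and all Leibniz rules, also produce Kontsevich graphs on four internal vertices and two sinks. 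Equating coefficients of graphs in normal form then turns the proposed identity into an inhomogeneous linear system in the unknowns $\{c_\alpha\}\cup\{d_\beta\}$; the proposition is equivalent to the assertion that this system has no solution.

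The hard part is establishing that inconsistency. I see two avenues. The software\/-\/algebraic route is to solve the system with the same tools used in section~\ref{SecProof} and expose a row\/-\/echelon contradiction, i.e.\ find an integer linear combination of equations whose left\/-\/hand side in the unknowns collapses to~$0$ while the right\/-\/hand side, coming from the coefficients of $\tfrac14\Gamma_1+\tfrac32\Gamma_2$ in Fig.~\ref{FigLHS}, is nonzero. The conceptually cleaner route is to exhibit a specific Poisson bi\/-\/vector~$\cP_0$ (for instance one of those built in~\cite{tetra16} to force $a:b=1:6$) for which $\cQ_{1:6}(\cP_0)$ is manifestly nonzero; since $\nabla(\cP_0,\Jac(\cP_0))=0$ whenever $\cP_0$ is Poisson (Proposition~\ref{PropLeibnizGraphZero}), the equation restricted to~$\cP_0$ becomes $\cQ_{1:6}(\cP_0)=\schouten{\cP_0,\cX(\cP_0)}$, i.e.\ a membership problem in the finite\/-\/dimensional subspace $\schouten{\cP_0,\Span\{\cX_\alpha(\cP_0)\}}\subset \Gamma(\bigwedge^2 TN^n)|_{\supp\cP_0}$, which can be decided by a finite number of coefficient comparisons at a chosen point.

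The main obstacle in either route is combinatorial bookkeeping: the basis $\{\cX_\alpha\}$ must be produced without duplications (a repeat of the $L\prec R$/normal\/-\/form machinery used on p.~\pageref{pSolution}), and the space of admissible Leibniz graphs contributing to~$\nabla$ is large, so one must argue --- as in the solution count behind~\eqref{EqSol} --- that no tuple of coefficients $\{d_\beta\}$ can absorb the residual discrepancy between $\cQ_{1:6}(\cP)$ and the image of $\schouten{\cP,\cdot}$. Once the linear system is assembled, the inconsistency itself is a routine certificate (a witness vector orthogonal to the coefficient columns of the unknowns but not to the right\/-\/hand side), so the proof reduces to a verification whose length is dominated by the graph enumeration rather than by any new conceptual input.
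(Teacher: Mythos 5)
Your proposal coincides with the paper's own argument: the authors establish the claim precisely by a run\/-\/through over all Kontsevich graphs with three internal vertices and one sink (the ansatz for~$\cX$) and over all Leibniz graphs with two copies of~$\cP$ and one Jacobiator in the internal vertices (the ansatz for~$\nabla$), all taken with undetermined coefficients, and by verifying that the resulting inhomogeneous linear algebraic system has no solution. The alternative route you sketch via a single concrete Poisson bi\/-\/vector is not the one the paper takes, but your primary strategy is essentially identical to theirs.
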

\noindent%
The claim is established 
by a 
run\/-\/through over all Kontsevich graphs with three internal vertices and one sink (making an ansatz for~$\cX$) and all Leibniz graphs (in the operator~$\nabla$) with two copies of~$\cP$ and one Jacobiator in the internal vertices; all such graphs of both types are taken with undetermined coefficients. The resulting inhomogeneous linear algebraic system has no solution.

\subsection*{Integrability}
By using the technique of Kontsevich graphs one can proceed with a higher order expansion of the tetrahedral deformation, 
\[
\cP \mapsto \cP + \varepsilon \cQ_{1:6}(\cP)  + \varepsilon \cR(\cP) + \dots +\bar{o}(\varepsilon^{d}) ,\quad d \geqslant 2,
\]
for Poisson structures~$\cP$. Assuming that the master\/-\/equation holds up to~$\bar{o}(\varepsilon^{d})$, 
\begin{multline}
\schouten{\cP + \varepsilon \cQ_{1:6}(\cP)  + \varepsilon \cR(\cP) + \dots +\bar{o}(\varepsilon^{d}),\cP + \varepsilon \cQ_{1:6}(\cP)  + \varepsilon \cR(\cP) + \dots +\bar{o}(\varepsilon^{d})}\doteq\bar{o}(\varepsilon^{d})\\
\text{ via }\  \schouten{\cP,\cP}=0,\label{EqMasterHighOrder}
\end{multline}
we obtain a chain of linear equations for the higher order expansion terms, namely, 
\begin{equation}\label{EqHighOrd}
2\schouten{\cP,\cR(\cP)}+\schouten{\cQ_{1:6}(\cP),\cQ_{1:6}(\cP)}\doteq 0\qquad \text{via }\ \schouten{\cP,\cP}=0,\ \text{ etc.}
\end{equation}
A solution consisting of~$\cR(\cP)$ and consecutive terms at higher powers of the deformation parameter\footnote{In every graph at~$\veps^k$ the number of internal vertices is~$3k+1$.}
can be sought using the same factorization techniques and computer\/-\/assisted proof schemes~\cite{cpcOrder4} which have been implemented in this paper~--- whenever such solution actually exists. It is clear that there can be Poisson cohomological obstructions to resolvability of cocycle conditions~\eqref{EqHighOrd}. Hence the integrability issue for the Kontsevich tetrahedral flow may be Poisson model\/-\/dependent, unlike the universal nature of such deformation's infinitesimal part.

\appendix
\section{Encoding of the solution}\label{AppCode}
\noindent%
Let $\Gamma$ be a labelled Kontsevich graph with $n$~internal and $m$~external vertices.
We assume the ground vertices of $\Gamma$ are labelled~$[0$,\ $\ldots$,\ $m-1]$ and the internal vertices are labelled~$[m$,\ $\ldots$,\ $m + n - 1]$.
We define the \emph{encoding} of~$\Gamma$ to be the \emph{prefix}~$(n,m)$, followed by 
a list of \emph{targets}. The list of targets consists of ordered pairs where the $k$th~pair ($k\geqslant0$) contains the two targets of the internal vertex number~$m+k$.

The expansion of the Schouten bracket $[\![\mathcal{P}, \mathcal{Q}_{a:b}]\!]$ for the ratio $a:b = 1:6$ depicted in Figure \ref{FigLHS} simplifies to a sum of $39$ graphs with coefficients $\pm \tfrac{1}{4}$,\ $\pm \tfrac{3}{4}$.
The encodings of these graphs, followed by their respective coefficients, are listed in Table~\ref{TableLHS}.
\begin{table}[htb]
\caption{Machine\/-\/readable encoding of Fig.~\ref{FigLHS} on p.~\pageref{FigLHS}.}\label{TableLHS}
\vskip 1em
\begin{tabular}{r l l l r p{1cm} r l l l r }
1.1 & 3 & 5 & 4 2 0 1 4 6 4 7 4 5 & $1/4$ &&
7.1 & 3 & 5 & 6 2 7 0 1 4 4 5 5 6 & $3/4$ \\
1.2 & 3 & 5 & 4 0 1 2 4 6 4 7 4 5 & $1/4$ &&
7.2 & 3 & 5 & 6 0 7 1 2 4 4 5 5 6 & $3/4$ \\
1.3 & 3 & 5 & 4 1 2 0 4 6 4 7 4 5 & $1/4$ &&
7.3 & 3 & 5 & 6 1 7 2 0 4 4 5 5 6 & $3/4$ \\
& & & & &&
& & & & \\
2.1 & 3 & 5 & 7 0 3 5 3 6 3 4 1 2 & $1/4$ &&
8.1 & 3 & 5 & 7 2 7 0 1 4 4 5 5 6 & $3/4$ \\
2.2 & 3 & 5 & 7 1 3 5 3 6 3 4 2 0 & $1/4$ &&
8.2 & 3 & 5 & 7 0 7 1 2 4 4 5 5 6 & $3/4$ \\
2.3 & 3 & 5 & 7 2 3 5 3 6 3 4 0 1 & $1/4$ &&
8.3 & 3 & 5 & 7 1 7 2 0 4 4 5 5 6 & $3/4$ \\
& & & & &&
& & & & \\
3.1 & 3 & 5 & 5 2 0 1 4 6 4 7 4 5 & $3/4$ &&
9.1 & 3 & 5 & 4 2 7 1 0 4 4 5 5 6 & $-3/4$ \\
3.2 & 3 & 5 & 5 0 1 2 4 6 4 7 4 5 & $3/4$ &&
9.2 & 3 & 5 & 4 0 7 2 1 4 4 5 5 6 & $-3/4$ \\
3.3 & 3 & 5 & 5 1 2 0 4 6 4 7 4 5 & $3/4$ &&
9.3 & 3 & 5 & 4 1 7 0 2 4 4 5 5 6 & $-3/4$ \\
& & & & &&
& & & & \\
4.1 & 3 & 5 & 6 7 0 3 3 4 4 5 1 2 & $3/4$ &&
10.1 & 3 & 5 & 5 2 7 1 0 4 4 5 5 6 & $-3/4$ \\
4.2 & 3 & 5 & 6 7 1 3 3 4 4 5 2 0 & $3/4$ &&
10.2 & 3 & 5 & 5 0 7 2 1 4 4 5 5 6 & $-3/4$ \\
4.3 & 3 & 5 & 6 7 2 3 3 4 4 5 0 1 & $3/4$ &&
10.3 & 3 & 5 & 5 1 7 0 2 4 4 5 5 6 & $-3/4$ \\
& & & & &&
& & & & \\
5.1 & 3 & 5 & 4 2 7 0 1 4 4 5 5 6 & $3/4$ &&
11.1 & 3 & 5 & 6 2 7 1 0 4 4 5 5 6 & $-3/4$ \\
5.2 & 3 & 5 & 4 0 7 1 2 4 4 5 5 6 & $3/4$ &&
11.2 & 3 & 5 & 6 0 7 2 1 4 4 5 5 6 & $-3/4$ \\
5.3 & 3 & 5 & 4 1 7 2 0 4 4 5 5 6 & $3/4$ &&
11.3 & 3 & 5 & 6 1 7 0 2 4 4 5 5 6 & $-3/4$ \\
& & & & &&
& & & & \\
6.1 & 3 & 5 & 5 2 7 0 1 4 4 5 5 6 &  $3/4$ &&
12.1 & 3 & 5 & 7 2 7 1 0 4 4 5 5 6 &  $-3/4$ \\
6.2 & 3 & 5 & 5 0 7 1 2 4 4 5 5 6 &   $3/4$ &&
12.2 & 3 & 5 & 7 0 7 2 1 4 4 5 5 6 &   $-3/4$ \\
6.3 & 3 & 5 & 5 1 7 2 0 4 4 5 5 6 &   $3/4$ &&
12.3 & 3 & 5 & 7 1 7 0 2 4 4 5 5 6 &   $-3/4$ \\
& & & & &&
& & & & \\
& & & & &&
13.1 & 3 & 5 & 6 0 7 3 3 4 4 5 1 2  & $-3/4$ \\
& & & & &&
13.2 & 3 & 5 & 6 1 7 3 3 4 4 5 2 0 &   $-3/4$ \\
& & & & &&
13.3 & 3 & 5 & 6 2 7 3 3 4 4 5 0 1 &   $-3/4$ \\
\end{tabular}
\end{table}
The graphs are collected into groups of three, consisting of the skew\/-\/symmetrization -- by a sum over cyclic permutations -- of a single graph.
Within the encodings in the groups of three, the lists of targets only differ by a cyclic permutation of the target vertices $0,1,2$.

\begin{table}[htb]
\caption{Machine\/-\/readable encoding of Fig.~\ref{Fig9Skew} on p.~\pageref{Fig9Skew}.}\label{Tab9Skew}
\vskip 1em
\begin{tabular}{l l r r}
3&5&0 1 2 3 4 6 4 7 4 5&$-1/2$\\
3&5&0 4 1 2 4 6 4 7 4 5&$-1/2$\\
3&5&0 4 5 6 1 2 5 7 4 5&$3/2$\\
3&5&0 1 2 5 6 7 3 4 4 6&$3/2$\\
3&5&0 4 5 6 1 2 3 7 3 4&$3/2$\\
3&5&0 4 5 6 1 6 2 7 4 5&$-3$\\
3&5&0 4 5 6 1 7 5 7 2 4&$3$\\
3&5&0 4 1 5 2 6 4 7 4 5&$3$\\
3&5&0 4 2 5 6 7 1 4 4 6&$-3$
\end{tabular}
\end{table}

Consisting of $8$~skew\/-\/symmetric terms,
the solution (see~
\eqref{EqSol} on p.~\pageref{pSolution}) 
is encoded in Table~\ref{TableSol}: the sought\/-\/for values of coefficients are written after the encoding of the respective~27 Leibniz graphs.
\label{pTableSol}
\begin{table}[htb]
\caption{Machine\/-\/readable encoding of solution~\eqref{EqSol} on p.~\pageref{pSolution}.}\label{TableSol}
\vskip 1em
\begin{tabular}{r l l l r p{1cm} r l l l r }
1.1 & 3 & 5 &  4 6 5 6 3 6 0 1 6 2  & $-1$ &&
6.1 & 3 & 5 &  1 2 3 5 3 6 0 3 6 4  & $3$ \\
& & & & &&
6.2 & 3 & 5 &  0 2 3 5 3 6 1 3 6 4  & $-3$ \\
2.1 & 3 & 5 &  0 4 1 5 2 3 3 4 6 5  & $-3$ &&
6.3 & 3 & 5 &  4 6 0 1 3 4 2 4 6 5  & $-3$ \\
2.2 & 3 & 5 &  0 4 2 5 1 3 3 4 6 5  & $3$ &&
& & & & \\
& & & & &&
7.1 & 3 & 5 &  1 5 3 5 2 6 0 3 6 4  & $-3$ \\
3.1 & 3 & 5 &  0 4 1 2 3 4 3 4 6 5  & $-3$ &&
7.2 & 3 & 5 &  1 5 3 5 0 6 2 3 6 4  & $3$ \\
3.2 & 3 & 5 &  0 1 2 3 3 4 3 4 6 5  & $-3$ &&
7.3 & 3 & 5 &  0 5 3 5 2 6 1 3 6 4  & $3$ \\
3.3 & 3 & 5 &  0 2 1 3 3 4 3 4 6 5  & $3$ &&
7.4 & 3 & 5 &  2 5 3 5 1 6 0 3 6 4  & $3$ \\
& & & & &&
7.5 & 3 & 5 &  2 5 3 5 0 6 1 3 6 4  & $-3$ \\
4.1 & 3 & 5 &  4 5 1 6 4 6 0 2 6 3  & $-3$ &&
7.6 & 3 & 5 &  0 5 3 5 1 6 2 3 6 4  & $-3$ \\
4.2 & 3 & 5 &  4 5 0 6 4 6 1 2 6 3  & $3$ &&
& & & & \\
4.3 & 3 & 5 &  5 6 3 5 2 6 0 1 6 4  & $-3$ &&
8.1 & 3 & 5 &  1 4 2 5 3 6 0 3 6 4  & $-3$ \\
& & & & &&
8.2 & 3 & 5 &  1 5 2 3 4 6 0 3 6 4  & $-3$ \\
5.1 & 3 & 5 &  1 4 5 6 3 6 0 2 6 3  & $3$ &&
8.3 & 3 & 5 &  0 4 2 5 3 6 1 3 6 4  & $3$ \\
5.2 & 3 & 5 &  0 4 5 6 3 6 1 2 6 3  & $-3$ &&
8.4 & 3 & 5 &  0 5 2 3 4 6 1 3 6 4  & $3$ \\
5.3 & 3 & 5 &  5 6 2 3 4 6 0 1 6 4  & $-3$ &&
8.5 & 3 & 5 &  4 6 0 5 1 3 2 4 6 5  & $-3$ \\
& & & & &&
8.6 & 3 & 5 &  4 6 1 5 0 3 2 4 6 5  & $3$ 
\end{tabular}
\end{table}
Here the sums over permutations of the ground vertices are expanded (thus making the 
27~Leibniz graphs out of the 8~skew\/-\/symmetric groups). 
In every entry of Table~\ref{TableSol}, the sum of three graphs in Jacobiator~\eqref{EqJacFig} is represented by its first term.
For all the in\/-\/coming arrows, 
the vertex~6 is the placeholder for the Jacobiator (again, see~\eqref{EqJacFig} on p.~\pageref{EqJacFig}); in earnest, the Jacobiator contains the internal vertices~6 and~7.
This convention is helpful: 
for every set of derivations acting on the Jacobiator 
with 
internal vertices $6$ and $7$, 
only the first term is listed, 
namely the one where each edge lands on~$6$.

\begin{example}
The first entry of Table~\ref{TableSol} encodes a graph containing a three\/-\/cycle over internal vertices $3,4,5$. Issued from each of these three, the other edge lands on the vertex~$6$: the placeholder for the Jacobiator. This entry is the first term in~\eqref{EqSol} on p.~\pageref{pSolution}.
\end{example}

\begin{example}\label{Gamma0InSol}
The entry~3.1 is one of three terms produced by the third graph in 
solution~ \eqref{EqSol};
the Jacobiator in this entry is expanded using formula~\eqref{EqJacFig}, resulting in three terms (by definition).
It is easy to see that the first term 
contains picture~\eqref{EqWedgeOnTwoWedges} from Remark~\ref{RemGamma0} as a subgraph.
Hence the polydifferential operator encoded by this graph vanishes due to skew\/-\/symmetry.
However, the other two terms 
produced in the entry~3.1 by formula~\eqref{EqJacFig}
do not vanish by skew\/-\/symmetry. 
Likewise, there is one term vanishing by the same mechanism in the entry~3.2 and in~3.3.
\end{example}

The proof of Theorem \ref{ThMain} amounts to expanding the Leibniz rules on Jacobiators in Table~\ref{TableSol} according to the rules above (resulting in Table~\ref{Table201} on p.~\pageref{Table201}, where the prefix ``$3 \quad 5$'' of each graph has been omitted for brevity), simplifying by collecting terms, and seeing that one obtains Table~\ref{Tab9Skew}. 

\begin{table}[htbp]
\caption{Expansion of Leibniz rules on Jacobiators in Table~\ref{TableSol}.}%
\label{Table201}
\tiny\renewcommand{\baselinestretch}{0.9}\selectfont
\begin{tabular}{l l l r}
$\diamondsuit_{1}$& 0 1 2 3 3 6 3 7 3 5  & & $-1$ \\
$\clubsuit_{1}$ & 0 1 2 3 3 6 3 7 4 5  & & $-1$ \\
$\clubsuit_{1}$ & 0 1 2 3 3 6 3 7 4 5  & & $-1$ \\
$\spadesuit_{1}$ & 0 1 2 3 3 6 4 7 4 5  & & $-1$ \\
$\clubsuit_{1}$ & 0 1 2 3 3 6 3 7 4 5  & & $-1$ \\
$\spadesuit_{1}$ & 0 1 2 3 3 6 4 7 4 5  & & $-1$ \\
$\spadesuit_{1}$ & 0 1 2 3 3 6 4 7 4 5  & & $-1$ \\
$\heartsuit_{1}$& 0 1 2 3 4 6 4 7 4 5  & & $-1$ \\
$\diamondsuit_{2}$& 0 4 1 2 4 6 4 7 4 5  & & $-1$ \\
$\clubsuit_{2}$ & 0 4 1 2 3 6 4 7 4 5  & & $-1$ \\
$\clubsuit_{2}$ & 0 4 1 2 3 6 4 7 4 5  & & $-1$ \\
$\spadesuit_{2}$ & 0 4 1 2 3 6 3 7 4 5  & & $-1$ \\
$\clubsuit_{2}$ & 0 4 1 2 3 6 4 7 4 5  & & $-1$ \\
$\spadesuit_{2}$ & 0 4 1 2 3 6 3 7 4 5  & & $-1$ \\
$\spadesuit_{2}$ & 0 4 1 2 3 6 3 7 4 5  & & $-1$ \\
$\heartsuit_{2}$& 0 4 1 2 3 6 3 7 3 5  & & $-1$ \\
$\diamondsuit_{3}$& 0 2 1 3 3 6 3 7 3 5  & & $1$ \\
$\clubsuit_{3}$ & 0 2 1 3 3 6 3 7 4 5  & & $1$ \\
$\clubsuit_{3}$ & 0 2 1 3 3 6 3 7 4 5  & & $1$ \\
$\spadesuit_{3}$ & 0 2 1 3 3 6 4 7 4 5  & & $1$ \\
$\clubsuit_{3}$ & 0 2 1 3 3 6 3 7 4 5  & & $1$ \\
$\spadesuit_{3}$ & 0 2 1 3 3 6 4 7 4 5  & & $1$ \\
$\spadesuit_{3}$ & 0 2 1 3 3 6 4 7 4 5  & & $1$ \\
$\heartsuit_{3}$& 0 2 1 3 4 6 4 7 4 5  & & $1$ \\
& 0 1 2 5 3 6 3 4 3 4  & & $-3$ \\
& 0 1 2 5 3 6 4 7 3 4  & & $3$ \\
& 0 1 2 5 6 7 3 4 3 4  & & $-3$ \\
& 0 1 2 5 6 7 3 4 4 6  & & $3$ \\
& 0 4 1 5 2 6 4 7 4 5  & & $3$ \\
& 0 4 1 5 2 6 4 7 3 5  & & $3$ \\
& 0 4 1 5 2 6 3 7 4 5  & & $3$ \\
& 0 4 1 5 2 6 3 7 3 5  & & $3$ \\
& 0 4 2 5 3 6 3 4 1 3  & & $-3$ \\
& 0 4 2 5 3 6 4 7 1 3  & & $3$ \\
& 0 4 2 5 6 7 1 3 3 4  & & $-3$ \\
& 0 4 2 5 6 7 1 3 4 6  & & $3$ \\
& 0 1 2 3 3 4 3 7 4 5  & & $-3$ \\
& 0 1 2 5 3 6 4 7 3 4  & & $-3$ \\
$\spadesuit_{1}$ & 0 1 2 3 3 6 4 7 4 5  & & $3$ \\
& 0 1 2 5 3 6 4 7 4 5  & & $3$ \\
& 0 4 1 5 6 7 2 4 4 6  & & $3$ \\
& 0 4 1 5 6 7 2 3 4 6  & & $3$ \\
& 0 4 1 5 6 7 2 4 3 6  & & $3$ \\
& 0 4 1 5 6 7 2 3 3 6  & & $3$ \\
& 0 4 5 6 2 3 3 5 1 3  & & $-3$ \\
& 0 4 5 6 2 7 3 5 1 3  & & $-3$ \\
& 0 4 5 6 2 3 5 7 1 3  & & $3$ \\
& 0 4 5 6 2 7 5 7 1 3  & & $3$ \\
& 0 2 1 5 3 6 3 4 3 4  & & $3$ \\
& 0 2 1 5 3 6 4 7 3 4  & & $-3$ \\
& 0 2 1 5 6 7 3 4 3 4  & & $3$ \\
& 0 2 1 5 6 7 3 4 4 6  & & $-3$ \\
& 0 4 2 5 1 6 4 7 4 5  & & $-3$ \\
& 0 4 2 5 1 6 4 7 3 5  & & $-3$ \\
& 0 4 2 5 1 6 3 7 4 5  & & $-3$ \\
& 0 4 2 5 1 6 3 7 3 5  & & $-3$ \\
& 0 4 1 5 3 6 3 4 2 3  & & $3$ \\
& 0 4 1 5 3 6 4 7 2 3  & & $-3$ \\
& 0 4 1 5 6 7 2 3 3 4  & & $3$ \\
& 0 4 1 5 6 7 2 3 4 6  & & $-3$ \\
& 0 2 1 3 3 4 3 7 4 5  & & $3$ \\
$\spadesuit_{3}$ & 0 2 1 3 3 6 4 7 4 5  & & $-3$ \\
& 0 2 1 5 3 6 4 7 3 4  & & $3$ \\
& 0 2 1 5 3 6 4 7 4 5  & & $-3$ \\
& 0 4 2 5 6 7 1 4 4 6  & & $-3$ \\
& 0 4 2 5 6 7 1 4 3 6  & & $-3$ \\
& 0 4 2 5 6 7 1 3 4 6  & & $-3$
\end{tabular}
\hskip 2.5em
\begin{tabular}{l l l r}
& 0 4 2 5 6 7 1 3 3 6  & & $-3$ \\
& 0 4 5 6 1 3 3 5 2 3  & & $3$ \\
& 0 4 5 6 1 3 5 7 2 3  & & $-3$ \\
& 0 4 5 6 1 7 3 5 2 3  & & $3$ \\
& 0 4 5 6 1 7 5 7 2 3  & & $-3$ \\
& 0 4 1 2 3 4 3 7 4 5  & & $-3$ \\
$\clubsuit_{2}$ & 0 4 1 2 3 6 4 7 4 5  & & $3$ \\
& 0 4 5 6 1 2 5 7 4 5  & & $3$ \\
& 0 4 5 6 1 2 5 7 3 5  & & $3$ \\
& 0 4 5 6 1 2 3 5 3 5  & & $3$ \\
& 0 4 5 6 1 2 5 7 3 5  & & $-3$ \\
& 0 4 1 5 2 6 3 4 3 5  & & $3$ \\
& 0 4 1 5 2 6 4 7 3 5  & & $-3$ \\
& 0 4 5 6 1 6 2 7 4 5  & & $-3$ \\
& 0 4 5 6 1 6 2 7 3 5  & & $-3$ \\
& 0 4 2 5 3 6 1 4 3 6  & & $-3$ \\
& 0 4 2 5 6 7 1 4 3 6  & & $3$ \\
& 0 4 1 5 3 6 2 4 3 6  & & $3$ \\
& 0 4 1 5 6 7 2 4 3 6  & & $-3$ \\
& 0 4 5 6 1 7 2 5 4 6  & & $-3$ \\
& 0 4 5 6 1 7 2 5 3 6  & & $-3$ \\
& 0 4 2 5 1 6 3 4 3 5  & & $-3$ \\
& 0 4 2 5 1 6 4 7 3 5  & & $3$ \\
& 0 4 1 5 2 3 3 7 4 5  & & $-3$ \\
& 0 4 1 5 2 6 3 7 4 5  & & $-3$ \\
& 0 4 5 6 1 7 5 7 2 4  & & $3$ \\
& 0 4 5 6 1 7 5 7 2 3  & & $3$ \\
& 0 4 5 6 1 6 2 3 3 5  & & $3$ \\
& 0 4 5 6 1 6 2 7 3 5  & & $3$ \\
& 0 4 2 5 1 3 3 7 4 5  & & $3$ \\
& 0 4 2 5 1 6 3 7 4 5  & & $3$ \\
& 0 4 5 6 2 7 5 7 1 4  & & $-3$ \\
& 0 4 5 6 2 7 5 7 1 3  & & $-3$ \\
& 0 4 5 6 1 3 2 5 3 6  & & $3$ \\
& 0 4 5 6 1 7 2 5 3 6  & & $3$ \\
& 0 4 5 6 1 2 3 5 3 5  & & $-3$ \\
& 0 4 5 6 1 2 3 7 3 5  & & $-3$ \\
& 0 4 5 6 3 7 3 7 1 2  & & $-3$ \\
& 0 4 5 6 3 6 3 7 1 2  & & $3$ \\
& 0 4 5 6 2 3 3 5 1 5  & & $-3$ \\
& 0 4 5 6 2 3 3 7 1 5  & & $-3$ \\
& 0 4 5 6 1 7 3 7 2 3  & & $-3$ \\
& 0 4 5 6 1 7 3 5 2 3  & & $-3$ \\
& 0 4 5 6 1 3 3 5 2 5  & & $3$ \\
& 0 4 5 6 1 3 3 7 2 5  & & $3$ \\
& 0 4 5 6 2 7 3 7 1 3  & & $3$ \\
& 0 4 5 6 2 7 3 5 1 3  & & $3$ \\
& 0 4 1 2 3 4 3 5 4 6  & & $3$ \\
$\spadesuit_{2}$ & 0 4 1 2 3 6 3 7 4 5  & & $3$ \\
& 0 4 5 6 1 2 3 7 3 5  & & $3$ \\
& 0 4 5 6 1 2 3 7 3 4  & & $3$ \\
& 0 4 2 5 3 6 3 4 1 4  & & $-3$ \\
& 0 4 2 5 3 6 3 7 1 4  & & $-3$ \\
& 0 4 1 5 2 6 3 7 3 5  & & $-3$ \\
& 0 4 1 5 2 6 3 7 3 4  & & $-3$ \\
& 0 4 1 5 3 6 3 4 2 4  & & $3$ \\
& 0 4 1 5 3 6 3 7 2 4  & & $3$ \\
& 0 4 2 5 1 6 3 7 3 5  & & $3$ \\
& 0 4 2 5 1 6 3 7 3 4  & & $3$ \\
& 0 2 1 3 3 4 3 5 4 6  & & $-3$ \\
$\clubsuit_{3}$ & 0 2 1 3 3 6 3 7 4 5  & & $-3$ \\
& 0 2 1 5 3 6 3 7 3 5  & & $3$ \\
& 0 2 1 5 3 6 3 7 3 4  & & $3$ \\
& 0 2 1 5 3 6 3 4 3 4  & & $-3$ \\
& 0 2 1 5 3 6 3 7 3 4  & & $-3$ \\
& 0 4 2 5 3 6 1 3 4 6  & & $-3$ \\
& 0 4 2 5 3 6 4 7 1 3  & & $-3$
\end{tabular}
\hskip 2.5em
\begin{tabular}{l l l r}
& 0 4 2 5 3 6 3 4 1 6  & & $-3$ \\
& 0 4 2 5 3 6 1 7 3 4  & & $-3$ \\
& 0 4 2 5 3 6 1 4 3 6  & & $3$ \\
& 0 4 2 5 3 6 3 7 1 4  & & $3$ \\
& 0 4 5 6 1 3 2 3 5 6  & & $-3$ \\
& 0 4 5 6 2 3 5 7 1 3  & & $-3$ \\
& 0 4 5 6 2 3 3 5 1 6  & & $-3$ \\
& 0 4 5 6 1 7 2 3 3 6  & & $3$ \\
& 0 4 5 6 1 6 2 3 3 5  & & $-3$ \\
& 0 4 5 6 2 3 3 7 1 5  & & $3$ \\
& 0 4 2 5 1 3 3 4 5 6  & & $3$ \\
& 0 4 2 5 6 7 1 3 3 4  & & $3$ \\
& 0 4 2 5 3 6 3 4 1 5  & & $-3$ \\
& 0 4 2 5 1 6 3 7 3 4  & & $-3$ \\
& 0 4 2 5 1 6 3 4 3 5  & & $3$ \\
& 0 4 2 5 3 6 1 7 3 4  & & $3$ \\
& 0 4 1 5 2 3 3 5 4 6  & & $3$ \\
& 0 4 5 6 1 7 3 7 2 3  & & $3$ \\
& 0 4 5 6 2 3 3 5 1 4  & & $-3$ \\
& 0 4 1 5 6 7 2 3 3 6  & & $-3$ \\
& 0 4 1 5 3 6 2 3 4 6  & & $-3$ \\
& 0 4 5 6 1 7 2 3 3 6  & & $-3$ \\
& 0 4 1 5 2 3 3 4 5 6  & & $-3$ \\
& 0 4 1 5 6 7 2 3 3 4  & & $-3$ \\
& 0 4 1 5 3 6 3 4 2 5  & & $3$ \\
& 0 4 1 5 2 6 3 7 3 4  & & $3$ \\
& 0 4 1 5 2 6 3 4 3 5  & & $-3$ \\
& 0 4 1 5 3 6 2 7 3 4  & & $-3$ \\
& 0 4 2 5 1 3 3 5 4 6  & & $-3$ \\
& 0 4 5 6 2 7 3 7 1 3  & & $-3$ \\
& 0 4 5 6 1 3 3 5 2 4  & & $3$ \\
& 0 4 2 5 6 7 1 3 3 6  & & $3$ \\
& 0 4 2 5 3 6 1 3 4 6  & & $3$ \\
& 0 4 5 6 1 3 2 7 3 5  & & $-3$ \\
& 0 1 2 3 3 4 3 5 4 6  & & $3$ \\
$\clubsuit_{1}$ & 0 1 2 3 3 6 3 7 4 5  & & $3$ \\
& 0 1 2 5 3 6 3 7 3 5  & & $-3$ \\
& 0 1 2 5 3 6 3 7 3 4  & & $-3$ \\
& 0 1 2 5 3 6 3 4 3 4  & & $3$ \\
& 0 1 2 5 3 6 3 7 3 4  & & $3$ \\
& 0 4 1 5 3 6 2 3 4 6  & & $3$ \\
& 0 4 1 5 3 6 4 7 2 3  & & $3$ \\
& 0 4 1 5 3 6 3 4 2 6  & & $3$ \\
& 0 4 1 5 3 6 2 7 3 4  & & $3$ \\
& 0 4 1 5 3 6 2 4 3 6  & & $-3$ \\
& 0 4 1 5 3 6 3 7 2 4  & & $-3$ \\
& 0 4 5 6 1 3 2 5 3 6  & & $-3$ \\
& 0 4 5 6 1 3 3 7 2 5  & & $-3$ \\
& 0 4 5 6 1 3 3 5 2 6  & & $3$ \\
& 0 4 5 6 1 3 2 7 3 5  & & $3$ \\
& 0 4 5 6 1 3 2 3 5 6  & & $3$ \\
& 0 4 5 6 1 3 5 7 2 3  & & $3$ \\
& 0 1 2 3 3 4 3 4 5 6  & & $-3$ \\
& 0 1 2 3 3 4 3 7 4 5  & & $3$ \\
& 0 1 2 3 3 4 3 5 4 6  & & $-3$ \\
& 0 2 1 3 3 4 3 4 5 6  & & $3$ \\
& 0 2 1 3 3 4 3 7 4 5  & & $-3$ \\
& 0 2 1 3 3 4 3 5 4 6  & & $3$ \\
& 0 4 1 2 3 4 3 4 5 6  & & $-3$ \\
& 0 4 1 2 3 4 3 7 4 5  & & $3$ \\
& 0 4 1 2 3 4 3 5 4 6  & & $-3$ \\
& 0 4 1 5 2 3 3 4 5 6  & & $3$ \\
& 0 4 1 5 2 3 3 7 4 5  & & $3$ \\
& 0 4 1 5 2 3 3 5 4 6  & & $-3$ \\
& 0 4 2 5 1 3 3 4 5 6  & & $-3$ \\
& 0 4 2 5 1 3 3 7 4 5  & & $-3$ \\
& 0 4 2 5 1 3 3 5 4 6  & & $3$
\end{tabular}
\end{table}

\normalsize\renewcommand{\baselinestretch}{1.00}

\section{Perturbation method}\label{AppPerturb}
\noindent%
In section \ref{SecProof} above, the run-through method gave all the terms at once in the operator~$\Diamond$ that establishes 
the factorization $\schouten{\cP,\cQ_{1:6}}=\Diamond(\cP,\Jac(\cP))$. At the same time, there is another method to find $\Diamond$; the operator $\Diamond$ is then constructed gradually, term after term in~ \eqref{EqSol}, by starting with a zero initial approximation for~$\Diamond$. This is the perturbation scheme which we now outline.
(In fact, the perturbation method was tried first, revealing the typical graph patterns and their topological complexity.) 

The difficulty 
is that because the condition $\schouten{\cP,\cQ_{1:6}}=0$ and the Jacobi identity $\schouten{\cP,\cP}=0$ are valid, it is impossible to factorize one through the other; 
both are invisible.
So, we first make both expressions visible by perturbing the Poisson bi-vector 
$\cP \mapsto \cP_{\epsilon}=\cP+\epsilon\Delta$ in such a way that 
the tri\/-\/vector $\schouten{\cP_\epsilon,\cQ_{1:6}(\cP_{\epsilon})}$ and the Jacobiator $\schouten{\cP_{\epsilon}, \cP_{\epsilon}}$ stop vanishing identically:
\[
\schouten{\cP_\epsilon,\cQ_{1:6}(\cP_{\epsilon})} \neq 0 \quad\text{ and }\quad \schouten{\cP_{\epsilon}, \cP_{\epsilon}} \neq 0.
\]
To begin with, put $\Diamond\mathrel{{:}{=}}0$. Now consider 
a class of Poisson brackets on $\mathbb{R}^{3}$ (cf.~\cite{Perelomov}) by using the pre-factor $f(x,y,z)$ and arbitrary function $g(x,y,z)$ in the formula 
\[
\{u,v\}_{\cP}=f \cdot \det \left(\frac{\partial (g,u,v)}{\partial (x,y,z)} \right);
\]
it is helpful to start with some very degenerate dependencies of~$f$ and~$g$ of their arguments (see~\cite{tetra16} and~\cite{Van}).
The next step is to perturb the coefficients of the Poisson bracket $\{\cdot,\cdot\}_{\cP}$ at hand; in a similar way, one starts with degenerate dependency of the perturbation~$\Delta$. The idea is to take perturbations which destroy the validity of Jacobi identity for $\cP_{\epsilon}$ in the linear approximation in the deformation parameter~$\epsilon$. It is readily seen that the expansion of~\eqref{EqFactor} in~$\epsilon$ yields the equality
\[
\schouten{\cP_\epsilon,\cQ_{1:6}}(\epsilon) = (\Diamond
+\bar{o}(1))\,
(\schouten{\cP_{\epsilon},\cP_{\epsilon}}) = 2\epsilon\cdot (\Diamond 
+ \bar{o}(1))\, 
(\schouten{\cP
,\Delta
}
) +
(\Diamond + \bar{o}(1))\,
(\schouten{\cP,\cP}
) + \bar{o}(\epsilon). 
\]
Knowing the left\/-\/hand side at first order in~$\epsilon$ and taking into account that $\lshad\cP,\cP\rshad\equiv0$ for the Poisson bi\/-\/vector~$\cP$ which we perturb by~$\Delta$, we reconstruct the operator~$\Diamond$ that now acts on the known tri\/-\/vector~$2\lshad\cP,\Delta\rshad$.
In this sense, the Jacobiator~$\lshad\cP,\cP\rshad$ shows up through the term~$\lshad\cP,\Delta\rshad$.

For each pair $(\cP,\Delta)$, the above balance at~$\epsilon^1$ 
contains sums over indexes that mark the derivatives falling on the Jacobiator.
By taking those formulae, 
we guess the candidates for graphs that form the next, yet unknown, part of the operator $\Diamond$. Specifically, we inspect which differential operator(s), acting on the Jacobi identity, become visible and we list the graphs that provide such differential operators via the Leibniz rule(s). 
For a while we keep every such 
candidate with an undetermined coefficient.
By repeating the iteration, now for a different 
Poisson bi-vector~$\cP$ or its new, less degenerate perturbation~$\Delta$, we obtain linear constraints for the already introduced undetermined coefficients. Simultaneously, we continue listing the new candidates and introducing new coefficients for~them.

\begin{rem}
By translating formulae into 
graphs, we convert the dimension\/-\/dependent expressions into the dimension\/-\/independent operators which are encoded by the graphs. An obvious drawback of the method which is outlined here is that, presumably, some parts of the operator $\Diamond$ could always stay invisible for all Poisson structures over~$\mathbb{R}^{3}$ 
if they show up only in the higher dimensions. Secondly, the number of variants to consider and in practice, the number of irrelevant terms, each having its own undetermined coefficient, grows exponentially at the initial stage of the reasoning.
\end{rem}

By following the loops of iterations of this algorithm, we managed to find two non-zero coefficients and five zero coefficients in solution~\eqref{EqSol}. Namely, we identified the coefficient $\pm 1$ for the tripod, which is the first term in~\eqref{EqSol}, and we also recognized the coefficient $\pm 3$ of the sum of `elephant' graphs, which is the second to last term in~\eqref{EqSol}.

\begin{rem}
Because of the known skew-symmetry of the tri-vector $\schouten{\cP,\cQ_{1:6}}$ with respect to its arguments $f,g,h$, finding one term in a sum within formula~\eqref{EqSol} for~$\Diamond$ means that the entire such sum is 
reconstructed. Indeed, one then takes the sum over a subgroup of $S_{3}$ acting on $f,g,h$, depending on the actual skew-symmetry of the term which has been found. 

For instance, the first term in \eqref{EqSol}, itself making a sum running over $\{\text{id}\}\prec S_{3}$, is obviously totally antisymmetric with respect to its arguments. 
The other graph which we found by using the perturbation method 
(see the last graph in the second line of formula~\eqref{EqSol} on p.~\pageref{EqSol})
is skew\/-\/symmetric with respect to its second and third arguments 
but it is not yet totally skew\/-\/symmetric with respect to the full set of its arguments. This shows that is suffices to take the sum over the group ${\circlearrowright}=A_{3} 
\prec S_{3}$ of cyclic permutations of~$f,g,h$, thus reconstructing the sixth term in solution~\eqref{EqSol}. 
\end{rem}

\subsubsection*{Acknowledgements}
A.\,K. thanks M.\,Kon\-tse\-vich for posing the problem;
the authors are grateful to P.\,Vanhaecke and A.\,G.\,Sergeev for stimulating discussions.
The authors are profoundly grateful to the referee for constructive criticism and advice.
	
This research 
was supported in part by 
JBI~RUG project~106552 (Groningen
) and by the $\smash{\text{IH\'ES}}$ and MPIM (Bonn), to which A.\,K.\ is grateful for warm hospitality. 
A.\,B.\ and R.\,B.\ thank the organizers of the $8^{\rm th}$ international workshop GADEIS\- VIII\- on Group Analysis of Differential Equations and Integrable Systems (12--16 June 2016, Larnaca, Cyprus) for partial financial support and warm 
hospitality.
A.\,B.\ and R.\,B.\ are also grateful to the Graduate School of Science (Faculty of Mathematics and Natural Sciences, University of Groningen) for financial support.
We thank the Center for Information Technology of the University of Groningen 
for providing access to the \textsf{Peregrine} high performance computing cluster.

\normalsize
\newpage
\section{The condition $a:b=1:6$ is necessary (and maybe sufficient\,?)}
\label{ProofProp1}
\begin{proposition}[\cite{tetra16}]\label{Prop1}
The tetrahedral flow $\dot{P}=\cQ_{a:b}(\cP)$ preserves the property of $\cP+\varepsilon \cQ_{a:b}(\cP)+ \bar{o}(\varepsilon)$ to be (at least infinitesimally)
Poisson for all Poisson bi\/-\/vectors~$\cP$ on all 
affine real manifolds~$N^{n}$ \emph{only~if} the ratio is~$a:b=1:6$.
\end{proposition}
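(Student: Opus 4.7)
The strategy is to exhibit explicit Poisson bi\/-\/vectors~$\cP$ on concrete affine manifolds for which the vanishing of the Schouten bracket $\schouten{\cP,\cQ_{a:b}(\cP)}$ forces $a:b=1:6$, thus ruling out every other balance. Since the two graphs $\Gamma_1$ and~$\Gamma_2$ encode differential polynomials of degree four in~$\cP$, involving up to third derivatives of its coefficients, the test structures must have coefficients of sufficiently high polynomial degree for these two tri\/-\/vectors to be linearly independent and nonzero; linear coefficients annihilate both. The plan is therefore to work on~$N^n=\BBR^3$ with Jacobian\/-\/type brackets $\{u,v\}_{\cP}=f(\bx)\cdot\det\bigl(\dd(g,u,v)/\dd(x,y,z)\bigr)$, for which the Jacobi identity holds automatically by construction, and to select $f$ and~$g$ polynomial of increasing degree until the outcome discriminates the ratio.

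First, I would compute the two differential monomials $\Gamma_1(\cP)$ and~$\Gamma_2(\cP)$ using formulas~\eqref{EqFlow1}--\eqref{EqFlow2} for a parametric family of Jacobian brackets, e.g.\ starting from $f\equiv1$ and a homogeneous cubic or quartic Casimir~$g$, then widening to nontrivial~$f$ if needed. Next, I would evaluate the tri\/-\/vector $\schouten{\cP,a\cdot\Gamma_1(\cP)+b\cdot\Gamma_2(\cP)}$ on a generic triple of test functions $(\mathit{1},\mathit{2},\mathit{3})=(x,y,z)$ or on monomial triples. By Lemma~\ref{Lemma}, this tri\/-\/vector vanishes identically iff each of its homogeneous components vanishes, so the equation $\schouten{\cP,\cQ_{a:b}(\cP)}=0$ reduces to a finite system of polynomial identities linear in~$a$ and~$b$. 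Extracting one nontrivial coefficient that is linearly independent as a function of~$(a,b)$ already constrains $a:b$ to a single ratio.

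The decisive step is to pick a Poisson structure rich enough that (i) at least one coefficient of $\schouten{\cP,\Gamma_1(\cP)}$ and one coefficient of $\schouten{\cP,\Gamma_2(\cP)}$ are separately nonzero, and (ii) the two resulting scalar coefficients are not proportional in a trivial way that would leave the ratio undetermined. A natural candidate is the class of quadratic\/-\/in\/-\/coordinates Nambu\/--\/Poisson brackets on~$\BBR^3$ used in~\cite{tetra16}: taking for instance $\cP$ with $g$ a generic polynomial of degree three or four and $f$ a polynomial of moderate degree, the expansion of $\cQ_{a:b}(\cP)$ produces sufficiently many independent monomials to read off a single relation of the form $\alpha a+\beta b=0$ with $\alpha,\beta\neq0$. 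Solving that relation pins down $a:b=1:6$ uniquely (up to rescaling). Once one such example is produced, the conclusion ``only if'' is immediate, and by quoting~\cite{tetra16} or by displaying a second independent example one confirms that the same ratio is forced across different Poisson models.

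The main obstacle is computational size: the tri\/-\/vectors in question involve third derivatives of~$\cP$ composed with the Poisson differential, so the intermediate expressions explode combinatorially, and a hand computation needs a careful choice of~$f$ and~$g$ to remain tractable. I would therefore rely on the graph\/-\/calculus software of~\cite{cpcOrder4} to automate the expansion of~$\Gamma_1(\cP)$, $\Gamma_2(\cP)$ and their Schouten brackets with~$\cP$, and use the symbolic output only to isolate a single coefficient whose vanishing yields the equation $a-6b=0$ (up to overall normalization). The risk to be managed is that overly degenerate choices of $f,g$ can make both $\schouten{\cP,\Gamma_1(\cP)}$ and $\schouten{\cP,\Gamma_2(\cP)}$ vanish simultaneously --- leaving no information --- so the selection of the test structure must be calibrated by trial, starting from the family highlighted in~\cite{tetra16}.
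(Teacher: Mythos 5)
Your proposal follows essentially the same route as the paper: a single explicit counterexample drawn from the Jacobian class $\{u,v\}_{\cP}=f\cdot\det\bigl(\partial(g,u,v)/\partial(x,y,z)\bigr)$ on $\BBR^3$, with the ratio read off from a nonvanishing component of the two tri\/-\/vectors $\schouten{\cP,\Gamma_1(\cP)}$ and $\schouten{\cP,\Gamma_2(\cP)}$. The paper completes the calibration step you leave open by taking $f=x$ and $g=xyz+y$, which yields $\schouten{\cP,\Gamma_{1}(\cP)}^{123}=36x^{6}yz+48x^{5}y$ and $\schouten{\cP,\Gamma_{2}(\cP)}^{123}=-6x^{6}yz-8x^{5}y$, so that $a:b=1:6$ is forced.
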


\noindent%
Our proof amounts to producing at least one counterexample when any ratio other than $1:6$ violates equation~\eqref{EqWhichMechanism} for a given Poisson bi-vector $\cP$. 
\begin{proof}
Let $x,y,z$ be the Cartesian coordinates on $\mathbb{R}^{3}$. Consider the Poisson bracket $\{u,v\}_{\cP}=x\cdot \det \bigl( 
{\partial (xyz+y,u,v)}\bigr/{\partial (x,y,z)} \bigr)$ given by the Jacobian, so that the coefficient matrix is
\[\cP^{ij}=
\left( \begin{smallmatrix} 
0&x^{2}y&-x(xz+1) \\ \noalign{\medskip}
-x^{2}y&0&xyz\\ \noalign{\medskip}
-x(xz+1) &-xyz&0 
\end{smallmatrix} \right).
\] 
The coefficient matrices of both bi\/-\/vectors are
\[
\Gamma_{1}(\cP)=6 \cdot
\left( \begin{smallmatrix} 
0&-x^{5}y&-x^{4}(xz+1)\\ \noalign{\medskip}
x^{5}y&0&-x^{3}y\\ \noalign{\medskip}
x^{4}(xz+1)&x^{3}y&0
\end{smallmatrix} \right), \qquad 
\Gamma_{2}(\cP)=
\left( \begin{smallmatrix} 
0&x^{5}y&x^{4}(xz+2)\\ \noalign{\medskip}
-x^{5}y&0&-2x^{3}y\\  \noalign{\medskip}
-x^{4}(xz+2)&2x^{3}y&0
\end{smallmatrix} \right).
\]
It is readily seen that no non\/-\/trivial linear combination $a\cdot\Gamma_{1}(\cP)+b\cdot\Gamma_{2}(\cP)$ of the two flows vanishes everywhere on $\mathbb{R}^{3}\ni (x,y,z)$ for this example. Acting on the bi\/-\/vectors~$\Gamma_{1}$ and~$\Gamma_{2}$ by the Poisson differential $\schouten{\cP,\cdot}$, we obtain two 
tri\/-\/vectors which are completely determined by one component each. Namely, we have that
\[
\schouten{\cP,\Gamma_{1}(\cP)}^{123}=36x^{6}yz+48x^{5}y,\qquad
\schouten{\cP,\Gamma_{2}(\cP)}^{123}=-6x^{6}yz-8x^{5}y.
\]
Clearly, the balance $a:b=1:6$ is the only ratio at which the non-trivial linear combination $\cQ_{a:b}(\cP)=a \cdot \Gamma_{1}(\cP)+b\cdot\Gamma_{2}(\cP)$ solves the equation $\schouten{\cP,\cQ_{a:b}(\cP)}\equiv 0.$
\end{proof}

In fact, more is known~--- this time, about the sufficiency of the condition~$a:b=1:6$. 
%
First, let us recall from~\cite{Perelomov} that on $\mathbb{R}^{3}$ with coordinates~$x$,\ $y$,\ and~$z$ 
there is a class of Poisson brackets that admit first integrals at least locally:\footnote{%
The referee points out that not all the Poisson brackets are given by the Jacobian determinants. Indeed, the function~$g$ in~\eqref{EqPoisson3D} is always a Casimir of such bracket, but there are real Poisson structures on~$\BBR^3$ which do not have (smooth) Casimirs near all of its points: some point(s) can be singular so that in no neighbourhood of it would a Casimir exist. In fact, no exhaustive description is known for Poisson brackets on~$\BBR^3$.} 
\begin{equation}\label{EqPoisson3D}
\{u,v\}_{\cP}=f \cdot \det \left( \frac{\partial(g,u,v)}{\partial(x,y,z)} \right) \quad \text{for } u,v \in C^\infty(\mathbb{R}^3),
\end{equation}
where the free parameter $g$ is a function and the parameter $f$ is a density so that 
\[
f(x,y,z) \cdot \det \left( \frac{\partial(g,u,v)}{\partial(x,y,z)} \right) \mathrm{d}x\mathrm{d}y\mathrm{d}z = f(x,y,z)\raisebox{-10pt}[0pt][15pt]{$\Biggl{|}$}_{\substack{x=x(x',y',z')\\y=y(x',y',z')\\z=z(x',y',z')}} \cdot \det \left( \frac{\partial(g,u,v)}{\partial(x',y',z')} \right) \mathrm{d}x'\mathrm{d}y'\mathrm{d}z'.
\]
In any given coordinate system the parameter $f$ can be chosen freely; then it is recalculated as shown above.

\begin{proposition}[$\mathbb{R}^{3}$,$\{\cdot,\cdot\}_{\cP}$]\label{Prop3D}
The tetrahedral flow $\dot{\cP}=\cQ_{1:6}(\cP)$ does preserve the property of $\cP+\varepsilon \cQ_{a:b}(\cP)+ \bar{o}(\varepsilon)$ to be infinitesimally Poisson for all Poisson structures~\eqref{EqPoisson3D} on $\mathbb{R}^{3}$.
\end{proposition}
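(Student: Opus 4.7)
The plan is to deduce Proposition~\ref{Prop3D} directly from Corollary~\ref{CorMain}: the ambient manifold $\BBR^{3}$ is affine, and a bracket of the form~\eqref{EqPoisson3D} is, by construction, Poisson on it, so the universal result of Corollary~\ref{CorMain} applies verbatim and supplies infinitesimal sufficiency for every choice of $f$ and $g$. This is the proof I would state as the principal one.

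For a self-contained argument that does not invoke the full main theorem, the plan is to reduce the claim to a single polynomial identity. Fixing Cartesian coordinates $(x,y,z)$ on $\BBR^{3}$ and writing $\cP^{ij} = f\,\epsilon^{ijk}\,\partial_{k} g$, both $\cQ_{1:6}(\cP)$, computed from~\eqref{EqFlow1}--\eqref{EqFlow2}, and $\schouten{\cP,\cQ_{1:6}(\cP)}$, computed from~\eqref{EqSchouten}, are polynomial expressions in $f$, $g$, and finitely many of their partial derivatives. The tri-vector $\schouten{\cP,\cQ_{1:6}(\cP)}$ on $\BBR^{3}$ is determined by a single scalar component (the coefficient of $\partial_{x}\wedge\partial_{y}\wedge\partial_{z}$), so the claim reduces to showing that one particular polynomial expression in $\partial^{\alpha}f$ and $\partial^{\beta}g$ vanishes identically.

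I would establish this identity by one of two routes. Route~(i) is a direct computer-algebra verification: substitute the Jacobian ansatz, expand the bracket, collect terms, and confirm cancellation; this is the same style of calculation used in the counterexamples of the proof of Proposition~\ref{Prop1}, only now the result is an identity rather than a coefficient-determining equation. Route~(ii) is a graph-theoretic factorization through $\Jac(\cP)=0$, mirroring Theorem~\ref{ThMain} but simplified because $\cP$ has pointwise rank at most two: many of the Kontsevich graphs that feature in~\eqref{EqSol} collapse to zero upon substitution of the Jacobian ansatz (those whose index pattern would force a third independent direction beyond $\mathrm dg^{\perp}$), so the relevant factorizing operator is strictly smaller than the universal one~$\Diamond$.

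The main obstacle, if one foregoes the appeal to Corollary~\ref{CorMain}, is the combinatorial bulk: even in rank two and dimension three, the raw number of monomials in $\partial^{\alpha}f$ and $\partial^{\beta}g$ that must cancel is substantial, and one does not obtain an especially illuminating ``by hand'' proof without deploying the same Leibniz-graph technology as Section~\ref{SecProof}. This is presumably why Proposition~\ref{Prop3D} is placed in an appendix as a motivating special case of Corollary~\ref{CorMain} rather than as an independent stepping stone toward it.
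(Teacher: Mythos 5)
Your proposal matches the paper's treatment: the authors likewise note that Proposition~\ref{Prop3D} follows ``in hindsight'' from Theorem~\ref{ThMain} (hence Corollary~\ref{CorMain}), and they also supply exactly your Route~(i) --- a computer-algebra (Maple) verification that the single component $\schouten{\cP,\cQ_{1:6}(\cP)}^{123}$ vanishes identically for the general Jacobian ansatz~\eqref{EqPoisson3D}. Your Route~(ii) is extra speculation not pursued in the paper, but the two arguments you actually rely on are the paper's own.
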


We used Proposition~\ref{Prop3D} 
as an heuristic motivation to our 
main Theorem~\ref{ThMain} 
in which the claim from Proposition~\ref{Prop3D} is extended to \emph{all} Poisson structures on all finite-dimensional affine real manifolds. Therefore, in hindsight, Proposition~\ref{Prop3D} above has been proven rigorously as soon as Theorem~\ref{ThMain} was established. 
%

To verify the claim in Proposition~\ref{Prop3D} by direct calculation, it would take years for man still only a few seconds for a computer.\footnote{Running the script below took us approximately $5$~seconds.} A computer-assisted proof of Proposition~\ref{Prop3D} is realized through running the script in Maple (see below).
(All computations are done with the coefficient matrices of bi-vectors at hand. The bi-vectors are computed by using working formulas~\eqref{EqFlow1} and~\eqref{EqFlow2}.) For the balanced flow we have:
\begin{verbatim}
FlowQ := proc (P, y, a, b) 
description "Eval flow Q_a:b of q-dim bi-vector P."; 
local i, j, q, A, F, G, B, T, C; 
q := op(P)[1]; 
F := proc (i, j, k, l, m, n, p, r) options operator, arrow; 
a*(diff(P[i, j], y[k], y[l], y[m]))*(diff(P[k, n], y[p]))
*(diff(P[l, p], y[r]))*(diff(P[m, r], y[n])) end proc; 
G := proc (i, j, k, l, m, n, p, r) options operator, arrow; 
b*(diff(P[i, j], y[k], y[l]))*(diff(P[k, m], y[n], y[p]))
*(diff(P[n, l], y[r]))*(diff(P[r, p], y[j])) end proc; 
B := Array(1 .. q, 1 .. q); 
T := combinat:-cartprod([seq([seq(1 .. q)], i = 1 .. 8)]); 
while not T[finished] do 
C := op(T[nextvalue]()); 
B[C[1], C[2]] := B[C[1], C[2]]+F(C); 
B[C[1], C[5]] := B[C[1], C[5]]+G(C);
end do; 
A := Array(1 .. q, 1 .. q);
for i from 1 to q do 
for j from 1 to q do 
A[i, j] := simplify((1/2)*B[i, j]-(1/2)*B[j, i]);
end do;
end do; 
Matrix(A);
end proc:
\end{verbatim}
To implement the Schouten bracket of two bi-vectors $A$ and $B$, we use a component expansion (cf. \cite{Dubrovin}):
$$
\schouten{A,B}^{ijk}=\sum\nolimits_{s=1}^{n}A^{sk}B^{ij}_{s}+B^{sk}A^{ij}_{s}+A^{sj}B^{ki}_{s}+B^{sj}A^{ki}_{s}+A^{si}B^{jk}_{s}+B^{si}A^{jk}_{s},
$$
where superscripts and subscripts denote the bi-vector components and partial derivatives with respect to the coordinates $y^{s}$, respectively.
\begin{verbatim}
SchoutenBracket := proc (A, B, y) 
description "Evaluate the Schouten-bracket of A and B."; 
local T, t, F, n, res, cnt; 
n := op(A)[1]; 
F := proc (i, j, k) options operator, arrow; 
A[s, k]*(diff(B[i, j], y[s]))+B[s, k]*(diff(A[i, j], y[s]))+
A[s, j]*(diff(B[k, i], y[s]))+B[s, j]*(diff(A[k, i], y[s]))+
A[s, i]*(diff(B[j, k], y[s]))+B[s, i]*(diff(A[j, k], y[s])) end proc; 
T := combinat:-choose(n, 3); 
for t in T do 
print([[t[1], t[2], t[3]],simplify(add(F(t[1], t[2], t[3]), s = 1 .. n))]);
end do; 
end proc:
\end{verbatim}
Finally, the following script provides a computer-assisted proof of Proposision~\ref{Prop3D}.
\begin{verbatim}
# All 3-dimensional Poisson bi-vectors are of the following form.
> P:=<<0,-f(x,y,z)*(diff(g(x,y,z),z)),f(x,y,z)*(diff(g(x,y,z),y))>|
      <f(x,y,z)*(diff(g(x,y,z),z)),0,-f(x,y,z)*(diff(g(x,y,z),x))>|
      <-f(x,y,z)*(diff(g(x,y,z),y)),f(x,y,z)*(diff(g(x,y,z),x)),0>>:
# We evaluate the balanced flow Q_{1:6} on the above bi-vector.
> Q:=FlowQ(P,{x,y,z},1,6)
                [Length of output exceeds limit of 1000000]
# If so, let us inspect whether the flow Q_{1:6} vanishes.
> LinearAlgebra:-Equal(Q,Matrix(1..3,1..3,0))
                                  false
# Still, let us act on this Q_{1:6} by the Poisson differential.
> SchoutenBracket(P,Q,{x,y,z})
                               [[1,2,3], 0]
\end{verbatim}
This 
reasoning hints us that the condition $a:b=1:6$ could be sufficient for equation~\eqref{EqWhichMechanism} to hold for all Poisson structures on all finite dimensional affine real manifolds. A rigorous proof of the respective claim in Theorem~\ref{ThMain} is provided in section~\ref{SecProof}.

\section{The count of Leibniz graphs in Fig.~\protect\ref{FigSomeLeibniz}}\label{pCountLinear}
\label{AppCount}
\noindent%
We count all possible differential consequences of the Jacobi identity, that is, we consider the differential operators acting on the Jacobiator. We do this by constructing all possible graphs that encode 
trivector\/-\/valued differential consequences (see Lemma~\ref{Lemma} on p.~\pageref{Lemma}). The graphs that encode such differential consequences 
%
have $3$ ground vertices.
%
%
The Schouten bracket $\schouten{\cP,\cQ_{1:6}(\cP)}$ consists of graphs with $5$ internal vertices. Since two of these internal vertices are accounted for by 
the Jacobi identity, 
there remain $3$~spare 
internal vertices. 
%

\smallskip
First, let the Jacobiator stand, with all its three edges, on the $3$ ground vertices.
The only freedom that remains is how the $3$ free internal vertices act on each other and on the Jacobiator.
With its first edge, every free internal vertex can act on itself, on its $2$ neighbouring free vertices, or on the Jacobiator; there are $4$ possible targets. 
No second edge can meet the first edge at the same target (as this would yield no contribution due to the anti-symmetry, which is explained in Remark~\ref{RemGamma0}).
Hence there are only $3$ possible targets for this second edge. Finally, again due to anti-symmetry, every possibility is constructed exactly twice this way. Swapping the targets of the first and second edge only contributes to the sign of the graph. The total number of this type of differential consequence is therefore $\left( \frac{4 \cdot 3}{2} \right)^{3}=216$ graphs. This type of graph  is drawn first from the top-left in Figure~\ref{FigSomeLeibniz}.

Now let the Jacobiator stand on only $2$ of the ground vertices. The remaining edge of the Jacobiator has only $3$ possible targets, as the third edge cannot fall back onto the Jacobiator itself. One of the free internal vertices acts with an edge on the remaining ground vertex. The other edge has $4$ candidates as its target, namely the vertex itself, the neighbouring $2$ free internal vertices, and the Jacobiator. The $2$ internal vertices not falling on a ground vertex have each $\frac{4\cdot 3}{2}$ possible targets. The total number of graphs is therefore equal to $3 \cdot 4 \cdot \left( \frac{4\cdot 3}{2} \right)^{2}=432$. This type of graph is the second from the top-left in Figure~\ref{FigSomeLeibniz}.

Next, let the Jacobiator stand on only $1$ ground vertex. 
We distinguish between two cases: namely, the case where $1$ free internal vertex stands on both the remaining ground vertices and the case where two different internal vertices act 
by one edge each on the remaining two ground vertices. 
These are the third and fourth graphs from the top-left in Figure~\ref{FigSomeLeibniz}, respectively.\\[1pt]
$\bullet$\quad In the first case, the remaining $2$ internal vertices each have $\frac{4\cdot 3}{2}$ possible targets. The Jacobiator must act with its two remaining free edges on two different targets out of the $3$ available, yielding $3$ possibilities. 
The number of graphs in the first case is $3 \cdot \left( \frac{4\cdot 3}{2} \right) ^{2}=108$.\\ 
$\bullet$\quad For the second case, two internal vertices can each act on themselves, on the neighbouring $2$ internal vertices, or on the Jacobiator. With two of its edges,
the Jacobiator can act in $3$ different ways on the $3$ 
internal vertices. The third 
internal vertex has $\frac{4\cdot 3}{2}$ possible targets. This brings the total number of graphs for the second case to $4 \cdot 4 \cdot \frac{4\cdot 3}{2}\cdot 3=288$.

The last case to consider is where the Jacobiator does not act on any of the ground vertices. Again, since the outgoing edges of the Jacobiator must have different targets, 
it is clear that the Jacobiator acts in a unique way on all $3$ internal vertices.
We now distinguish two cases: namely, the case where $1$ free internal vertex stands on $2$ ground vertices, $1$ free internal vertex acts on $1$ ground vertex, and $1$ free internal vertex falling on no ground vertex, and the second case where each internal vertex acts with one edge on one ground vertex. These two cases are represented by the last $2$ graphs in Figure~\ref{FigSomeLeibniz}, respectively.\\[1pt]
$\bullet$\quad In the first case, there is a free internal vertex with one free edge, which has $4$ possible targets. The remaining free internal vertex with two free edges has $\frac{4\cdot 3}{2}$ possible targets. The total number of graphs for this case is $4 \cdot \frac{4\cdot 3}{2}=24$.\\ 
$\bullet$\quad In the second case, each 
internal vertex can act on itself, on its $2$ neighbouring internal vertices, and on the Jacobiator. This results in a total of $4^{3}=64$ graphs.

Summarizing, the total number of all trivector\/-\/valued Leibniz graphs, linear in the Jacobiator and containing five internal vertices
, is~$1132$.

\section{Properties of the found solution}\label{SecProperties}
\begin{rem}\label{RemOverDetermined}
Let us recall that equation~\eqref{EqWhichMechanism} yields the linear system of 7,025~inhomogeneous equations for the coefficients of 1132~patterns from Fig.~\ref{FigSomeLeibniz}. This shows that the algebraic system at hand is extremely overdetermined.
Moreover, out of those 1132~admissible totally antisymmetric graphs, solution~\eqref{EqSol} involves only 8~of them. In this sense, the factorising operator~$\Diamond$ in~\eqref{EqWhichMechanism} is special; for it expands via~\eqref{EqSol} over a very low dimensional affine subspace in the affine space of unknowns in that inhomogeneous linear algebraic system.
\end{rem}

\begin{property}
The relevant 
Leibniz graphs, with respect to which the solution $\Diamond(\cP,\,\cdot\,)$ expands,
do not contain tadpoles nor two-cycles (or ``eyes'', see Fig.~\ref{FigTadpoleEye} 
on p.~\pageref{FigTadpoleEye}).

\noindent%
$\bullet$\quad None of the arrows that act back on the Jacobiator is issued 
from any of its 
arguments.

\noindent%
$\bullet$\quad In all the graphs the source vertices (if any), on which no arrows fall after all the Leibniz rules are expanded,
belong to the Jacobiator (cf.~\eqref{EqJacFig} on p.~\pageref{EqJacFig}).
\end{property}


\begin{property}
The found solution $\Diamond$ does contain the graphs in which two or three arrows fall on the Jacobiator.\footnote{For instance, the first term in $\Diamond$ is the tripod standing on $\Jac(\cP)$.}%
\end{property}

It has been explained in~\cite{gvbv,dq15} that the existence of two or more such arrows falling on the equation
$\lshad\cP,\cP\rshad=0$ is an obstruction to an extension of the main claim,
\begin{equation}
\lshad\cP,\cQ_{1:6}(\cP)\rshad\doteq 0\quad \text{via} \ \lshad\cP,\cP\rshad=0,
\tag{\ref{EqWhichMechanism}}
\end{equation}	
to the infinite\/-\/dimensional geometry of jet spaces $J^{\infty}(\pi)$ for affine bundles over a manifold~$M^m$ or jet spaces 
$J^{\infty}(M^m\to N^n)$ of maps from~$M^m$, and of variational Poisson brackets $\{\,,\,\}_{\bcP}$ for functionals on such
jet spaces (see~\cite{Olver,TwelveLect} and~\cite{cycle14,dq15}). Namely, it can then be that
\begin{equation}\label{EqWhichMechanismVar}
\lshad\bcP,\bcQ_{1:6}(\bcP)\rshad\ncong0\quad\text{although }\ \lshad\bcP,\bcP\rshad\cong0.
\end{equation}
We denote here by $\lshad\,,\,\rshad$ the variational Schouten bracket; the variational bi\/-\/vector~$\bcQ_{1:6}$ is 
constructed from the variational Poisson bi\/-\/vector~$\bcP$ by using techniques from the geometry of iterated variations
of functionals (see~\cite{gvbv,cycle14,dq15}). An explicit counterexample of~\eqref{EqWhichMechanismVar} is known from
\cite{tetra16} for the variational Poisson structure of the Harry Dym partial differential equation. 

The reason why the obstruction arises is that in the variational setting, the second and higher order variations of a 
trivial integral functional $\Jac(\bcP)\cong0$ in the horizontal cohomology can still be nonzero (although its first variation would of course vanish).%
\footnote{The same effect has been foreseen for a variational lift of deformation quantisation~\cite{KontsevichFormality}:
it has been argued in~\cite{dq15} why the associativity of noncommutative 
star\/-\/product
$\star=\times+\hbar\{\,\cdot\,,\,\cdot\,\}_{\bcP}+\bar{o}(\hbar)$
can leak and it has been shown in~\cite{sqs15} that if it actually does at~$O(\hbar^k)$, the order~$k$ at which this leak
of associativity can occur is high:~$k\geqslant4$.} 

\begin{rem}
The eight graphs in \eqref{EqSol}
represent a \emph{linear} differential operator with respect to the Jacobiator $\Jac(\cP)$. However, a quadratic 
nonlinearity with respect to the two\/-\/vertex argument $\Jac(\cP)$ could 
be hidden in the five\/-\/vertex graphs in formula~\eqref{EqSol}, so that it would in fact encode a bi\/-\/differential operator~$\Diamond(\cP,\,{\cdot}\,,\,{\cdot}\,)$.
If this be the case, expansion of one or the other copy of the Jacobiator 
using~\eqref{EqJacFig} in such a polydifferential operator $\Diamond(\cP,\,{\cdot}\,,\,{\cdot}\,)$ would produce two seemingly distinct linear differential operators~$\Diamond(\cP,\,{\cdot}\,)$.
\end{rem}

The scenarios to build the bi\/-\/linear, bi\/-\/differential terms in the 
operator~$\Diamond$ are drawn in Fig.~\ref{FigJacJac} 
below.\label{pCountQuadratic} 
\begin{figure}[htb]
\begin{center}
\unitlength=1mm
\special{em:linewidth 0.4pt}
\linethickness{0.4pt}
\begin{picture}(50.00,30.67)
\put(0.00,30.00){\makebox(0,0)[ct]{``3":}}
\put(10.00,30.00){\circle*{1.33}}
\put(17.00,25.00){\framebox(13.00,5.00)[cc]{$\bullet\ \bullet$}}
\put(37.00,25.00){\framebox(13.00,5.00)[cc]{$\bullet\ \bullet$}}
\bezier{16}(10.00,30.00)(8.00,28.00)(7.00,27.00)
\bezier{16}(10.00,30.00)(12.00,28.00)(13.00,27.00)
\bezier{16}(20.00,25.00)(18.00,23.00)(17.00,22.00)
\bezier{12}(23.67,25.00)(23.67,23.00)(23.67,22.00)
\bezier{16}(27.33,25.00)(29.33,23.00)(30.33,22.00)
\put(39.00,25.00){\vector(-1,-1){10.00}}
\put(43.67,25.00){\vector(-1,-2){5.00}}
\put(48.00,25.00){\vector(0,-1){10.00}}
\put(29.00,10.00){\makebox(0,0)[cb]{\tiny(\ )}}
\put(38.67,10.00){\makebox(0,0)[cb]{\tiny(\ )}}
\put(48.00,10.00){\makebox(0,0)[cb]{\tiny(\ )}}
\end{picture}
\qquad
\unitlength=1mm
\special{em:linewidth 0.4pt}
\linethickness{0.4pt}
\begin{picture}(50.00,30.67)
\put(0.00,30.00){\makebox(0,0)[ct]{``2"(1):}}
\put(10.00,30.00){\circle*{1.33}}
\put(17.00,25.00){\framebox(13.00,5.00)[cc]{$\bullet\ \bullet$}}
\put(37.00,25.00){\framebox(13.00,5.00)[cc]{$\bullet\ \bullet$}}
\bezier{16}(10.00,30.00)(8.00,28.00)(7.00,27.00)
\bezier{16}(10.00,30.00)(12.00,28.00)(13.00,27.00)
\put(20.00,25.00){\vector(0,-1){10.00}}
\bezier{12}(23.67,25.00)(23.67,23.00)(23.67,22.00)
\bezier{16}(27.33,25.00)(29.33,23.00)(30.33,22.00)
\bezier{16}(39.00,25.00)(37.00,23.00)(36.00,22.00)
\put(43.67,25.00){\vector(-1,-2){5.00}}
\put(48.00,25.00){\vector(0,-1){10.00}}
\put(20.00,10.00){\makebox(0,0)[cb]{\tiny(\ )}}
\put(38.67,10.00){\makebox(0,0)[cb]{\tiny(\ )}}
\put(48.00,10.00){\makebox(0,0)[cb]{\tiny(\ )}}
\end{picture}
\qquad
\unitlength=1mm
\special{em:linewidth 0.4pt}
\linethickness{0.4pt}
\begin{picture}(50.00,30.67)
\put(0.00,30.00){\makebox(0,0)[ct]{``2"(2):}}
\put(10.00,30.00){\circle*{1.33}}
\put(17.00,25.00){\framebox(13.00,5.00)[cc]{$\bullet\ \bullet$}}
\put(37.00,25.00){\framebox(13.00,5.00)[cc]{$\bullet\ \bullet$}}
\bezier{100}(10.00,30.00)(5.00,22.00)(10.00,15.00)
\put(10,15.00){\vector(1,-2){0.33}}
\bezier{16}(10.00,30.00)(12.00,28.00)(13.00,27.00)
\bezier{16}(20.00,25.00)(18.00,23.00)(17.00,22.00)
\bezier{12}(23.67,25.00)(23.67,23.00)(23.67,22.00)
\bezier{16}(27.33,25.00)(29.33,23.00)(30.33,22.00)
\put(39.00,25.00){\vector(-1,-1){10.00}}
\put(43.67,25.00){\vector(-1,-2){5.00}}
\bezier{12}(48,25.00)(48,23.00)(48,22.00)
\put(29.00,10.00){\makebox(0,0)[cb]{\tiny(\ )}}
\put(38.67,10.00){\makebox(0,0)[cb]{\tiny(\ )}}
\put(10.00,10.00){\makebox(0,0)[cb]{\tiny(\ )}}
\end{picture}
\qquad
\unitlength=1mm
\special{em:linewidth 0.4pt}
\linethickness{0.4pt}
\begin{picture}(50.00,30.67)
\put(0.00,30.00){\makebox(0,0)[ct]{``1"(1):}}
\put(10.00,30.00){\circle*{1.33}}
\put(17.00,25.00){\framebox(13.00,5.00)[cc]{$\bullet\ \bullet$}}
\put(37.00,25.00){\framebox(13.00,5.00)[cc]{$\bullet\ \bullet$}}
\bezier{100}(10.00,30.00)(5.00,22.00)(10.00,15.00)
\put(10,15.00){\vector(1,-2){0.33}}
\bezier{16}(10.00,30.00)(12.00,28.00)(13.00,27.00)
\bezier{16}(20.00,25.00)(18.00,23.00)(17.00,22.00)
\bezier{12}(23.67,25.00)(23.67,23.00)(23.67,22.00)
\bezier{16}(39.00,25.00)(37.00,23.00)(36.00,22.00)
\bezier{16}(48,25.00)(50,23.00)(51,22.00)
\put(27.33,25.00){\vector(0,-1){10.00}}
\put(43.67,25.00){\vector(0,-1){10.00}}
\put(28.00,10.00){\makebox(0,0)[cb]{\tiny(\ )}}
\put(44,10.00){\makebox(0,0)[cb]{\tiny(\ )}}
\put(10.00,10.00){\makebox(0,0)[cb]{\tiny(\ )}}
\end{picture}
\qquad
\unitlength=1mm
\special{em:linewidth 0.4pt}
\linethickness{0.4pt}
\begin{picture}(50.00,30.67)
\put(0.00,30.00){\makebox(0,0)[ct]{``1"(2):}}
\put(10.00,30.00){\circle*{1.33}}
\put(17.00,25.00){\framebox(13.00,5.00)[cc]{$\bullet\ \bullet$}}
\put(37.00,25.00){\framebox(13.00,5.00)[cc]{$\bullet\ \bullet$}}
\bezier{100}(10.00,30.00)(5.00,22.00)(10.00,15.00)
\bezier{100}(10.00,30.00)(10.00,22.00)(20.00,15.00)
\put(10,15.00){\vector(1,-2){0.33}}
\put(20,15.00){\vector(1,-1){0.33}}
\bezier{16}(20.00,25.00)(18.00,23.00)(17.00,22.00)
\bezier{12}(23.67,25.00)(23.67,23.00)(23.67,22.00)
\bezier{16}(27.33,25.00)(29.33,23.00)(30.33,22.00)
\put(39.00,25.00){\vector(-1,-1){10.00}}
\bezier{12}(43.67,25.00)(43.67,23.00)(43.67,22.00)
\bezier{16}(48,25.00)(50,23.00)(51,22.00)
\put(29.00,10.00){\makebox(0,0)[cb]{\tiny(\ )}}
\put(20,10.00){\makebox(0,0)[cb]{\tiny(\ )}}
\put(10.00,10.00){\makebox(0,0)[cb]{\tiny(\ )}}
\end{picture}
\end{center}
\caption{
The Leibniz graphs by using which a quadratic --\,with respect to the Jacobiator\,--
part~$\Diamond(\cP,\cdot,\cdot)$ of the factorizing operator could be sought for in~\eqref{EqFactor}; such quadratic part (if any) itself is necessarily totally skew\/-\/symmetric with respect to the three sinks.
}\label{FigJacJac}
\end{figure}
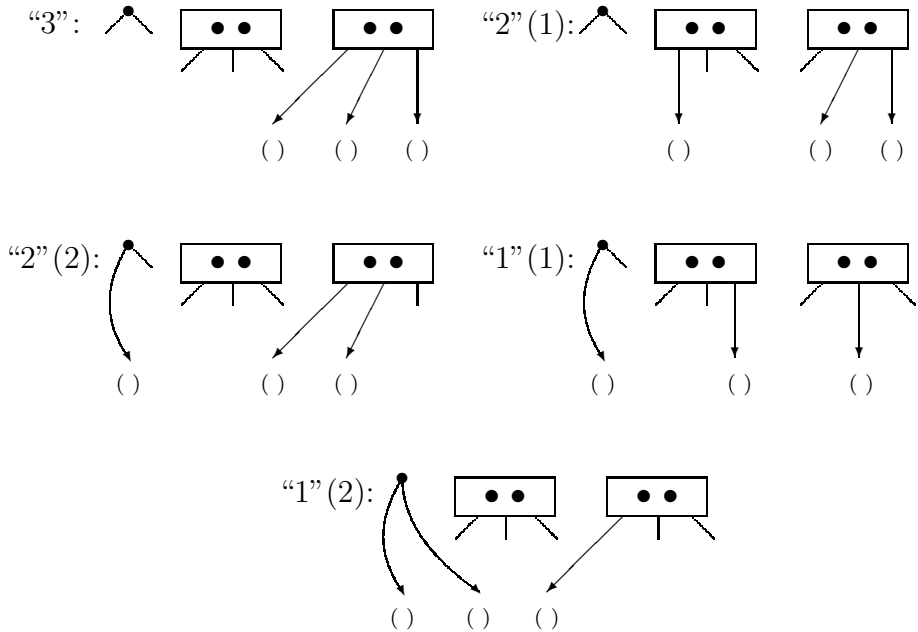
We consider --\,in fact, without any loss of generality\,-- only those eight 
Leibniz graphs in which
\begin{itemize}
\item the three arguments of each copy of Jacobiator~\eqref{EqJacFig} are different; in particular,
\item neither of the Jacobiators acts on the other copy by two or three arrows (so that only none or one such arrow is possible).
\end{itemize}
We recall that known solution~\eqref{EqSol} is the sum of 39~graphs from which a linear dependence on the Jacobiator~$\Jac(\cP)$ is retrieved by using the 27~Leibniz graphs (see Table~\ref{TableSol} on p.~\pageref{TableSol}).
Let us inspect whether any solution of equation~\eqref{EqFactor} 
can be nonlinear in~$\Jac(\cP)$; in particular, let us check whether there is (or is not)
a bi\/-\/linear dependence in~$\Jac(\cP)$ hidden in~\eqref{EqSol}.


%

\begin{proposition}
There is no quadratic part in all the solutions of equation~\eqref{EqFactor}.
\end{proposition}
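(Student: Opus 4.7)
The plan is to replicate the ansatz-and-solve procedure of section~\ref{SecProof}, but with the space of Leibniz graphs enlarged to include the bilinear-in-Jacobiator patterns pictured in Fig.~\ref{FigJacJac}. First I would enumerate all three-sink Leibniz graphs that contain two Jacobiators and one wedge (so $4+1=5$ internal vertices, matching the arity count on the left-hand side of~\eqref{EqFactor}). By the two restrictions stated just before the proposition, each Jacobiator has three distinct arguments and at most one arrow can connect the two Jacobiators to each other; the wedge supplies the remaining internal vertex. The enumeration then breaks into the five topological types ``3'', ``2(1)'', ``2(2)'', ``1(1)'', ``1(2)'' of Fig.~\ref{FigJacJac}, refined by the $L\prec R$ ordering at the wedge and at each Jacobiator position. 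Attach an undetermined coefficient $c_\alpha$ to every such representative.

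Next I would expand each bilinear Leibniz graph by substituting formula~\eqref{EqJacFig} for both Jacobiator boxes, producing $3\times 3=9$ Kontsevich graphs per pattern, with signs inherited from the antisymmetry of $\Jac(\cP)$; the resulting tail is then reduced to normal form using the $L\rightleftarrows R$ antisymmetry at every internal vertex, which in particular eliminates the manifest zero-patterns of Remark~\ref{RemGamma0}. Appending these new terms to the linear ansatz underlying~\eqref{EqSol}, I would enlarge the inhomogeneous linear system over the $7{,}025$ admissible normal-form Kontsevich graphs with $5$ internal vertices on $3$ sinks that appeared in the proof of Theorem~\ref{ThMain}. The proposition is then equivalent to the assertion that, in every solution of this enlarged system, all the new unknowns $c_\alpha$ vanish; equivalently, no bilinear Leibniz combination produces a Kontsevich tail that is not already in the span of the linear ansatz.

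The final step is to delegate the solution of this enlarged linear algebraic system to the software infrastructure used in section~\ref{SecProof} and described in~\cite{cpcOrder4}. Running the same graph-normalization and Gauss-elimination routines (with the preference for minimally supported solutions) over the augmented set of unknowns should return $c_\alpha=0$ for every bilinear generator, together with the previously found operator~$\Diamond$ of~\eqref{EqSol} up to the null space of the linear part. To organise the check, I would first verify it on each of the five topological types separately: the ``3''-type, where both Jacobiators stand on all three sinks, is immediate because the two Jacobiators are bi\/-\/linear polyderivations of order $(1,1,1)$ whose product has differential order $(2,2,2)$ on the sinks and cannot contribute to a $(1,1,1)$ target; the remaining four types reduce the check to a strictly smaller combinatorial system.

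The main obstacle is twofold. First, the enumeration of the bilinear patterns must be exhaustive and free from double\/-\/counting caused by the interchangeability of the two Jacobiator boxes and the $S_3$-\/antisymmetrisation over the sinks; this requires a stable choice of representative under the normal\/-\/form convention of section~\ref{SecProof}. Second, although each bilinear Leibniz graph expands into only nine Kontsevich graphs, the total number of unknowns jumps by more than an order of magnitude, so confidence in the negative conclusion rests entirely on a faithful implementation of the graph normal form and on a complete exploration of the null space of the linear subsystem; the matching order-of-differentiation argument sketched above for the ``3''-type should be used as an a priori sanity filter before committing to the large computation.
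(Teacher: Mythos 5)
Your proposal follows essentially the same route as the paper: an exhaustive, computer-assisted run-through over the ansatz of Leibniz graphs with both linear and quadratic dependence on the Jacobiator (the bilinear patterns of Fig.~\ref{FigJacJac}), reduction of the expanded Kontsevich graphs to normal form, and verification that the resulting linear system admits no solution of~\eqref{EqFactor} with a nonzero quadratic coefficient. The extra details you supply (the enumeration by topological type and the differential-order sanity check) are refinements of, not departures from, the paper's argument.
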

\noindent
This claim is supported by a computer\/-\/assisted run\/-\/through over all Leibniz graphs with linear and with quadratic dependence on the Jacobiator, combined with a requirement that at least one coefficient of those quadratic (in~$\Jac(\cP)$) Leibniz graphs be nonzero. There is no solution.

\section{Open problems}
\subsection{}
   For the factorisation $\lshad\cP,\cQ_{1:6}(\cP)\rshad=\Diamond\bigl(\cP,\Jac(\cP)\bigr)$ to guarantee that the equality
$\boldsymbol{\dd}_{\cP}\bigl(\cQ_{1:6}(\cP)\bigr)=0$ holds if $\Jac(\cP)=0$, its mechanism is nontrivial. Relying on
Lemma~\ref{Lemma} (see~\cite{sqs15}), it tells us how the differential consequences of Jacobi identity are split into
separately vanishing expressions. This mechanism works not only in the construction of flows that
satisfy~\eqref{EqWhichMechanism} but also in the associativity,
\[
\text{Assoc}_{\cP}(f,g,h)\mathrel{{:}{=}}(f\star g)\star h-f\star(g\star h)\doteq 0 \quad \text{via}\ \lshad\cP,\cP\rshad=0,
\]
of the non-commutative unital star\/-\/product $\star=\times+\hbar\{\,\cdot\,,\,\cdot\,\}_{\cP}+o(\hbar)$. The formula
for $\star$-products was given in~\cite{KontsevichFormality}, establishing the deformation quantisation
 $\times\mapsto\star$ of the usual product $\times$ in the algebra $C^{\infty}(N^n)\ni f,g,h$ on a finite-dimentional
affine Poisson manifold $(N^n,\cP)$, see also~\cite{sqs15,dq15}. In fact, the construction of graph complex and the pictorial 
language of graphs~\cite{Ascona96,KontsevichFormality} that encode polydifferential operators is common to all these deformation procedures 
(cf.~\cite{cpcOrder4}, also~\cite{Merkulov}).

\begin{open}
Consider the Kontsevich star-product $\star=\times+\hbar\{\,\cdot\,,\,\cdot\,\}_{\cP}+o(\hbar)$
in the algebra $C^{\infty}(N^n)[[\hbar]]$ on a finite-dimensional affine Poisson manifold $(N^n,\cP)$. Given by the 
tetrahedra $\Gamma_1$ and $\Gamma_2'$ (see Fig.~\ref{FigTetra} on p.~\pageref{FigTetra}),  the infinitesimal deformation
$\cP\mapsto\cP+\veps\cQ_{1:6}(\cP)+o(\veps)$ induces the infinitesimal deformation
$\star\mapsto\star+\hbar\veps\,\lshad\lshad\cQ_{1:6}(\cP),\,\cdot\,\rshad,\,\cdot\,\rshad+o(\veps)$
of the star\/-\/product. What are the properties of this infinitesimally deformed $\star(\veps)$-\/product\,? In particular, is the
condition that $\cQ_{1:6}(\cP)$ be $\boldsymbol{\dd}_{\cP}$-\/trivial necessary for the $\star(\veps)$-product to be gauge-equivalent to the unperturbed $\star$-\/product at~$\veps=0$\,?
\end{open}

We recall that the theory of (infinitesimal) deformations of associative algebra structures is very well studied in the broadest context (e.g., of the Yang\/--\/Baxter equation, Witten\/--\/Dijkgraaf\/--\/Verlinde\/--\/Verlinde (WDVV) equation, Frobenius manifolds and F-\/structures, etc.), see~\cite{Dubrovin,ManinFrobenius}.
We expect that in that theory's part which is specific to the deformation of associative structures on finite\/-\/dimensional affine Poisson manifolds~$N^n$, there must be a dictionary between the construction of Kontsevich flows for spaces of Poisson bi\/-\/vectors and other instruments to deform the associative product in the algebra~$C^\infty(N^n)$.

\subsection{}
The Kontsevich tetrahedral flow $\dot{\cP}=\cQ_{1:6}(\cP)$ is a universal procedure to deform a given Poisson bi\/-\/vector~$\cP$ on any finite\/-\/dimensional affine real manifold~$N^n$ (i.\,e.\ not necessarily topologically trivial).
For consistency, let us recall that generally speaking, not every infinitesimal deformation $\cP \mapsto \cP+\varepsilon \cQ+\bar{o}(\varepsilon)$ of a Poisson bi\/-\/vector $\cP$ can be completed to a Poisson deformation $\cP \mapsto \cP+\cQ(\varepsilon)$ at all orders in $\varepsilon$. 
The obstructions are contained in the third $\boldsymbol{\partial}_{\cP}$-\/cohomology group $\mathrm{H}^{3}_{\cP}=\bigl\{\mathrm{T}\in\Gamma\bigl(\bigwedge^3 TN\bigr)$ $|$ $\boldsymbol{\dd}_{\cP}(\mathrm{T})=0\bigr\}$
$\bigl/$ $\bigl\{\mathrm{T}=\boldsymbol{\partial}_{\cP}(\mathrm{R})$, 
$\mathrm{R}\in\Gamma\bigl(\bigwedge^2 TN\bigr)\bigr\}$.
Indeed, cast the master\/-\/equation $\schouten{\cP+\cQ(\varepsilon),\cP+\cQ(\varepsilon)}=0$ for the Poisson deformation to the coboundary statement $\schouten{\cQ(\varepsilon),\cQ(\varepsilon)}=\boldsymbol{\partial}_{\cP}(-\cP-2\cQ(\varepsilon))$, 
whence $\boldsymbol{\partial}_{\cP}(\schouten{\cQ(\varepsilon),\cQ(\varepsilon)}\equiv 0$ by $\boldsymbol{\partial}_{\cP}^{2}=0$. Therefore, the vanishing of the third $\boldsymbol{\partial}_{\cP}$-\/cohomology group guarantees the existence of a power series solution $\cQ(\varepsilon)$ to the cocycle\/-\/coboundary equation $\schouten{\cQ(\varepsilon),\cQ(\varepsilon)}=-2 \boldsymbol{\partial}_{\cP}(\cQ(\varepsilon))$: known to be a cocycle, the left\/-\/hand side has been proven to be a coboundary as well. 
(In other words, an infinitesimal deformation $\cP\mapsto\cP+\veps\cQ_{1:6}(\cP)+o(\veps)$ 
can be completed to the construction of Poisson bi-vector~$\cP(\veps)$ such that $\cP(\veps=0)=\cP$ and
$\left.\frac{\Id}{\Id\veps}\right|_{\veps=0}\cP(\veps)=\cQ_{1:6}(\cP)$ if 
the third Poisson cohomology group $\mathrm{H}^3_{\cP}(N^n)$ with respect to the Poisson differential
$\boldsymbol{\dd}_{\cP}=\lshad\cP,\,\cdot\,\rshad$ vanishes for the manifold~$N^n$.)

In the symplectic case, i.\,e.\ for $n$~even and bracket $\{\,\cdot\,,\,\cdot\,\}_{\cP}$ nondegenerate, the Poisson complex is known to be
isomorphic to the de Rham complex for $N^n$ (see \cite{VanhaeckePoisson}).
We are not yet aware of any way to constrain the Poisson cohomology groups $\mathrm{H}^k_{\cP}(N^n)$ for \emph{degenerate} Poisson
brackets $\{\,\cdot\,,\,\cdot\,\}_{\cP}$ on real manifolds $N^n$ of not necessarily even dimension $n<\infty$.
(E.g., the algorithm for construction of cubic Poisson brackets on the basis of a class of $R$-\/matrices, which is explained in~\cite{VanhaeckePoisson}, yields a rank\/-\/six bracket on~$N^9\subset\mathbb{R}^9$.) 

\subsection{} 
The second 
Poisson cohomology group $\mathrm{H}^2_{\cP
}(N^n)$ of the manifold~$N^n$, if nonzero, provides room for the
$\boldsymbol{\dd}_{\cP}$-\/nontrivial deformations of $\cP$ using $\cQ_{1:6}(\cP)$ such that $\cQ_{1:6}(\cP)\ne\lshad\cP,\cX\rshad$
for all globally defined 1-vectors $\cX$ on $N^n$. In particular, this implies that there are no $\boldsymbol{\dd}_{\cP}$-nontrivial
tetrahedral graph flows on even-dimensional star-shaped domains equipped with nondegenerate Poisson brackets. 

A possibility for the right\/-\/hand side~$\cQ_{1:6}(\cP)$ of the tetrahedral flow to be $\boldsymbol{\dd}_{\cP}$-\/trivial is thus a global, 
topological effect; it cannot always be seen within a single chart in~$N^n$. Moreover, it is not universal with respect to
the calculus of graphs.

\begin{rem}
Kontsevich notes~\cite{Ascona96} that if $n=2$ so that every bi\/-\/vector~$\cP$ on~$N^2$ is Poisson and every flow $\dot{\cP}=\cQ_{a:b}(\cP)$ preserves this property, the tetrahedron~$\Gamma_1$ (or, equivalently, the velocity~$\cQ_{1:0}(\cP)$) is always $\dd_{\cP}$-\/exact. 
The required $1$-\/vector field~$\cX(\cP)$ in the coboundary statement $\cQ_{1:0}(\cP)=\lshad\cP,\cX\rshad$ can be expressed in terms of the bi\/-\/vector~$\cP$, e.g., by the Leibniz\/-\/rule graph
$\cX=
\raisebox{0pt}[6mm][4mm]{\unitlength=0.4mm
\special{em:linewidth 0.4pt}
\linethickness{0.4pt}
\begin{picture}(17,24)(5,5)
\put(-5,-7){
\begin{picture}(17.00,24.00)
\put(10.00,10.00){\circle*{1}}
\put(17.00,17.0){\circle*{1}}
\put(3.00,17.0){\circle*{1}}
\put(10.00,10.00){\vector(0,-1){7.30}}
\put(17.00,17.00){\vector(-1,0){14.00}}
\put(3.00,17.00){\vector(1,-1){6.67}}
\bezier{30}(3,17)(6.67,13.67)(9.67,10.33)
%
%
\put(17,17){\vector(-1,-1){6.67}}
\bezier{30}(17,17)(13.67,13.67)(10.33,10.33)
\bezier{52}(17.00,17.00)(16.33,23.33)(10.00,24.00)
\bezier{52}(10.00,24.00)(3.67,23.33)(3.00,17.00)
\put(16.8,18.2){\vector(0,-1){1}}
\put(10,17){\oval(18,18)}
\put(10,10){\line(1,0){10}}
\bezier{52}(20,10)(27,10)(21,16)
\put(21,16){\vector(-1,1){0}}
\end{picture}
}\end{picture}}
$\:. (This is a particular, not general solution.) We recall 
that after the dimension~$n$ is fixed (here $n=2$), a given differential polynomial in~$\cP$ can be encoded by the Kontsevich graphs in non\/-\/unique way (cf.~\cite{LE16} for details).%
\end{rem}

\begin{open}
The formalism developed in~\cite{Ascona96} suggests that there are, most likely, infinitely many 
Kontsevich graph flows on the spaces of Poisson bi-vectors on finite-dimensional affine Poisson manifolds. Forming an 
example $\cQ_{1:6}(\cP)$ of such a cocycle in the graph complex, the tetrahedra $\Gamma_1$ and $\Gamma_2'$ in
Fig.~\ref{FigTetra} are built over four internal vertices. What is or are the next --~with respect to the ordering of 
natural numbers~-- Poisson cohomology\/-\/nontrivial Kontsevich graph cocycle(s) built over five or more internal vertices\,?
\end{open}

\subsection{} 
   The tetrahedral flow $\dot{\cP}=\cQ_{1:6}(\cP)$ preserves the space
$\{\cP\in\Gamma(\bigwedge^2 TN^n)$ $|$ $\lshad\cP,\cP\rshad=0\}$ 
of Poisson bi-vectors; this is guaranteed by Theorem~\ref{ThMain} that asserts $\boldsymbol{\dd}_{\cP}(\cQ_{1:6})\doteq 0$
within the (graded-)commutative geometry of finite\/-\/dimensional affine real manifolds~$N^n$.

\begin{open}
Does the proven property,
\begin{equation}
\lshad\cP,\cQ_{1:6}(\cP)\rshad\doteq 0 \quad \text{via} \ \lshad\cP,\cP\rshad=0,
\tag{\ref{EqWhichMechanism}}
\end{equation}
generalize to the formal noncommutative symplectic supergeometry~\cite{KontsevichCyclic}, to the calculus of multivectors performed by using their necklace brackets (see~\cite{cycle14} and references therein), and to Poisson structures on the
commutative non-associative unital algebras of cyclic words (e.\,g., see~\cite{OlverSokolov1998})\,?
\end{open}

\end{document}